\let\epsilon\varepsilon
\let\phi\varphi
\let\theta\vartheta
\newcommand{\tC}{{\widetilde{C}}}
\newcommand{\hv}{\hat{v}}
\newcommand{\hrho}{{\widetilde{\rho}}}
\newcommand{\ts}{{\tilde{s}}}
\newcommand{\uLambda}{{\underline{\Lambda}}}
\newcommand{\utgamma}{{\underline{\widetilde{\gamma}}}}
\newcommand{\dalpha}{\partial^\alpha}
\newcommand{\ddelta}{\partial^\delta}
\newcommand{\dxi}{\partial^\xi}
\newcommand{\dK}{{\partial K}}
\newcommand{\dbeta}{\partial^\beta}
\newcommand{\hdbeta}{\hat{\partial}^\beta}
\newcommand{\hdalpha}{\hat{\partial}^\alpha}
\newcommand{\betam}{{|\beta|}}
\newcommand{\xim}{{|\xi|}}
\newcommand{\alpham}{{|\alpha|}}
\newcommand{\deltam}{{|\delta|}}
\newcommand{\hr}{\hat{r}}
\newcommand{\tA}{{\widetilde{A}}}
\newcommand{\ugamma}{{\underline{\gamma}}}
\newcommand{\ubeta}{{\underline{\beta}}}
\newcommand{\ueta}{{\underline{\eta}}}
\newcommand{\cK}{\mathcal{K}}
\newcommand{\cP}{\mathcal{P}}
\newcommand{\cS}{\mathcal{S}}
\newcommand{\bcK}{\bm{\mathcal{K}}}
\newcommand{\bcU}{\bm{\mathcal{U}}}
\newcommand{\bu}{\bm{u}}
\newcommand{\bU}{\bm{U}}
\newcommand{\bw}{\bm{w}}
\newcommand{\bbf}{\bm{f}}
\newcommand{\bzero}{{\bm{0}}}
\newcommand{\chihat}{\widehat{\chi}}
\renewcommand{\Re}{\operatorname{Re}}
\author{Carlo Marcati\thanks{Seminar of Applied Mathematics, ETH Zurich, Zürich,
    Switzerland
  (\email{carlo.marcati@sam.math.ethz.ch}, \email{christoph.schwab@sam.math.ethz.ch}).}
\and Christoph Schwab\footnotemark[1]}
\title{Analytic regularity for the incompressible Navier-Stokes equations in polygons}
\begin{document}
\maketitle
\begin{abstract}
In a plane polygon $P$ with straight sides,
we prove analytic regularity of the Leray-Hopf solution of the 
stationary, viscous, and incompressible Navier-Stokes equations.
We assume small data, analytic volume force and no-slip boundary conditions.
Analytic regularity is quantified in so-called countably normed,
corner-weighted spaces with homogeneous norms. 
Implications of this analytic regularity
include exponential smallness of Kolmogorov $N$-widths of solutions,
exponential convergence rates of mixed $hp$-discontinuous Galerkin finite element
and spectral element discretizations and of model order reduction techniques.
\end{abstract}
\begin{keywords}
Navier-Stokes equations, analytic regularity, conical singularities, weighted Sobolev spaces.
\end{keywords}
\begin{AMS}
  35Q30, 76N10, 35A20
\end{AMS}
\section*{Introduction}
The mathematical theory of regularity of solutions to 
the viscous, incompressible 
Navier-Stokes equations has attracted enormous attention during the
past decades. 
Besides elucidating subtle scientific issues on their validity
in particular physical settings, and properties of their
solutions for small viscosity, mathematical regularity results
on their solutions have immediate consequences in a wide range 
of applications, from aerodynamics to physiological flows as, e.g.,
in hemodynamics.
 
The precise mathematical characterization of solution regularity 
in function spaces of Sobolev or Besov type is moreover essential
in the numerical analysis of approximation schemes for the 
efficient numerical solution of these equations. 
We mention only finite element, finite volume, 
$hp$- and spectral element methods and the more recent
model order reduction (MOR) and 
reduced basis (RB) methods \cite{Hesthaven2016,Quarteroni2016}.

In a plane polygonal domain $P$,
we consider the stationary Stokes and Navier-Stokes
equations modeling viscous, incompressible flow, 
i.e. 
\begin{equation}
  \label{eq:NSEIntr}
\begin{split}
  -\nu \Delta \bu +\left( \bu\cdot\nabla \right)\bu+ \nabla p  &= \bbf \text{ in }P,
  \\
  \nabla \cdot \bu &= 0  \text{ in }P,
  \\
  \bu &= \bzero \text{ on }\partial P.
\end{split}
\end{equation}
Here, the 
\emph{source term $\bbf$ is assumed analytic in weighted Sobolev spaces in $P$},
and, together with the viscosity $\nu$, it is supposed to satisfy a 
smallness assumption in $L^2(P)$ which ensures uniqueness of weak (Leray-Hopf) solutions.
We establish analytic estimates on the Leray-Hopf solution of
\eqref{eq:NSEIntr} in weighted Sobolev spaces in $P$.

For a classical analysis of the Stokes and Navier-Stokes problems,
see \cite{Ladyzhenskaya1969, Girault1986a, Temam1995}; 
the analyticity of the solutions in smooth domains
-- given sufficiently regular right hand sides -- is a classical result
\cite{Masuda1967a, Giga1983}. 
In our setting, though, special care
has to be taken to account for the corners of the domain.
Due to the corner singularities of $P$, 
solutions of the Navier-Stokes equations do not admit high order regularity in scales of classical Sobolev spaces.
Regularity results on solutions of elliptic PDEs in polygonal domains $P$ 
must account for the presence of corner singularities at the vertices of $P$.
Accordingly, 
scales of \emph{corner-weighted Sobolev spaces} are required
which were introduced in the seminal paper \cite{Kondratev1967} and are 
known today as Kondrat'ev spaces.
They extend classical Sobolev spaces by weighting weak derivatives 
with powers of the distance to (sets of) isolated singular points.
In the presently considered setting of a polygon $P$ 
and of constant kinematic viscosity $\nu$,
the set of singular points comprises the corners of $P$. 
Since \cite{Kondratev1967}, 
Kondrat'ev spaces (with their extension to three dimensional polyhedra $P$
where one considers also the distance from edges) have been widely used
for the regularity analysis of elliptic problems in polygons and polyhedra. 
Among the vast literature on the regularity of solutions of elliptic boundary value
 problems, we mention \cite{Grisvard1985, Kozlov1997,
  Nicaise1997, Kozlov2001, Costabel2005, Mazya2010, Costabel2010b,
  Costabel2010a, Costabel2012a, Costabel2014} for general elliptic problems 
  and in particular \cite{Guo2006a} for analytic regularity results for 
  the Stokes equation in a polygon as also considered in the present paper.
The regularity of solutions of the incompressible 
Stokes and Navier-Stokes equations is also
investigated from the point of view of Kondrat'ev spaces of finite order 
in the monographs \cite{Grisvard1985, Kozlov2001, Mazya2010}.

In the present paper, 
we are in particular interested in \emph{weighted analytic regularity}.
I.e., in a-priori estimates of norms of the solution in 
function spaces of Kondrat'ev type of arbitrary high order in $P$, 
with quantitative control of the growth of constants 
with respect to the order of differentiation. 
We refer to 
\eqref{eq:Kanalytic} below for the definition of the space of functions 
with weighted analytic regularity in a plane sector. 
The notion of \emph{weighted analytic regularity}, 
as developed in \cite{BDC85,Guo1986a,Guo1986b}
fits well with the classical concept of analytic regularity 
for elliptic systems in the interior and up to analytic boundary 
as developed in \cite{MorrNiren57}.
It is also of great importance for the numerical analysis of these problems, since it is the
basis for the derivation of, among others, exponentially convergent, so-called $hp$-finite
element and spectral element algorithms, as in \cite{Guo1986a, Guo1986b, Schotzau2013a, Schotzau2013b}, 
and tensor compression methods, see \cite{Kazeev2018}.

Among the articles already cited, we refer the reader to \cite{Costabel2012a, Costabel2014} 
for a reference on weighted analytic regularity in polyhedral domains 
for linear elliptic boundary value problems. 
The steady-state, incompressible viscous Navier-Stokes equations constitute 
a nonlinear elliptic system with analytic (quadratic) nonlinearity seems to preclude 
generalizing the proof of analytic regularity shifts in scales of corner-weighted, 
Hilbertian Sobolev spaces.
To handle the quadratic nonlinearity, in the present paper 
we propose to work, instead, in a non Hilbertian setting.
Specifically, we consider $L^s$-type norms, for $s > 2$, 
as is done in \cite{DallAcqua2012} for non weighted spaces and for a
different problem, and bootstrap weighted regularity by induction with respect to the
differentiation order, as opposed to arguments by local analytic continuation 
and complex analysis as put forward e.g.~in \cite{Masuda1967a,Babuska1988,MorrNiren57}.
This requires the introduction of
some preliminary regularity results, localised around each corner.
We start therefore, in
Section \ref{sec:sector}, by considering the case of an unbounded plane sector.
First, we obtain a regularity shift result in concentric
balls around the corner for the Stokes equation; then, when considering the
Navier-Stokes equation, we can move the nonlinear term to the right hand side
and show a regularity shift estimate for the solutions to the full nonlinear
equation. 
The choice of $L^s$-type spaces with $s > 2$ 
deviates from earlier, ``Hilbertian'' arguments as developed e.g. 
    in  \cite{Babuska1988,Guo2006a}. 
    It is
essential in our treatment of the quadratic nonlinearity in
the proof of the analytic regularity shift -- 
the key result Lemma \ref{lemma:induction-step} can not be established
    with our strategy when $s\leq 2$, see also Remark \ref{remark:s>2}.
This weighted a-priori estimate in Lemma \ref{lemma:induction-step} 
serves as local induction step in the proof of analytic regularity 
of (Leray-Hopf solutions of) the 
Dirichlet problems for the viscous, incompressible
Navier-Stokes equations in polygons.
The proof of analytic regularity of the corresponding velocity and
pressure fields is given in Section \ref{sec:polygon}.
There, the shift estimates for the localized problems at 
each corner of the polygon are combined with classical interior 
regularity results to obtain analytic estimates in the whole polygonal domain.
\subsection*{Main result}
Theorem \ref{th:analytic-P} is the main result of this paper: we state it
here for the convenience of the reader. We assume that $P$ is a polygonal domain
with straight sides and with $n\in \mathbb{N}$ vertices $c_i$ and internal angles $\phi_i$, $i=1, \dots, n$.
We write $r_i = |x-c_i|$, $\ugamma = \{\gamma_i\}_{i=1,\dots, n}$ and
\begin{equation*}
  r^\ugamma = \prod_{i=1, \dots, n} r_i^{\gamma_i}.
\end{equation*}
We denote by $\mathfrak{A}^i(\lambda)$ the Mellin symbol of the Stokes operator associated to
the corner $c_i$, i.e., the operator $\mathfrak{L}(\theta, \partial_\theta;
\lambda)$ in $(0, \phi_i)$, see \eqref{eq:L-symbol} below, with boundary operator 
$\mathfrak{B}(\theta, \partial_\theta; \lambda)_{|_{\theta= 0, \phi_i}}$ 
as introduced below in Section \ref{sec:stokes}. 
Let $\Lambda^i_{\min{}}$ be the real part of the eigenvalue with smallest
positive real part of $\mathfrak{A}^i(\lambda)$, and 
define $\uLambda~=~\{ \Lambda^i_{\min{}}\}_{i=1}^{n} >0$.
We remark that for each $i=1, \dots, n$, the value of $\Lambda^i_{\min{}}$ only depends on the
corresponding internal angle $\phi_i$ of the polygon $P$.

The main result of this paper is then given by the following statement;  note that
the inequality on $\ugamma$ in the hypothesis holds componentwise.
\begin{theorem}
  \label{th:analytic-P-intro}
  Let $2 < s<\infty$, $\ugamma \in \mathbb{R}^n$ such that
  $\max(-\uLambda , -1)< \ugamma-2/s < \min(\uLambda, 1)$. Let
  $(\bu, p)$ be Leray-Hopf solutions to the stationary, viscous, and incompressible 
Navier-Stokes equation in $P$ \eqref{eq:NSEIntr} 
with source term $\bbf\in \bm{H}^{-1}(P)$ such that the solution $(\bu, p)$ is unique and
that there
exist $C_f, A_f>0$ such that
  \begin{equation*}
   \| r^{\alpham + 2 - \ugamma} \dalpha \bbf \|_{L^s(P)}  \leq C_f A_f^{\alpham}\alpham! \qquad \forall\, \alpha\in \mathbb{N}^2_0.
  \end{equation*}
  Then, 
there exist two positive constants $C$ and $A$ 
such that for all $ \alpha = (\alpha_1,\alpha_2)\in \mathbb{N}_0^2:$ 
\begin{equation*}
  \| r^{\alpham - \ugamma} \dalpha \bu \|_{L^s(P)} \leq C A^\alpham \alpham !, \qquad
  \| r^{\alpham - \ugamma} \dalpha p \|_{L^s(P)} \leq C A^\alpham \alpham ! .
\end{equation*}
\end{theorem}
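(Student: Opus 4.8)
The plan is to establish the two weighted analytic bounds simultaneously by induction on the differentiation order $N=\alpham$, patching together the localized corner estimate of Section \ref{sec:sector} with classical interior and analytic-up-to-the-boundary regularity, via a fixed finite covering of $\overline{P}$. I would first fix neighborhoods $U_i$ of the corners $c_i$, small enough that $\overline{U_i}$ meets no other corner, together with finitely many open sets $V_j$ whose closures are disjoint from every corner and which, with the $U_i$, cover $\overline{P}$; on each $U_i$ only the factor $r_i$ of $r^\ugamma$ is nonsmooth, the remaining factors being bounded above and below, so $r^{\alpham-\ugamma}$ is comparable on $U_i$ to the single-corner weight $r_i^{\alpham-\gamma_i}$ and, the sides of $P$ being straight, $U_i\cap P$ is a truncated plane sector after a rigid motion. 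The base case, a finite-order weighted bound $\|r^{\alpham-\ugamma}\dalpha\bu\|_{L^s(P)}+\|r^{\alpham-\ugamma}\dalpha p\|_{L^s(P)}\le C_0$ for $\alpham\le N_0$, follows from the uniqueness assumption together with the $H^1_0(P)$ bound on the Leray-Hopf solution, the hypothesis on $\bbf$, and standard finite-order weighted (Kondrat'ev) regularity near each corner; the range condition $\max(-\uLambda,-1)<\ugamma-2/s<\min(\uLambda,1)$ enters here, guaranteeing that the weight lines carrying the Mellin transforms are free of eigenvalues of the symbols $\mathfrak{A}^i(\lambda)$.

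For the induction step, assume $\|r^{\alpham-\ugamma}\dalpha\bu\|_{L^s(P)}+\|r^{\alpham-\ugamma}\dalpha p\|_{L^s(P)}\le CA^{\alpham}\alpham!$ for all $\alpham\le N-1$. On each $V_j$, where the data and hence the solution are analytic up to the smooth part of $\partial P$ in the classical sense, the estimates of \cite{MorrNiren57} for the Stokes system (equivalently the Navier-Stokes system, whose nonlinearity is analytic) bound the order-$N$ derivatives of $(\bu,p)$ by lower-order derivatives of $(\bu,p)$ and by derivatives of $\bbf$, with $N$-independent constants; since $r^{\alpham-\ugamma}$ is comparable to $1$ on $\overline{V_j}$, this yields the weighted bound on $V_j$ once the induction hypothesis and the hypothesis on $\bbf$ are inserted.

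On each $U_i$ I would regard $(\bu,p)$ as the solution of a Stokes problem with right-hand side $\bbf-(\bu\cdot\nabla)\bu$ and apply Lemma \ref{lemma:induction-step}: this controls $\|r_i^{N-\gamma_i}\dalpha\bu\|_{L^s}+\|r_i^{N-\gamma_i}\dalpha p\|_{L^s}$ on an inner ball around $c_i$ in terms of (i) the corresponding weighted norm of $\bbf$, at most $C_fA_f^NN!$; (ii) the corresponding weighted norm of the quadratic term $(\bu\cdot\nabla)\bu$; and (iii) lower-order weighted norms of $(\bu,p)$ on a slightly larger concentric ball, bounded by the induction hypothesis. For (ii), expanding $\dalpha[(\bu\cdot\nabla)\bu]$ by the Leibniz rule and using that for $s>2$ in two space dimensions the weighted factors obey a multiplicative inequality of the form $\|fg\|_{L^s}\lesssim\|f\|_{L^s}(\|g\|_{L^s}+\|\nabla g\|_{L^s})$ — precisely the estimate that fails for $s\le 2$, cf.\ Remark \ref{remark:s>2} — the induction hypothesis turns the binomial sum $\sum_{\beta\le\alpha}\binom{\alpha}{\beta}(\cdots)$ into $C^2A^NN!$ times a factor that can be made arbitrarily small, uniformly in $N$, by enlarging $A$. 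Absorbing that small fraction of the order-$N$ term into the left side and choosing $A$ large relative to $A_f$, to the constants of Lemma \ref{lemma:induction-step} and of the interior estimates, and to $C_f/C$, closes the estimate on each $U_i$. Patching the inner-ball bounds on the $U_i$ with the bounds on the $V_j$ by a partition of unity subordinate to the covering — the commutators contribute only lower-order derivatives of $(\bu,p)$, again controlled by the induction hypothesis — produces the order-$N$ bound on all of $P$, completing the induction; the pressure estimate is obtained along the way, from Lemma \ref{lemma:induction-step} on the $U_i$ and from $\nabla p=\bbf+\nu\Delta\bu-(\bu\cdot\nabla)\bu$ on the $V_j$.

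The main obstacle is the constant bookkeeping in this induction: one must choose a single $A$ so that each feedback contribution — lower-order solution norms on overlapping balls, the nonlinear binomial sum, and the partition-of-unity commutator terms — is dominated by a fixed small fraction of $CA^NN!$, leaving the remaining budget for the data term $C_fA_f^NN!$. This succeeds only because the Leibniz-rule constants collapse to exactly $N!$ rather than $(N!)^2$, which in turn rests on the precise tuning of the weight $r^{\alpham-\ugamma}$: each derivative falling on the weight costs one power of $r^{-1}$, and the resulting combinatorial factors telescope against the factorials in the induction hypothesis. Verifying this telescoping uniformly over the finitely many corners and over all multi-indices $\alpha$ is the delicate point; the rest is assembling the localized shift estimates already established in Section \ref{sec:sector}.
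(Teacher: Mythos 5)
Your architecture matches the paper's proof of Theorem \ref{th:analytic-P}: induction on the derivative order, the localized weighted Stokes shift of Section \ref{sec:sector} at each corner with $(\bu\cdot\nabla)\bu$ moved to the right-hand side, classical interior/boundary analyticity on the part of $P$ away from the corners, and a finite-order weighted estimate (Lemma \ref{lemma:base-reg}) as base case. The genuine gap is in the nonlinear term. The product inequality you invoke, $\|fg\|_{L^s}\lesssim\|f\|_{L^s}(\|g\|_{L^s}+\|\nabla g\|_{L^s})$, is just $W^{1,s}\hookrightarrow L^\infty$ applied to one factor and charges a \emph{full} extra derivative to it. The resulting bound on $|(\bu\cdot\nabla)\bu|_{\bcK^{k-1,s}_{\gamma-2}}$ then carries the powers $A_{\bu}^{k+1}(k\rho)^{-k-1}$ — the same as the target for $|\bu|_{\bcK^{k+1,s}_{\gamma}}$ — with a prefactor proportional to $C_{\bu}^{2}$ (the term is quadratic in the solution) against a target proportional to $C_{\bu}$; the quotient is of order $C_{\bu}$ times the shift constant, independent of $A$, so it cannot be made small by enlarging $A$ and the induction does not close. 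The paper's mechanism is the \emph{fractional} weighted interpolation of Lemma \ref{lemma:thetaprod} with exponents $\theta=1/s$ and $\eta=2/s$ (Lemmas \ref{lemma:nonlin1}--\ref{lemma:nonlininfty}): only $2/s<1$ extra derivatives are charged in total, the nonlinear term scales as $A_{\bu}^{k+2/s}(k\rho)^{-k-2/s}$, and the quotient acquires the factor $A_{\bu}^{2/s-1}$, small for large $A_{\bu}$ exactly when $s>2$ (see \eqref{eq:pre-end-inductive-step}--\eqref{eq:end-inductive-step} and Remark \ref{remark:s>2}). Your claim that the binomial sum yields ``a factor that can be made arbitrarily small by enlarging $A$'' is true only with this fractional gain, which your stated inequality does not provide.

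A second, structural issue: the flat global induction hypothesis $\|r^{\alpham-\ugamma}\dalpha\bu\|_{L^s(P)}\le CA^{\alpham}\alpham!$ for $\alpham\le N-1$ does not interface with the corner estimates. Proposition \ref{th:stokes-estimate-bis} controls order-$(\ell+2)$ seminorms on $S_{R-(j+1)\rho}$ by lower-order seminorms on the strictly larger sector $S_{R-j\rho}$ with factors $\rho^{-1}$, $\rho^{-2}$, and keeping the limiting sector nondegenerate forces $\rho\le R/(2N)$, i.e.\ $\rho^{-1}\gtrsim N$ at order $N$; a hypothesis that does not record the dependence on the nested radii cannot absorb these losses with an $N$-independent constant. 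This is precisely what the hypotheses $H_{\bu,p}$ of \eqref{eq:induction}, with bounds $C_{\bu}A_{\bu}^{\ell}(j\rho)^{-\ell}\ell^{\ell}$ on the family $S_{R-j\rho}$ for all $\ell\le j$, are designed for; Lemma \ref{lemma:induction-step}, which you cite as your corner workhorse, presupposes them and uses the rescaling $\hrho=(k+1)\rho/k$ to pass from level $k$ to level $k+1$. Adopting that nested formulation (and dropping the global partition of unity, which the paper replaces by simply summing seminorms over $P_0$ and the $S^i_{R_P/2}$) would bring your argument in line with the paper's.
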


\subsection*{Extensions and consequences}
The analysis presented here is done in the framework of homogeneous Dirichlet boundary
conditions, and it holds without modification as long as 
homogeneous Dirichlet conditions for the velocity field
are imposed on at least one of the edges abutting on every corner.
If one wants to deal with boundary conditions where the value of the 
velocity field is not prescribed at the corner -- as, for example, for 
Neumann-type boundary conditions -- the
homogeneous weighted spaces have to be replaced by their non homogeneous
version, as in the analytic regularity shift for the Stokes system, for example, in \cite{Guo2006}. 
Non homogeneous spaces have a different weight structure than homogeneous ones, and they
contain functions with non trivial Taylor expansion at the corners. For an overview of the
relationship between homogeneous and non homogeneous spaces for isolated
point singularities, see \cite{Kozlov1997, Costabel2010a}.
The analysis presented here can also constitute part of the basis for the proof of
weighted analytic regularity of the solutions to the Navier-Stokes equation in
three dimensional polyhedra. The spaces involved in this case are more
complicated due to the presence of edge singularities, but most of the
regularity analysis in weighted spaces has been done in \cite{Mazya2010}. See
also \cite{Ebmeyer2001} for an analysis of regularity in classical Sobolev spaces.
Furthermore, the analysis in Kondrat'ev spaces can be
instrumental in other types of regularity analyses, for example using weighted
spaces to determine the regularity in Besov spaces of the solutions to the
Navier-Stokes equation, see, e.g., \cite{Eckhardt2018}. 
The analytic regularity shown here allows to establish 
exponential convergence of several classes of numerical schemes, 
such as the above-mentioned $hp$-finite element methods \cite{Schotzau2020},
tensor compression methods \cite{CSMSuri99,NSVEM18,CckbGKanDoMiScho05}
and Model Order Reduction methods \cite{Veroy2005, Lassila2014, Fick2018}
for the numerical approximation of the viscous, incompressible Navier-Stokes equations.
\section{The Stokes equation in an infinite sector}
\label{sec:sector}
In this section we consider the Stokes and Navier-Stokes equations 
in an unbounded, plane sector $K$.
Starting from an elliptic regularity shift theorem in the plane sector, we
derive, in Proposition \ref{th:stokes-estimate}, a localised regularity shift
theorem in concentric balls around the corner. 
The constants in the estimate are explicit with respect to the distance between the balls, 
as the careful tracking of the constants will be important for the analysis of the
Navier-Stokes equation.
\subsection{Notation and setting}
\label{sec:NotatSett}
Let $r$ and $\theta$ be the polar coordinates of a point $x\in \mathbb{R}^2$,
and, for an aperture angle $\phi\in (0, 2\pi)\setminus\{\pi\}$, we 
consider the plane sector
\begin{equation} \label{eq:cone}
K = \left\{ x \in \mathbb{R}^2: 
x = r(\sin(\theta), \cos(\theta))^\top, 0< r<\infty, \theta \in (0, \phi) \right\}.
\end{equation}
We exclude the case of a domain with a slit, i.e., $\phi = 2\pi$,
in order to render $K$ Lipschitz in a neighborhood of the origin.
For integer $\ell\geq 0$, $\gamma\in \mathbb{R}$, and for $1<s<\infty$, 
the homogeneous, weighted Sobolev spaces $\cK^{\ell,s}_\gamma(K)$ are
\begin{equation*}
 \cK^{\ell,s}_\gamma(K) = \left\{v\in L^s_{\mathrm{loc}}(K): r^{\alpham-\gamma}\dalpha u\in L^s(Q), \;\forall\alpha,\alpham\leq \ell\right\}
  \end{equation*}
  with norm

\begin{equation*}
\| w \|_{\cK^{\ell,s}_\gamma(K)} 
= 
\left( \sum_{\alpham\leq \ell} \|r^{\alpham -\gamma}\dalpha w\|^s_{L^s(K)} \right)^{1/s}.
\end{equation*}
We will also use the seminorm
  
\begin{equation*}
| w |_{\cK^{\ell,s}_\gamma(K)} 
= 
 \sum_{\alpham= \ell} \|r^{\alpham -\gamma}\dalpha w\|_{L^s(K)} .
\end{equation*}
For $\ell \geq 1$,
the spaces $\cK^{\ell-1/s,s}_{\gamma-1/s}(\dK\setminus\{0\})$ are defined as
spaces of traces of functions in $\cK^{\ell,s}_\gamma(K)$.
We denote 
\begin{equation*}
\cK^{\infty,s}_\gamma(K) = \bigcap_{\ell\in\mathbb{N}} \cK^{\ell,s}_\gamma(K)
\end{equation*}
and use boldface characters 
for two dimensional vectors of functions and for their spaces.
For constants $A,C>0$, we write
\begin{equation}
  \label{eq:Kanalytic}
  \cK^{\varpi,s}_\gamma(K; A,C) 
   = 
\left\{ v\in \cK^{\infty,s}_\gamma(K): 
       \left\| v\right\|_{\cK^{\ell,s}_\gamma(K)}\leq CA^\ell\ell!, \, 
                         \text{for all }\ell\in\mathbb{N} \right\}.
\end{equation}
With these sets we associate
\begin{equation}
  \label{eq:Kanalytic'}
  \cK^{\varpi,s}_\gamma(K)
   = \bigcup_{A,C>0} \cK^{\varpi,s}_\gamma(K; A,C) \;,
\end{equation}
i.e., 
\begin{equation*}
  \cK^{\varpi,s}_\gamma(K) =
\left\{ v\in \cK^{\infty,s}_\gamma(K): {\rm ex.}\, A,C>0\; \mbox{s.t.}\;\forall \ell\in \mathbb{N}:\;\;
       \left\| v\right\|_{\cK^{\ell,s}_\gamma(K)}\leq CA^\ell\ell!
                         \right\}.
\end{equation*}
Furthermore, 
given two functions or operators $F$ and $G$, 
$\left[ \cdot,\cdot \right]$ denotes the commutator
\begin{equation*} \left[ F, G \right] = FG-GF.  \end{equation*}
We write $\mathbb{N}_0 = \{0\}\cup \mathbb{N}$, 
where $\mathbb{N}$ is the set of the natural numbers.
For two multi indices 
$\alpha = (\alpha_1, \alpha_2)\in \mathbb{N}_0^2$ and 
$\beta = (\beta_1, \beta_2)\in\mathbb{N}_0^2$, 
we write 
$\alpha! =\alpha_1! \alpha_2!$, $\alpha + \beta = (\alpha_1+\beta_1,  \alpha_2+ \beta_2)$, 
and
\begin{equation*}
  \binom{\alpha}{\beta} = \frac{\alpha!}{\beta! (\alpha-\beta)!}.
\end{equation*}
Finally, recall e.g. from \cite{Kato1996} 
\begin{equation*}
  \sum_{\substack{\betam = n\\\beta\leq\alpha}} \binom{\alpha}{\beta} = \binom{\alpham}{n}.
\end{equation*}

\subsection{The Stokes equation in a sector}
\label{sec:stokes}
In the infinite sector $K$, 
we consider the Stokes system subject to volume force $\bbf$, with unit viscosity and 
with homogeneous Dirichlet boundary conditions
\begin{equation}
  \label{eq:stokes}
  \begin{aligned}
    -\Delta \bu + \nabla p  &= \bbf \text{ in }K,\\
    \nabla \cdot \bu &= g  \text{ in }K,\\
    \bu &= \bzero \text{ on }\partial K\setminus\{0\},
    \end{aligned}
\end{equation}
which we express symbolically as 
\begin{equation}\label{eq:stokesOp}
\mathcal{L} \left(\begin{array}{c} \bu \\ p \end{array}\right) 
= 
\left(\begin{array}{c} \bbf \\ g \end{array}\right) \;.
\end{equation}
With $(r,\vartheta)\in (0,\infty)\times(0,\varphi)$ we denote polar coordinates
in the sector $K$. 
For $\lambda\in \mathbb{C}$, 
we write
\begin{equation}\label{eq:Uptheta}
\bu = r^\lambda \bU(\vartheta) = r^\lambda\begin{pmatrix} U_r(\theta) \\ U_\theta(\theta)\end{pmatrix}
\quad\text{and}\quad
p=r^{\lambda-1} P(\vartheta),
\end{equation}
where $U_r$ and $U_\theta$ are respectively the radial and angular components of $\bU$.
We introduce the parameter-dependent differential operator 
$\mathfrak{L}(\theta, \partial_\theta; \lambda)$ 
acting on $(\bU,P)$ in \eqref{eq:Uptheta} via
\begin{equation*}
\mathfrak{L}(\theta, \partial_\theta; \lambda)
\begin{pmatrix}
  \bU \\ P 
\end{pmatrix}
=
\begin{pmatrix}
  r^{2-\lambda}&& \\
  & r^{2-\lambda}& \\
  && r^{1-\lambda}
\end{pmatrix}
\mathcal{L} 
\begin{pmatrix}
 r^\lambda \bU(\vartheta) 
\\ 
r^{\lambda-1} P(\vartheta) 
\end{pmatrix}
\end{equation*}
We find (see also \cite[Section 5.1, Eqn. (5.1.4)]{Kozlov2001})
\begin{equation}
  \label{eq:L-symbol}
  \mathfrak{L}(\theta, \partial_\theta; \lambda) = 
    \begin{pmatrix}
    1 -\lambda^2 - \partial_\theta^2 & 2\partial_\theta &  \lambda -1     \\
    -2\partial_\theta & 1-\lambda^2 - \partial_\theta^2 & \partial_\theta \\ 
    \lambda +1 & \partial_\theta & 0
  \end{pmatrix},
\end{equation}
and the corresponding Dirichlet boundary operator
\begin{equation*}
  \mathfrak{B}(\theta, \partial_\theta; \lambda)_{|_{\theta = 0, \phi}} =
  \begin{pmatrix}
    1 & 0 & 0 \\
    0 & 1 & 0
  \end{pmatrix}.
\end{equation*}
We denote by $\mathfrak{A}({\lambda})$ the parametric operator pencil
associated to the problem given by the parametric differential 
operator $\mathfrak{L}(\theta, \partial_\theta; \lambda)$ 
in $\theta \in (0, \phi)$ with Dirichlet
boundary operator $\mathfrak{B}(\theta, \partial_\theta; \lambda)_{|_{\theta = 0, \phi}}$.
We recall from \cite[Theorems 1.2.5 to 1.2.8]{Mazya2010} the following
two results on regularity shifts in corner-weighted spaces of finite order.
\begin{theorem}
\label{th:stokes-homogeneous-reg}
Let 
$\ell \geq 0$ be an integer, $1<s<\infty$, $\gamma \in {\mathbb R}$ 
and let
\begin{gather*}
    \bbf \in \bcK^{\ell, s}_{\gamma-2}(K), \qquad g\in \cK^{\ell+1, s}_{\gamma-1}(K).
  \end{gather*}
Suppose furthermore that
the line $\Re\lambda = \gamma-2/s$ does not contain eigenvalues of
$\mathfrak{A}(\lambda)$.
Then, there exist unique solutions $\bu$, $p$ to \eqref{eq:stokes}, 
with
\begin{equation*}
  \bu \in \bcK^{\ell+2,s}_{\gamma}(K), \qquad p\in \cK^{\ell+1,s}_{\gamma-1}(K)
\end{equation*}
and 
there exists $C>0$ (possibly dependent on $\ell$ but independent of $\bu$ and $p$) 
such that
\begin{equation}
  \label{eq:homog-estimate}
  \| \bu \|_{\bcK^{\ell+2,s}_\gamma(K)} +  \|p\|_{\cK^{\ell+1,s}_{\gamma-1}(K)} 
  \leq C 
  \left\{ \| \bbf \|_{\bcK^{\ell,s}_{\gamma-2}(K)} + \| g\|_{\cK^{\ell+1,s}_{\gamma-1}(K)} \right\}.
\end{equation}
\end{theorem}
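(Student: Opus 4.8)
The plan is to prove Theorem~\ref{th:stokes-homogeneous-reg} by the Mellin transform in the radial variable, the classical Kondrat'ev technique underlying \cite[Ch.~1]{Mazya2010}. First I would write \eqref{eq:stokes} in polar coordinates; since $\mathcal{L}$ is homogeneous under the dilation $r\mapsto tr$ (degree $-2$ on the velocity block, $-1$ on the pressure block), the Mellin transform $\mathcal{M}_{r\to\lambda}$ turns \eqref{eq:stokes} into the parametric two-point boundary value problem on the cross-section $(0,\phi)$ governed by the pencil, namely $\mathfrak{L}(\theta,\partial_\theta;\lambda)(\widehat{\bU},\widehat{P})=(\widehat{\bbf}',\widehat{g}')$ with $\mathfrak{B}(\theta,\partial_\theta;\lambda)(\widehat{\bU})=0$ at $\theta=0,\phi$, the right-hand sides being the Mellin transforms of suitably rescaled data. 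On the vertical line $\Re\lambda=\gamma-2/s$ the weighted $\cK$-norms on $K$ translate, for $s=2$, into an $L^2$-norm of $(\widehat{\bU},\widehat{P})$ by Parseval, and for general $s$ into the norm of a Mellin convolution operator with operator-valued symbol $\mathfrak{A}(\lambda)^{-1}$.

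Next I would record the two structural facts about the pencil. For each fixed $\lambda$, $\mathfrak{A}(\lambda)$ is an elliptic two-point ODE system on $(0,\phi)$ with Dirichlet data, hence Fredholm of index zero, and its inverse is meromorphic in $\lambda$ with poles precisely at the eigenvalues. Moreover the pencil is elliptic \emph{with parameter}, so $\|\mathfrak{A}(\lambda)^{-1}\|$ is uniformly bounded, in the natural parameter-dependent cross-sectional norms and for all orders up to $\ell$, when $\Re\lambda$ stays in a fixed bounded interval and $|\Im\lambda|$ is large. Combined with the hypothesis that $\Re\lambda=\gamma-2/s$ contains no eigenvalue — which, the set of eigenvalues being discrete, yields an eigenvalue-free open strip around that line — this makes $\mathfrak{A}(\lambda)^{-1}$ holomorphic and uniformly bounded on the whole critical line. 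For $s=2$, \eqref{eq:homog-estimate} then follows immediately from Parseval: set $(\widehat{\bU},\widehat{P})=\mathfrak{A}(\lambda)^{-1}(\cdot)$ and use the uniform bound. Existence and uniqueness come with it, since the solution is $(\bu,p)=\mathcal{M}^{-1}\bigl(\mathfrak{A}(\lambda)^{-1}\mathcal{M}_{r\to\lambda}(\cdot)\bigr)$, and a nonzero solution in the stated class would force a pole of $\mathfrak{A}(\lambda)^{-1}$ on the critical line.

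For general $1<s<\infty$, where Parseval is unavailable, the solution operator is a Mellin convolution on $L^s\bigl((0,\infty);dr/r\bigr)$ with operator-valued kernel built from $\mathfrak{A}(\lambda)^{-1}$, and one invokes a Mikhlin-type Mellin multiplier theorem; the derivative bounds $\|\partial_\lambda^k\mathfrak{A}(\lambda)^{-1}\|\lesssim\langle\lambda\rangle^{-k}$ required for it follow by differentiating the parameter-dependent estimate. Equivalently, and perhaps more robustly, one decomposes $K$ dyadically into annuli $K_j=\{2^j<r<2^{j+1}\}$, applies on each $K_j$ (after rescaling to unit size) the local Stokes regularity estimate of Agmon--Douglis--Nirenberg type, interior plus up to the two straight sides, and reassembles the pieces with the corner weights by a discrete Hardy inequality whose summability is exactly what placing $\Re\lambda=\gamma-2/s$ strictly inside an eigenvalue-free strip provides. \textbf{The main obstacle} is precisely this $s\neq2$ step: converting $L^2$-type resolvent information on the pencil into an $L^s$ bound on the solution operator, which needs a genuine operator-valued singular-integral (Mellin multiplier) estimate, or in the dyadic picture uniform local Stokes constants together with the correct discrete Hardy inequality. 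Everything else — the explicit form \eqref{eq:L-symbol} of $\mathfrak{L}$, the polar reduction, and the bookkeeping of the $\ell$ cross-sectional derivatives — is routine, which is why the statement is simply quoted from \cite{Mazya2010}.
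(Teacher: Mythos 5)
The paper gives no proof of this theorem: it is recalled verbatim from \cite[Theorems 1.2.5--1.2.8]{Mazya2010} (as the sentence preceding the statement makes explicit), so there is no in-paper argument to compare yours against. That said, your outline is a faithful reconstruction of the argument in that reference: Mellin transformation of the dilation-homogeneous system to the pencil $\mathfrak{A}(\lambda)$ on the arc $(0,\phi)$; Fredholmness and discreteness of the spectrum; the parameter-ellipticity estimate giving uniform invertibility on the eigenvalue-free line $\Re\lambda=\gamma-2/s$; Parseval for $s=2$; and, for general $s$, either an operator-valued Mellin multiplier theorem or a dyadic decomposition into annuli with rescaled local estimates of Agmon--Douglis--Nirenberg type summed against the corner weights. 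The one caveat worth recording is the step you yourself flag: for $s\neq 2$ the multiplier route is not a routine Mikhlin application, because the symbol $\mathfrak{A}(\lambda)^{-1}$ is operator-valued and $L^s$-boundedness of operator-valued Mellin convolutions requires more than decay of the $\lambda$-derivatives of the norm (in general one needs $R$-boundedness, or one works instead with pointwise estimates on the Green's matrix). The dyadic alternative you describe --- uniform local Stokes estimates on rescaled annuli combined with solvability of the model problem on the cone, reassembled using that $\gamma-2/s$ lies strictly inside an eigenvalue-free strip --- is the route actually carried out in \cite{Mazya2010} and is the one to make precise if you intend to write this out in full; as a sketch, your proposal correctly identifies every ingredient, and your uniqueness remark should simply be rephrased as injectivity of the problem on the weighted class, which follows from invertibility of $\mathfrak{A}(\lambda)$ on the critical line.
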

\begin{theorem}
\label{th:stokes-homogeneous-reg-2}
Let $\ell \geq 0$ be an integer, $1<s<\infty$, and let $\bu, p$ be solutions to \eqref{eq:stokes}, 
where
\begin{equation*}
    \bbf \in \bcK^{\ell, s}_{\gamma-2}(K), \qquad g\in \cK^{\ell+1, s}_{\gamma-1}(K).
\end{equation*}
Suppose that
\begin{equation*}
  \bu \in \bcK^{\ell+1,s}_{\gamma}(K), \qquad  p \in \cK^{\ell,s}_{\gamma-1}(K), \qquad \eta
  \bu \in \bm{W}^{2,s}(K), \qquad \eta p \in W^{1,s}(K),
\end{equation*}
for all $\eta\in C^\infty_0(\overline{K}\setminus\{0\})$. 

Then $\bu\in \bcK^{\ell+2,s}_{\gamma}(K)$, $p\in \cK^{\ell+1,s}_{\gamma-1}(K)$ and 
there exists a constant $C>0$ (depending on $\ell$, $s$ and on $\gamma$) such that
 \begin{multline}
 \label{eq:homog-estimate-bis}
  \| \bu \|_{\bcK^{\ell+2,s}_\gamma(K)} +  \|p\|_{\cK^{\ell+1,s}_{\gamma-1}(K)} 
\\ \leq C \left\{ \| \bbf \|_{\bcK^{\ell,s}_{\gamma-2}(K)} + \| g\|_{\cK^{\ell+1,s}_{\gamma-1}(K)}
   + \| \bu \|_{\bcK^{\ell+1,s}_\gamma(K)} +  \|p\|_{\cK^{\ell,s}_{\gamma-1}(K)} \right\}.
\end{multline}
\end{theorem}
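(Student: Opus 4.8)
The plan is to reduce the weighted, higher–order a-priori estimate \eqref{eq:homog-estimate-bis} to a scale of \emph{classical}, unweighted Agmon--Douglis--Nirenberg (ADN) a-priori estimates for the Stokes system \eqref{eq:stokes} on a \emph{fixed} reference annulus, exploiting the invariance of \eqref{eq:stokes} under isotropic dilations. Decompose $K$ dyadically into the annuli $K_j=\{x\in K:2^{-j-1}<|x|<2^{-j+1}\}$, $j\in\mathbb{Z}$ (so $j\to+\infty$ resolves the corner and $j\to-\infty$ the behaviour at infinity), and put $K'_j=\{x\in K:2^{-j-2}<|x|<2^{-j+2}\}$. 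For each $j$ set $\rho=2^{-j}$ and, on the fixed reference sets $\mathcal{A}=\{1/2<|y|<2\}\cap K$ and $\mathcal{A}'=\{1/4<|y|<4\}\cap K$, introduce the rescaled fields
\begin{equation*}
  \bu_j(y)=\bu(\rho y),\qquad p_j(y)=\rho\,p(\rho y),\qquad \bbf_j(y)=\rho^2\bbf(\rho y),\qquad g_j(y)=\rho\,g(\rho y).
\end{equation*}
Since $\rho K=K$ and the homogeneous Dirichlet condition is scale invariant, $(\bu_j,p_j)$ solves \eqref{eq:stokes} on $\mathcal{A}'$ with data $(\bbf_j,g_j)$ and $\bu_j=\bzero$ on $\partial K\cap\overline{\mathcal{A}'}$.

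The first, and decisive, observation is a freezing of the weights. On $K_j$ one has $|x|\simeq\rho$, hence $r^{|\alpha|-\gamma}\simeq\rho^{|\alpha|-\gamma}$ there, with constants depending only on $\ell$ and $\gamma$; the change of variables $x=\rho y$, together with $\dalpha_x[\bu(\rho y)]=\rho^{-|\alpha|}(\dalpha\bu_j)(y)$ and $dx=\rho^2\,dy$, gives
\begin{equation*}
  \|r^{|\alpha|-\gamma}\dalpha\bu\|_{L^s(K_j)}^s\;\simeq\;\rho^{\,2-s\gamma}\,\|\dalpha\bu_j\|_{L^s(\mathcal{A})}^s ,
\end{equation*}
and — this is the point — the \emph{same} power $\rho^{\,2-s\gamma}$ is produced by $\|r^{|\alpha|-\gamma+1}\dalpha p\|_{L^s(K_j)}^s$, by $\|r^{|\alpha|-\gamma+2}\dalpha\bbf\|_{L^s(K_j)}^s$ and by $\|r^{|\alpha|-\gamma+1}\dalpha g\|_{L^s(K_j)}^s$, precisely because the weight indices $(\gamma,\gamma-1,\gamma-2,\gamma-1)$ attached to $(\bu,p,\bbf,g)$ are the ones under which $\mathcal{L}$ acts scale–consistently (i.e.\ maps $\bcK^{\ell+2,s}_\gamma(K)\times\cK^{\ell+1,s}_{\gamma-1}(K)$ into $\bcK^{\ell,s}_{\gamma-2}(K)\times\cK^{\ell+1,s}_{\gamma-1}(K)$). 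Thus, up to one common scalar factor per annulus, summing homogeneous weighted norms over the $K_j$ amounts, after rescaling, to summing unweighted $W^{m,s}$-norms of the rescaled fields over $\mathcal{A}$.

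On $\mathcal{A}$ we now invoke the classical $(\ell+2)$-order a-priori estimate for the Stokes system: covering the compact set $\overline{\mathcal{A}}$ by finitely many balls (with controlled enlargements still contained in $\mathcal{A}'$) and combining the interior ADN estimate with the flat-boundary ADN estimate for \eqref{eq:stokes} — note that $\overline{\mathcal{A}}\cap\partial K$ consists of two straight segments contained in the \emph{interior} of the two bounding rays of $K$ and lying at a fixed positive distance from the vertex, so $\partial K$ is flat in a full neighbourhood of each and no boundary-corner estimate enters — yields a constant $C=C(\ell,s,\gamma,\phi)$, \emph{independent of $j$}, with
\begin{equation*}
  \|\bu_j\|_{\bm{W}^{\ell+2,s}(\mathcal{A})}+\|p_j\|_{W^{\ell+1,s}(\mathcal{A})}
  \;\leq\;C\Big(\|\bbf_j\|_{\bm{W}^{\ell,s}(\mathcal{A}')}+\|g_j\|_{W^{\ell+1,s}(\mathcal{A}')}+\|\bu_j\|_{\bm{W}^{\ell+1,s}(\mathcal{A}')}+\|p_j\|_{W^{\ell,s}(\mathcal{A}')}\Big).
\end{equation*}
The hypotheses $\eta\bu\in\bm{W}^{2,s}$, $\eta p\in W^{1,s}$ for all $\eta\in C^\infty_0(\overline{K}\setminus\{0\})$, upgraded to order $\ell+2$ by iterating interior and flat-boundary ADN \emph{regularity} with the data $\bbf_j,g_j\in\bm{W}^{\ell,s}(\mathcal{A}')\times W^{\ell+1,s}(\mathcal{A}')$, guarantee $\bu_j\in\bm{W}^{\ell+2,s}(\mathcal{A})$ and $p_j\in W^{\ell+1,s}(\mathcal{A})$ for every $j$, so the displayed estimate is not vacuous; for $j$ very negative (i.e.\ $r$ large), where the cut-offs $\eta$ do not reach, the same local regularity is read off directly from $\bu\in\bcK^{\ell+1,s}_\gamma(K)$, $p\in\cK^{\ell,s}_{\gamma-1}(K)$. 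Scaling back by the freezing identity of the second paragraph, raising the displayed inequality to the $s$-th power, cancelling the common factor $\rho^{\,2-s\gamma}>0$, and summing over $j\in\mathbb{Z}$ — using that the $K'_j$ have bounded overlap and that the $K_j$ cover $K$ — yields, after taking the $s$-th root, precisely \eqref{eq:homog-estimate-bis}; finiteness of the right-hand side of \eqref{eq:homog-estimate-bis}, which holds by hypothesis, simultaneously gives the memberships $\bu\in\bcK^{\ell+2,s}_\gamma(K)$ and $p\in\cK^{\ell+1,s}_{\gamma-1}(K)$.

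The only genuinely delicate step is the scaling bookkeeping of the second paragraph: one must check that the homogeneous weight exponents, the Jacobian $\rho^2$ and the derivative factors $\rho^{-|\alpha|}$ conspire to leave a \emph{single} power of $\rho$ common to all four terms — equivalently, that the quadruple of weight indices is exactly the scale-consistent one for $\mathcal{L}$ — since otherwise the dyadic sum would not telescope. Everything else (uniformity in $j$ of the reference-scale ADN estimate, including its proof by a finite cover that never sees a boundary corner, bounded overlap of the $K'_j$, and the passage from local $\bm{W}^{\ell+2,s}$-regularity to the global weighted statement) is routine, and the scheme coincides with the weighted-space localization argument underlying \cite{Mazya2010}.
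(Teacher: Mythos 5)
The paper does not actually prove this statement: Theorems \ref{th:stokes-homogeneous-reg} and \ref{th:stokes-homogeneous-reg-2} are quoted verbatim from \cite[Theorems 1.2.5--1.2.8]{Mazya2010} and used as black boxes. Your proposal therefore cannot coincide with ``the paper's proof''; what you have written is a self-contained proof by the standard Kondrat'ev/Maz'ya--Rossmann mechanism, namely dyadic decomposition of the sector, rescaling each annulus to a fixed reference annulus on which the Stokes system is scale-invariant, a $j$-uniform local ADN a-priori estimate away from the vertex, and resummation. I checked the one step you flag as delicate: with $\bu_j(y)=\bu(\rho y)$, $p_j=\rho p(\rho y)$, $\bbf_j=\rho^2\bbf(\rho y)$, $g_j=\rho g(\rho y)$ the pair $(\bu_j,p_j)$ does solve \eqref{eq:stokes} with data $(\bbf_j,g_j)$, and each of the four weighted $L^s$-powers on $K_j$ transforms with the common factor $\rho^{2-s\gamma}$, so the dyadic sum does reassemble into the homogeneous $\cK$-norms; the bounded overlap of the $K_j'$ and the fact that the spectral condition on $\mathfrak{A}(\lambda)$ is not needed (you never invert the operator globally, consistent with its absence from the statement) are also in order. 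Two minor remarks: the qualitative local regularity $\bu_j\in \bm{W}^{\ell+2,s}(\mathcal{A})$ needed to make the a-priori estimate non-vacuous should be stated as following from interior/flat-boundary Stokes regularity on \emph{every} annulus $K_j'$ (each is a compact subset of $\overline{K}\setminus\{0\}$, so the cut-offs $\eta$ do reach it even for $j$ very negative -- your aside about large $r$ is unnecessary but harmless); and the $j$-uniform reference estimate should carry lower-order terms no stronger than $\|\bu_j\|_{\bm{W}^{\ell+1,s}}+\|p_j\|_{W^{\ell,s}}$, which is exactly what the classical local estimates give. What your route buys is a transparent, elementary derivation of the cited shift estimate; what the paper's route buys is brevity, since the result is exactly the one already catalogued in \cite{Mazya2010}.
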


\begin{remark}
The theorems in \cite{Mazya2010} are stated with the operator  pencil
\begin{equation}
\begin{pmatrix}
    -\lambda^2 - \partial_\theta^2 & 0 & (\lambda -1 )\cos\theta-\sin\theta\partial_\theta \\
    0 & -\lambda^2 - \partial_\theta^2 & (\lambda -1)\sin\theta + \cos\theta\partial_\theta\\
    -(\lambda-1)\cos\theta+(\cos\theta)^2\partial_\theta 
    & (\lambda-1)\sin\theta+(\sin\theta)^2\partial_\theta&0
  \end{pmatrix}
\end{equation}
coming from the individual transformation in polar coordinates 
of the terms of the original matrix operator 
followed by multiplication by $r^2$ and substitution of $r\partial_r$ with $\lambda$. 
This symbol would therefore be applied to the Cartesian components $u_x$, $u_y$ of the Stokes flow.

The symbol \eqref{eq:L-symbol} results instead from the transformation of the 
Stokes equation into plane polar coordinates, i.e., 
it is applied to the polar components $u_r$ and $u_\theta$ of the velocity field $\bu$.
Since the Cartesian components $u_x$ and $u_y$ of $\bu$ 
can be obtained through a rotation of the polar components, the
eigenvalues of the symbol $\mathfrak{A}(\lambda)$ 
coincide \cite[Theorem 4.15]{Guo2006a}.
\end{remark}
Denote now by $\Lambda_{\min{}}$ the real part of the eigenvalue 
with smallest positive real part of the pencil $\mathfrak{A}(\lambda)$. 
Note that $\Lambda_{\min{}} > 1/2$ and that 
the strip $-\Lambda_{\min{}}<\Re\lambda< \Lambda_{\min{}}$ does
not contain eigenvalues of $\mathfrak{A}(\lambda)$.
Its existence is proved, for example, in \cite[Section 5.1]{Kozlov2001}.

\begin{remark}
Theorem \ref{th:stokes-homogeneous-reg} implies that, if 
$-\Lambda_{\min{}}< \gamma-2/s<\Lambda_{\min{}}$, 
$\bbf \in \bcK^{\infty, s}_{\gamma-2}(K)$, 
and
  $g\in \cK^{\infty, s}_{\gamma-1}(K)$
then
\begin{equation*}
\bu \in \bcK^{\infty, s}_{\gamma}(K)\qquad p\in \cK^{\infty,s}_{\gamma-1}(K).
\end{equation*}
\end{remark}
\begin{remark}
The $\ell = 0$ version of inequalities
\eqref{eq:homog-estimate} and \eqref{eq:homog-estimate-bis} read, respectively,
\begin{equation}
  \label{eq:homog-estimate-2}
\begin{multlined}[.9\displaywidth]
  \sum_{\alpham \leq 2} \| r^{\alpham-\gamma} \dalpha \bu \|_{L^s(K)} +
  \sum_{\alpham\leq 1} \|r^{\alpham-\gamma+1}\dalpha p\|_{L^s(K)}\\ \leq C
  \left\{ \|r^{2-\gamma} \bbf \|_{L^s(K)} + \sum_{\alpham \leq
      1}\|r^{\alpham-\gamma+1} \dalpha g\|_{L^s(K)}
      \right\}
\end{multlined}
\end{equation}
and
\begin{equation}
  \label{eq:homog-estimate-bis-2}
\begin{multlined}[.9\displaywidth]
  \sum_{\alpham \leq 2} \| r^{\alpham-\gamma} \dalpha \bu \|_{L^s(K)} +
  \sum_{\alpham\leq 1} \|r^{\alpham-\gamma+1}\dalpha p\|_{L^s(K)}\
  \\\qquad \leq C
  \bigg\{ \|r^{2-\gamma} \bbf \|_{L^s(K)} + \sum_{\alpham \leq
      1}\|r^{\alpham-\gamma+1} \dalpha g\|_{L^s(K)}
    + \sum_{\alpham\leq 1}\| r^{\alpham -\gamma}\dalpha\bu \|_{L^s(K)}
  \\
    +  \|r^{1-\gamma}p\|_{L^s(K)} 
      \bigg\}.
\end{multlined}
\end{equation}

\end{remark}
For a radius $0 < R < 1$, an integer $j$, and a scalar $\rho$ such that
$R-j\rho>0$, we introduce the sectors
\begin{equation}
  \label{eq:sectors}
  S_{R-j\rho} = B_{R-j\rho}\cap K = \left\{ x\in K: |x| < R-j\rho \right\},
\end{equation}
as shown in Figure \ref{fig:sectors}. 
\begin{figure}
  \centering
  \includegraphics[width=.45\textwidth]{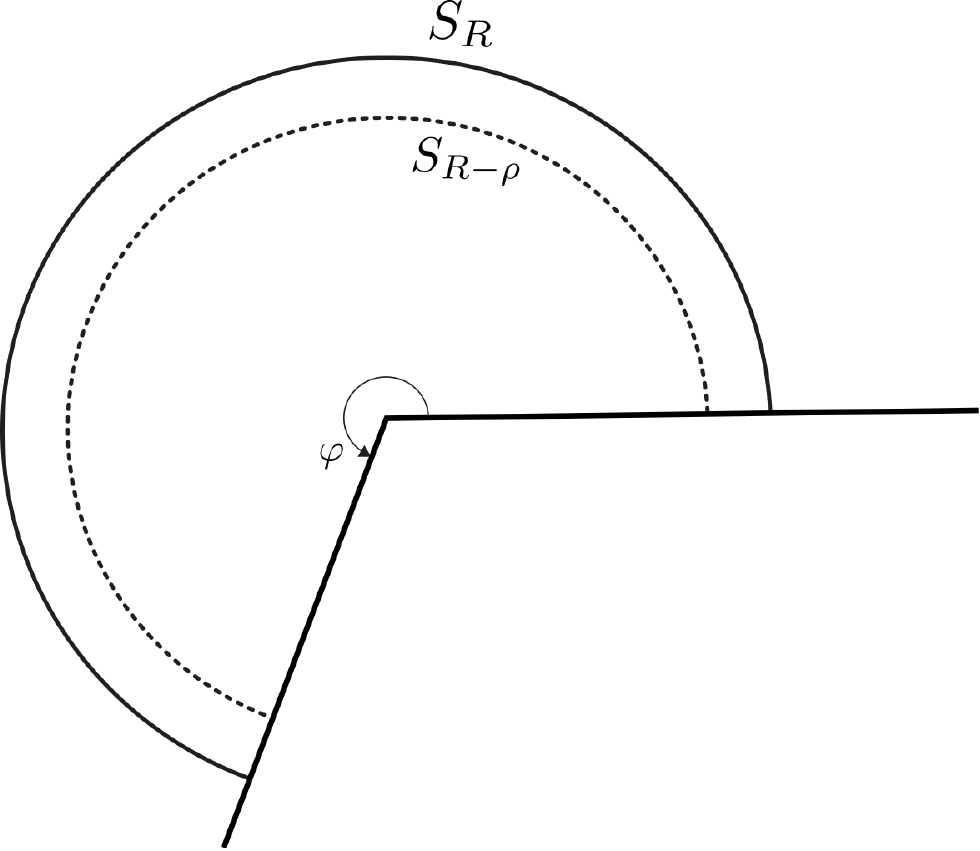}
  \caption{The sectors $S_{R}$, $S_{R-\rho}$.}
  \label{fig:sectors}
\end{figure}
Let $\chihat:\mathbb{R}\to[0,1]$ be a smooth step function such that
  \begin{equation*}
    \chihat\in C^\infty(\mathbb{R}), \qquad \chihat_{|_{(-\infty,0]}}=1, \qquad \chihat_{|_{[1,\infty)}} = 0 .
  \end{equation*}
  There exists a constant $C_\chi \geq 1$ such that, for all $x\in \mathbb{R}$,
  \begin{equation}
    \label{eq:chihat-def}
    |(\dalpha \chihat) (x)|\leq C_\chi,\quad \forall \alpha\in \mathbb{N}_0\text{ such that } \alpham \leq 2.
  \end{equation}
For all $j\in\mathbb{N}_0$ and all $\rho>0$ such that $R-j\rho \geq R/2$, we can then introduce a smooth cutoff function
$\chi:\mathbb{R}^2\to[0,1]$, such that $\chi\in C^\infty_0(B_{R-j\rho})$,
defined, for all $x\in \mathbb{R}^2$, as 
\begin{equation*}
  \chi(x) =
  \begin{cases}
    1 &\text{if }|x|\leq R-(j+1)\rho,\\
    \chihat\left(\dfrac{|x|-(R-(j+1)\rho)}{\rho}\right)&\text{if }R-(j+1)\rho<|x|<R-j\rho,\\
    0 &\text{if }|x|\geq R-j\rho.
  \end{cases}
\end{equation*}
By algebraic manipulations, \eqref{eq:chihat-def}, and since $R-j\rho\geq R/2$, it can
be shown that for all $x\in \mathbb{R}^2$ and all $\alpha\in \mathbb{N}^2_0$
with $\alpham \leq 2$, there holds
\begin{equation}
  \label{eq:chidef}
  \left| (\dalpha\chi)(x)\right|\leq \left( \frac{4}{R}+1 \right) C_\chi \rho^{-|\alpha|}.
\end{equation}We introduce several auxiliary lemmas.
\begin{lemma}
  \label{lemma:comm1}
  Let $\ell, j\in \mathbb{N}_0$, $1<s<\infty$, $\gamma \in \mathbb{R}$,
  $0<R_0\leq R<1$, 
  $\rho \in \mathbb{R}$ such that $R-j\rho \geq R/2$, and 
$v\in \cK^{\ell,s}_\gamma(S_{R-j\rho})$.
Then there exists $C>0$ such that, for any $\betam =
\ell$,
  \begin{equation*}
    \sum_{\alpham=1}\| r^{1-\gamma}\left[ \chi r^\ell , \dalpha \right] \dbeta v\|_{L^s(S_{R-(j+1)\rho})}
    \leq
    C(\ell + \rho^{-1}) \|r^{\ell-\gamma} \dbeta v\|_{L^s(S_{R-j\rho})},
  \end{equation*}
with $C$ dependent only on $C_\chi$ in \eqref{eq:chidef} and on $R_0$.
\end{lemma}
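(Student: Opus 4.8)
The plan is to write the commutator out explicitly and thereby reduce the claim to a pointwise bound. Since $\alpham = 1$ I would first write $\dalpha = \partial_k$ for some $k\in\{1,2\}$, with $x = (x_1,x_2)$ and $r = |x|$. The weight $\chi r^\ell$ is Lipschitz on the compact set $\overline{S_{R-j\rho}}$ — it is $C^\infty$ away from the origin, and it is a product of bounded Lipschitz functions (recall that $x\mapsto|x|^\ell$ is Lipschitz on bounded sets for $\ell\geq 1$, and $\chi$ is smooth) — so the Leibniz rule holds in the distributional sense, and for any $w\in L^s_{\mathrm{loc}}(S_{R-j\rho})$ one has, a.e.,
\[
  [\chi r^\ell,\partial_k]\,w = (\chi r^\ell)\,\partial_k w - \partial_k\bigl((\chi r^\ell)w\bigr) = -\bigl((\partial_k\chi)\,r^\ell + \ell\,\chi\, r^{\ell-2}x_k\bigr)\,w .
\]
Taking $w = \dbeta v$, the commutator is thus a zeroth order operator acting on $\dbeta v \in L^s(S_{R-j\rho})$, hence well defined; this is the only object that appears.

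Next I would estimate the two terms pointwise a.e. on $S_{R-(j+1)\rho}$. For the first, $\partial_k\chi$ is supported in the annulus $R-(j+1)\rho\leq|x|\leq R-j\rho$, where $|\partial_k\chi|\leq(4/R_0+1)C_\chi\,\rho^{-1}$ by \eqref{eq:chidef} together with $R\geq R_0$; combined with $r<1$, which gives $r^{1-\gamma}r^\ell = r\cdot r^{\ell-\gamma}\leq r^{\ell-\gamma}$, this yields
\[
  r^{1-\gamma}\bigl|(\partial_k\chi)\,r^\ell\,\dbeta v\bigr| \leq (4/R_0+1)C_\chi\,\rho^{-1}\,r^{\ell-\gamma}\,|\dbeta v| .
\]
(Alternatively one may simply observe that $\chi\equiv 1$ on $S_{R-(j+1)\rho}$, so this term does not even enter the norm over that sector.) For the second, $|x_k|\leq r$ and $0\leq\chi\leq 1$ give
\[
  r^{1-\gamma}\bigl|\ell\,\chi\,r^{\ell-2}x_k\,\dbeta v\bigr| \leq \ell\, r^{\ell-\gamma}\,|\dbeta v| .
\]
Adding the two bounds, $\bigl|r^{1-\gamma}[\chi r^\ell,\partial_k]\dbeta v\bigr| \leq \bigl((4/R_0+1)C_\chi\,\rho^{-1} + \ell\bigr)\,r^{\ell-\gamma}\,|\dbeta v|$ a.e. on $S_{R-(j+1)\rho}$.

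Finally I would take $L^s(S_{R-(j+1)\rho})$ norms, enlarge the integration domain to $S_{R-j\rho}\supseteq S_{R-(j+1)\rho}$, and sum over the two choices of $k$, i.e. over $\alpham = 1$:
\[
  \sum_{\alpham=1}\bigl\| r^{1-\gamma}[\chi r^\ell,\dalpha]\dbeta v\bigr\|_{L^s(S_{R-(j+1)\rho})} \leq 2\bigl((4/R_0+1)C_\chi\,\rho^{-1} + \ell\bigr)\,\bigl\|r^{\ell-\gamma}\dbeta v\bigr\|_{L^s(S_{R-j\rho})},
\]
which is the asserted estimate with $C = 2\max\{1,\,(4/R_0+1)C_\chi\}$, depending only on $C_\chi$ and $R_0$. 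I do not expect any genuine obstacle: the only two points needing a moment's care are the validity of the product rule for the merely Lipschitz weight $\chi r^\ell$ at the origin, and the systematic use of $r<1$ and $|x_k|\leq r$ to rebalance the powers of $r$ so that both contributions are controlled by the single quantity $\|r^{\ell-\gamma}\dbeta v\|_{L^s(S_{R-j\rho})}$ on the right-hand side. (If $S_{R-(j+1)\rho}=\emptyset$ the estimate is trivial.)
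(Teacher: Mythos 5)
Your proof is correct and follows essentially the same route as the paper's: expand the commutator as $-\dalpha(\chi r^\ell)\dbeta v$, bound $|\dalpha(r^\ell)|\leq \ell r^{\ell-1}$ and $|\dalpha\chi|$ via \eqref{eq:chidef}, and absorb the extra factor of $r$ using $r<1$. Your parenthetical observation that $\dalpha\chi$ vanishes on $S_{R-(j+1)\rho}$ (so that term is actually zero there) is a valid sharpening that the paper does not exploit, but it changes nothing in the stated estimate.
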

\begin{proof}
  For all $x\in \mathbb{R}^2$ and $\alpham = 1$, there holds $|\dalpha
  (r(x)^\ell)|\leq \ell r(x)^{\ell-1}$. By elementary manipulations,
  we have, for all multiindices $\betam = \ell$ and $\alpham=1$,
  \begin{multline*}
    \| r^{1-\gamma}\left[ \chi r^\ell , \dalpha \right] \dbeta v\|_{L^s(S_{R-(j+1)\rho})}
    \\ \leq 
    \| \dalpha \chi \|_{L^\infty(S_R)}\| r^{\ell+1-\gamma}\dbeta v\|_{L^s(S_{R-j\rho})} 
        + \ell \|r^{\ell-\gamma}\dbeta v\|_{L^s(S_{R-j\rho})}.
  \end{multline*}
  Using \eqref{eq:chidef} gives the assertion.
\end{proof}
\begin{lemma}
  \label{lemma:comm2}
  Let $\ell, j\in \mathbb{N}_0$, $1<s<\infty$, $\gamma \in \mathbb{R}$,
  $0<R_0\leq R<1$, $\rho \in
  \mathbb{R}$ such that $R-j\rho \geq R/2$, and $v\in
  \cK^{\ell+1,s}_\gamma(S_{R-j\rho})$. Then there exists
  $C>0$ such that, for any $\betam = \ell$,
  \begin{multline*}
    \sum_{\alpham=2}    \| r^{2-\gamma}\left[ \chi r^\ell , \dalpha \right] \dbeta v\|_{L^s(S_{R-(j+1)\rho})}
\\
    \leq
    C\sum_{\deltam=0,1}\left(\ell + \rho^{-1}\right)^{2-\deltam} \|r^{\ell+\deltam-\gamma} \partial^{\beta+\delta} v\|_{L^s(S_{R-j\rho})}.
  \end{multline*}
The constant $C$ depends only on $C_\chi$ in \eqref{eq:chidef} and on
  $R_0$.
\end{lemma}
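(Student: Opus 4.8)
The plan is to follow the proof of Lemma~\ref{lemma:comm1}, now carrying one additional order of differentiation. Since $\chi r^\ell$ is a multiplication operator and $\alpham = 2$, the Leibniz rule gives, with $\delta := \alpha-\zeta$,
\begin{equation*}
  \bigl[ \chi r^\ell, \dalpha \bigr]\dbeta v
  = - \sum_{\substack{\zeta\leq\alpha\\\zetam\geq1}}\binom{\alpha}{\zeta}\,\partial^\zeta\!\bigl(\chi r^\ell\bigr)\,\partial^{\beta+\delta} v ,
\end{equation*}
so that $\zetam\in\{1,2\}$ and $\deltam = 2-\zetam \in \{0,1\}$, which is exactly what produces the two groups of terms on the right-hand side of the asserted inequality. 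I would then expand each coefficient once more,
\begin{equation*}
  \partial^\zeta\!\bigl(\chi r^\ell\bigr) = \sum_{\xi\leq\zeta}\binom{\zeta}{\xi}\,\dxi\chi\;\partial^{\zeta-\xi}\!\bigl(r^\ell\bigr).
\end{equation*}

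The next step is to estimate the two factors. From \eqref{eq:chidef} together with $R_0\leq R$ one gets $|\dxi\chi|\leq C\rho^{-\xim}$ for $\xim\leq2$, with $C$ depending only on $C_\chi$ and $R_0$; and iterating the elementary bound $|\dalpha(r^\ell)|\leq \ell\,r^{\ell-1}$ valid for $\alpham=1$ yields $|\partial^{\zeta-\xi}(r^\ell)|\leq C\,\ell^{|\zeta-\xi|}\,r^{\ell-|\zeta-\xi|}$ for $|\zeta-\xi|\leq2$ with $C$ absolute. Hence, pointwise, every summand above is dominated by $C\,\rho^{-\xim}\,\ell^{|\zeta-\xi|}\,r^{\ell-|\zeta-\xi|}\,|\partial^{\beta+\delta}v|$, where crucially $\xim + |\zeta-\xi| = \zetam = 2-\deltam$.

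It then remains to absorb the spare powers of $r$ and to collapse the mixed powers of $\ell$ and $\rho^{-1}$. On $S_{R-(j+1)\rho}$ we have $0<r<R<1$, so $r^{2-\gamma}\,r^{\ell-|\zeta-\xi|} = r^{\ell+\deltam-\gamma}\,r^{\,\xim}\leq r^{\ell+\deltam-\gamma}$, while $\rho^{-\xim}\ell^{|\zeta-\xi|}\leq(\ell+\rho^{-1})^{\xim+|\zeta-\xi|} = (\ell+\rho^{-1})^{2-\deltam}$. Multiplying by $r^{2-\gamma}$, taking $L^s(S_{R-(j+1)\rho})$ norms, enlarging the integration domain to $S_{R-j\rho}$ on the right, and summing over the finitely many $\alpha$ with $\alpham=2$ and over $\zeta\leq\alpha$, $\xi\leq\zeta$ delivers the claim. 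Note that the hypothesis $v\in\cK^{\ell+1,s}_\gamma(S_{R-j\rho})$ — rather than $\cK^{\ell,s}_\gamma$ as in Lemma~\ref{lemma:comm1} — is precisely what the right-hand side requires, since $\partial^{\beta+\delta}v$ now carries up to $\ell+1$ derivatives. I do not expect any genuine obstacle here: the only real work is the bookkeeping that keeps the power of $(\ell+\rho^{-1})$ in step with the weight exponent $\ell+\deltam-\gamma$, and there is no analytical difficulty beyond what already appears in Lemma~\ref{lemma:comm1}.
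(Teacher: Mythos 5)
Your proof is correct and follows essentially the same route as the paper's: a Leibniz expansion of the commutator into terms with $\deltam=0$ and $\deltam=1$ derivatives falling on $v$, pointwise bounds $|\dxi\chi|\leq C\rho^{-\xim}$ and $|\partial^{\zeta-\xi}(r^\ell)|\leq C\ell^{|\zeta-\xi|}r^{\ell-|\zeta-\xi|}$, absorption of spare powers of $r$ using $r<1$, and the collapse $\rho^{-\xim}\ell^{|\zeta-\xi|}\leq(\ell+\rho^{-1})^{2-\deltam}$. The paper performs the second Leibniz expansion of $\partial^\zeta(\chi r^\ell)$ implicitly, but the bookkeeping and the resulting constants are the same.
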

\begin{proof}
  The proof follows the same lines as the proof of the last lemma. 
  We have, for all multiindices $\betam = \ell$ and $\alpham=2$
  \begin{align*}
   & \| r^{2-\gamma}\left[ \chi r^\ell , \dalpha \right] \dbeta v\|_{L^s(S_{R-(j+1)\rho})}
     \\
   &  \qquad \leq
      \| r^{2-\gamma} \dalpha \left( \chi r^\ell \right) \dbeta v\|_{L^s(S_{R-(j+1)\rho})}
      +
      2\sum_{\deltam = 1:\delta\leq \alpha}\| r^{2-\gamma} \ddelta \left( \chi r^\ell \right) \partial^{\beta + \alpha -\delta} v\|_{L^s(S_{R-(j+1)\rho})}\\
    & \qquad\leq
    \left( C \rho^{-2} + C \ell \rho^{-1} + \ell^2 \right) \|r^{\ell-\gamma} \dbeta v\|_{L^s(S_{R-j\rho})}
    \\
    & \qquad \qquad +
     2\sum_{\deltam = 1 :\delta\leq\alpha} \left( C\rho^{-1}+\ell \right)\| r^{\ell+1-\gamma}  \partial^{\beta + \alpha -\delta} v\|_{L^s(S_{R-j\rho})},
  \end{align*}
  hence the assertion follows.
\end{proof}
We now give a localised and explicit version of Theorem \ref{th:stokes-homogeneous-reg}.
\begin{proposition}
  \label{th:stokes-estimate}
 Let $\ell\in \mathbb{N}$, $1<s<\infty$, $0< R<1$, and $-\Lambda_{\min{}}<\gamma-2/s < \Lambda_{\min{}}$.
 Suppose that 
\begin{equation}
  \label{eq:k-hypothesis}
  \bbf \in \bcK^{\ell,s}_{\gamma-2}(K),
  \qquad g\in
  \cK^{\ell+1,s}_{\gamma-1}(K) .
\end{equation}
Then, $\bu$, $p$ are the unique solutions to the Stokes system \eqref{eq:stokes}, 
with $\bu \in \bcK^{\ell+2,s}_{\gamma}(K)$, $p\in \cK^{\ell+1,s}_{\gamma-1}(K)$, 
and there exists $C>0$ such that for all 
$j\in \mathbb{N}$ such that $j\geq \ell$, and for all $\rho \in (0, R/(2j)]$,
\begin{multline}
  \label{eq:third-weighted}
  | \bu |_{\bcK^{\ell+2,s}_\gamma (S_{R-(j+1)\rho})}
  +     | p |_{\cK^{\ell+1,s}_{\gamma-1}(S_{R-(j+1)\rho})} 
  \leq 
C \bigg\{
    |  \bbf |_{\bcK^{\ell,s}_{\gamma-2}(S_{R-j\rho})}
    +  \sum_{n\leq 1}|  g
    |_{\cK^{\ell+n,s}_{\gamma-1}(S_{R-j\rho})} \\
  +\rho^{-1}  |  p |_{\cK^{\ell,s}_{\gamma-1}(S_{R-j\rho})}
  + \sum_{n \leq 1}\rho^{n-2}|  \bu |_{\bcK^{\ell+n,s}_{\gamma}(S_{R-j\rho})}
  \bigg\}.
\end{multline}
\end{proposition}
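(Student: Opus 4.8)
The plan is to localize the global shift estimate of Theorem~\ref{th:stokes-homogeneous-reg} to the nested sectors $S_{R-j\rho}\supset S_{R-(j+1)\rho}$ by the standard cutoff-and-commutator device, with all constants tracked explicitly in $\rho$ and $\ell$. Existence, uniqueness and the a priori membership $\bu\in\bcK^{\ell+2,s}_\gamma(K)$, $p\in\cK^{\ell+1,s}_{\gamma-1}(K)$ follow at once from Theorem~\ref{th:stokes-homogeneous-reg} under the hypotheses \eqref{eq:k-hypothesis} and the condition on $\gamma-2/s$; so the content is the inequality \eqref{eq:third-weighted}. First I would fix a multi-index $\beta$ with $\betam=\ell$ and the cutoff $\chi\in C^\infty_0(B_{R-j\rho})$ described before the lemmas (built from $\chihat$ with the scaling parameter $\rho$), so that $\chi\equiv 1$ on $S_{R-(j+1)\rho}$ and \eqref{eq:chidef} holds. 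The idea is to apply $\mathcal{L}$ to the pair $(\chi r^\ell\,\dbeta\bu,\ \chi r^\ell\,\dbeta p)$: since $\mathcal{L}$ has constant coefficients, $\mathcal{L}(\chi r^\ell\dbeta\bu,\chi r^\ell\dbeta p)^\top$ equals $\chi r^\ell\,\dbeta(\bbf, g)^\top$ plus commutator terms of the form $[\mathcal{L},\chi r^\ell]\,\dbeta(\bu,p)^\top$, which are supported in the annular region $S_{R-j\rho}\setminus S_{R-(j+1)\rho}$ and involve at most second-order derivatives of $\chi r^\ell$ hitting at most first-order derivatives of $(\bu,p)$ in the momentum equation and first-order derivatives of $\chi r^\ell$ hitting $\bu$ in the continuity equation.

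Next I would feed this into the $\ell=0$ version of Theorem~\ref{th:stokes-homogeneous-reg}, i.e.\ inequality \eqref{eq:homog-estimate-2}, applied to $(\chi r^\ell\dbeta\bu,\chi r^\ell\dbeta p)$ on $K$ (legitimate because $\chi r^\ell\dbeta\bu$ vanishes near $\dK\setminus\{0\}$ and is compactly supported away from infinity, and the weight exponent $\gamma$ is admissible). On the left-hand side, since $\chi\equiv 1$ on $S_{R-(j+1)\rho}$, one controls $\sum_{\alpham\le 2}\|r^{\alpham-\gamma}\dalpha(r^\ell\dbeta\bu)\|_{L^s(S_{R-(j+1)\rho})}$ and the corresponding pressure term; re-expanding $\dalpha(r^\ell\dbeta\bu)$ by Leibniz and bounding $|\dxi r^\ell|\le C\ell^{\xim}r^{\ell-\xim}$ recovers, after summing over $\betam=\ell$ and using $\sum_{\betam=\ell}\binom{\ell}{\cdot}=\binom{\ell}{\cdot}$-type identities, the seminorms $|\bu|_{\bcK^{\ell+2,s}_\gamma}$ and $|p|_{\cK^{\ell+1,s}_{\gamma-1}}$ on $S_{R-(j+1)\rho}$, up to lower-order terms absorbed into the right-hand side of \eqref{eq:third-weighted}. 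On the right-hand side, the main-order term $\|r^{2-\gamma}\chi r^\ell\dbeta\bbf\|_{L^s}$ and $\sum_{\alpham\le1}\|r^{\alpham-\gamma+1}\dalpha(\chi r^\ell\dbeta g)\|_{L^s}$ give, after the same Leibniz bookkeeping and summation over $\beta$, the $\bbf$- and $g$-seminorms on $S_{R-j\rho}$; the commutator contributions are precisely those estimated in Lemma~\ref{lemma:comm1} (first-order, from the continuity equation and from the coupling) and Lemma~\ref{lemma:comm2} (second-order, from the Laplacian), yielding the factors $(\ell+\rho^{-1})^{n}$ which, under the standing assumption $j\ge\ell$ and $\rho\le R/(2j)$, obey $\ell+\rho^{-1}\le\rho^{-1}+\rho^{-1}=2\rho^{-1}$, so that $(\ell+\rho^{-1})^{2-\deltam}\le C\rho^{\deltam-2}$. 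This converts the commutator bounds into the terms $\rho^{-1}|p|_{\cK^{\ell,s}_{\gamma-1}(S_{R-j\rho})}$ and $\sum_{n\le1}\rho^{n-2}|\bu|_{\bcK^{\ell+n,s}_\gamma(S_{R-j\rho})}$ appearing on the right of \eqref{eq:third-weighted}.

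Finally I would collect terms: the lower-order pieces of the left-hand side generated by the Leibniz expansion (those with fewer than $\ell+2$, resp.\ $\ell+1$, derivatives) are exactly of the type $\rho^{n-2}|\bu|_{\bcK^{\ell+n,s}_\gamma}$ and $\rho^{-1}|p|_{\cK^{\ell,s}_{\gamma-1}}$ already present on the right, so they can be kept there; this gives \eqref{eq:third-weighted} with a constant $C$ depending only on $C_\chi$, on $R_0\le R$, on $s$ and on $\gamma$ (the $\ell$-dependence of the constant in Theorem~\ref{th:stokes-homogeneous-reg} is irrelevant here because we invoke only its $\ell=0$ instance). The main obstacle, and the step requiring care, is the bookkeeping in the Leibniz expansion of $\dalpha(\chi r^\ell \dbeta v)$ together with the summation over all $\betam=\ell$: one must check that the powers of $\ell$ arising from differentiating $r^\ell$ combine with the binomial coefficients so as to produce exactly the weights $r^{\ell+\deltam-\gamma}$ and the factors $(\ell+\rho^{-1})^{\cdot}$ claimed in Lemmas~\ref{lemma:comm1}--\ref{lemma:comm2}, and that no term with more than $\ell+2$ (resp.\ $\ell+1$) total derivatives of $v$ is created; the support properties of $\chi$ and its derivatives make the localization to the annulus automatic, and the admissibility of $\gamma-2/s$ is what licenses the use of Theorem~\ref{th:stokes-homogeneous-reg} at every step.
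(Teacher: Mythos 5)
Your proposal is correct and follows essentially the same route as the paper: cutoff $\chi r^{\ell}\dbeta$ applied to $(\bu,p)$, the $\ell=0$ instance \eqref{eq:homog-estimate-2} of Theorem~\ref{th:stokes-homogeneous-reg} applied to the localized pair, commutator bounds from Lemmas~\ref{lemma:comm1}--\ref{lemma:comm2}, the key inequality $\ell\le\rho^{-1}$ from $j\ge\ell$ and $\rho\le R/(2j)$, and summation over $\betam=\ell$. The only cosmetic difference is that you phrase the left-hand-side recovery as a Leibniz re-expansion of $\dalpha(r^{\ell}\dbeta\bu)$ where the paper writes it as a triangle inequality with the commutator $[\chi r^{\betam},\dalpha]$ — these are the same computation.
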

\begin{proof}
Fix $\ell\in \mathbb{N}$, the integer $j\geq \ell$, and assume \eqref{eq:k-hypothesis}.
Then it follows from Theorem \ref{th:stokes-homogeneous-reg} that
\begin{equation}
  \label{eq:first-reg}
  \bu \in \bcK^{\ell+2,s}_\gamma(K), \qquad p \in \cK^{\ell+1, s}_{\gamma-1}(K).
\end{equation}
Consider $\beta\in {\mathbb N}_0^2$ 
such that $\betam = \ell$ and write $\bw = \chi r^\betam\dbeta \bu$ 
and $q = \chi r^\betam\dbeta p$.
Then,
\begin{align*}
  - \Delta \bw + \nabla q
  & = \left[ -\Delta, \chi r^\betam \right] \dbeta\bu
    + \left[ \nabla, \chi r^\betam \right] \dbeta p  + \chi r^\betam \dbeta (-\Delta \bu + \nabla p)
  \\
  & = \left[ -\Delta, \chi r^\betam \right] \dbeta\bu
    + \left[ \nabla, \chi r^\betam \right] \dbeta p  + \chi r^\betam \dbeta \bbf
\end{align*}
hence \eqref{eq:k-hypothesis}, \eqref{eq:first-reg}, and Lemmas
\ref{lemma:comm1} and \ref{lemma:comm2} give
$-\Delta \bw + \nabla q \in
\bcK^{0,s}_{\gamma-2}(K)$. Then,
\begin{equation*}
  \nabla \cdot \bw
   = \left( \nabla (\chi r^\betam) \right) \cdot (\dbeta \bu) + \chi r^\betam \dbeta g
   \in \cK^{1,s}_{\gamma-1}(K).
\end{equation*}

We can therefore apply Theorem
\ref{th:stokes-homogeneous-reg} with
$\ell = 0$. Inequality \eqref{eq:homog-estimate-2} implies that there exists
$C>0$ independent of $\bw$ and $q$ such that
\begin{multline}
  \label{eq:first-weighted}
  \sum_{\alpham \leq 2} \| r^{\alpham-\gamma} \dalpha \bw \|_{L^s(K)} +
  \sum_{\alpham\leq 1} \|r^{\alpham-\gamma+1}\dalpha q\|_{L^s(K)}
  \\ \leq C \bigg\{ \|r^{2-\gamma} \left( -\Delta \bw + \nabla q \right)
    \|_{L^s(K)} + \sum_{\alpham \leq 1}\|r^{\alpham-\gamma+1} \dalpha\nabla\cdot
    \bw\|_{L^s(K)} 
    \bigg\}.
\end{multline}
By algebraic manipulation and the triangle inequality, the following
inequalities hold, for $\betam = \ell$
\begin{multline}
  \label{eq:weighted-comm-1}
  \sum_{\alpham = 2} \| r^{\betam+2-\gamma}\partial^{\alpha + \beta} \bu \|_{L^s(S_{R-(j+1)\rho})} \\
  \leq
  \sum_{\alpham = 2} \|r^{2-\gamma} \left[ \chi r^\betam, \dalpha \right] \dbeta \bu \|_{L^s(S_{R-(j+1)\rho})} + 
  \sum_{\alpham = 2} \|r^{2-\gamma}  \dalpha \bw  \|_{L^s(S_{R-(j+1)\rho})} 
\end{multline}
and
\begin{multline}
  \label{eq:weighted-comm-2}
  \sum_{\alpham = 1} \| r^{\betam+2-\gamma}\partial^{\alpha + \beta} p \|_{L^s(S_{R-(j+1)\rho})} \\
  \leq
  \sum_{\alpham = 1} \|r^{2-\gamma} \left[ \chi r^\betam, \dalpha \right] \dbeta p \|_{L^s(S_{R-(j+1)\rho})} + 
  \sum_{\alpham = 1} \|r^{2-\gamma}  \dalpha q \|_{L^s(S_{R-(j+1)\rho})} .
\end{multline}
We can estimate the terms with the commutators in \eqref{eq:weighted-comm-1} and
\eqref{eq:weighted-comm-2}: Lemmas \ref{lemma:comm1} and \ref{lemma:comm2} give
\begin{multline}
  \label{eq:commut-1}
  \sum_{\alpham = 2} \|r^{2-\gamma} \left[ \chi r^\betam, \dalpha \right] \dbeta \bu \|_{L^s(S_{R-(j+1)\rho})} 
\\
  \leq
 C \sum_{\alpham\leq 1}\left( \betam + \rho^{-1} \right)^{2-\alpham} \| r^{\betam +\alpham-\gamma} \partial^{\beta+\alpha} \bu \|_{L^s(S_{R-j\rho})},
\end{multline}
and
\begin{equation}
  \label{eq:commut-2}
  \sum_{\alpham = 1} \|r^{2-\gamma} \left[ \chi r^\betam, \dalpha \right] \dbeta p \|_{L^s(S_{R-(j+1)\rho})} 
  \leq
 C \left( \betam + \rho^{-1} \right) \| r^{\betam -\gamma+1} \dbeta p \|_{L^s(S_{R-j\rho})}
\end{equation}
with constant $C$ independent of $\betam$, $\rho$, $\bu$, and $p$.

Now, since $\rho \leq R/(2j)$ and $j\geq \ell$, then $\betam \leq \rho^{-1}$. 
In addition, from the inequalities \eqref{eq:weighted-comm-1}--\eqref{eq:commut-2} and using
\eqref{eq:first-weighted}, we obtain 
\begin{multline}
  \label{eq:second-weighted}
  \sum_{\alpham = 2} \| r^{\betam+\alpham-\gamma} \partial^{\beta+\alpha} \bu \|_{L^s(S_{R-(j+1)\rho})} +
  \sum_{\alpham = 1} \|r^{\betam+\alpham-\gamma+1}\partial^{\beta+\alpha} p\|_{L^s(S_{R-(j+1)\rho})}\\
\leq
  C \bigg\{ \|r^{2-\gamma} \left( -\Delta \bw + \nabla q \right)
    \|_{L^s(S_{R-j\rho})} + \sum_{\alpham \leq 1}\|r^{\alpham-\gamma+1} \dalpha\nabla\cdot \bw\|_{L^s(S_{R-j\rho})} \\
 +\rho^{-1}  \| r^{\betam -\gamma+1} \dbeta p \|_{L^s(S_{R-j\rho})}
 + \sum_{\alpham\leq 1} \rho^{\alpham-2} \| r^{\betam +\alpham-\gamma} \partial^{\beta+\alpha} \bu \|_{L^s(S_{R-j\rho})}
  \bigg\}.
\end{multline}
Using the commutators as before, the definition of $\bw$ and $q$, Lemmas
\ref{lemma:comm1} and \ref{lemma:comm2}, and the
inequality $\betam \leq \rho^{-1}$ we can show that
\begin{align*}
  & \|r^{2-\gamma} \left( -\Delta \bw + \nabla q \right) \|_{L^s(S_{R-j\rho})}
    \\
  &\qquad \leq 
    \begin{multlined}[t][.8\textwidth]
  \|r^{2-\gamma} \left[  \Delta , r^\betam  \right] \dbeta \bu \|_{L^s(S_{R-j\rho})}
  + \|r^{2-\gamma} \left[  \nabla, r^\betam  \right] \dbeta p \|_{L^s(S_{R-j\rho})}
  \\
  + \|r^{\betam-\gamma+1} \dbeta \left( -\Delta \bu + \nabla p \right)\|_{L^s(S_{R-j\rho})}
    \end{multlined}
   \\ & \qquad \leq
    \begin{multlined}[t][.8\textwidth]
  C \sum_{\alpham\leq 1} \rho^{\alpham-2} \| r^{\betam +\alpham-\gamma} \partial^{\beta+\alpha} \bu \|_{L^s(S_{R-j\rho})}
 + C\rho^{-1}  \| r^{\betam -\gamma+1} \dbeta p \|_{L^s(S_{R-j\rho})}
 \\
  + \|r^{2+\betam-\gamma} \dbeta \bbf \|_{L^s(S_{R-j\rho})}
\end{multlined}
\end{align*}
and, for all $\alpham \leq 1$
\begin{align*}
  &\|r^{2-\gamma}\dalpha \nabla\cdot \bw\|_{L^s(S_{R-j\rho})} 
  \\
  & \qquad \leq
  \sum_{\deltam = 1}\|r^{2-\gamma} \dalpha\left[ \partial^\delta , r^\betam  \right] \dbeta \bu \|_{L^s(S_{R-j\rho})}
 + \| r^{\betam -\gamma+2} \partial^{\beta+\alpha} \nabla\cdot \bu \|_{L^s(S_{R-j\rho})}
\\ &\qquad  \leq
  C \rho^{-1}\|r^{\betam-\gamma+1} \partial^{\beta+\alpha} \bu \|_{L^s(S_{R-j\rho})}
 + \| r^{\betam -\gamma+2} \partial^{\beta+\alpha} g \|_{L^s(S_{R-j\rho})}.
\end{align*}
Summing \eqref{eq:second-weighted} over all $\betam = \ell$ and using the last
two inequalities, 
\begin{multline*}
  | \bu |_{\bcK^{\ell+2,s}_\gamma (S_{R-(j+1)\rho})}
  +     | p |_{\cK^{\ell+1,s}_{\gamma-1}(S_{R-(j+1)\rho})} 
  \leq 
C \bigg\{
    |  \bbf |_{\bcK^{\ell,s}_{\gamma-2}(S_{R-j\rho})}
    +  \sum_{n\leq 1}|  g
    |_{\cK^{\ell+n,s}_{\gamma-1}(S_{R-j\rho})} \\
  +\rho^{-1}  |  p |_{\cK^{\ell,s}_{\gamma-1}(S_{R-j\rho})}
  + \sum_{n \leq 1}\rho^{n-2}|  \bu |_{\bcK^{\ell+n,s}_{\gamma}(S_{R-j\rho})}
  \bigg\}.
\end{multline*}
\end{proof}
\begin{proposition}
  \label{th:stokes-estimate-bis}
  Let $\ell\in \mathbb{N}_0$, $\gamma \in \mathbb{R}$, $1<s<\infty$,
  $0< R<1$, $j\in \mathbb{N}_0$ such that
  $j\geq \ell$, and $\rho \in (0,
R/(2j)]$.
Let $\bu$, $p$ be solutions to the Stokes system \eqref{eq:stokes}, with
\begin{equation}
  \label{eq:k-hypothesis-bis}
  \bbf \in \bcK^{\ell,s}_{\gamma-2}(S_{R-j\rho}),
  \qquad g\in
  \cK^{\ell+1,s}_{\gamma-1}(S_{R-j\rho}).
\end{equation}
Suppose that 
\begin{equation*}
  \bu \in \bcK^{\ell+1,s}_{\gamma}(S_{R-j\rho}), \quad  p \in \cK^{\ell,s}_{\gamma-1}(S_{R-j\rho}), \quad \eta
  \bu \in \bm{W}^{2,s}(S_{R-j\rho}), \quad \eta p \in W^{1,s}(S_{R-j\rho}),
\end{equation*}
for all 
$\eta \in C^\infty_0(\bar{S}_{R-j\rho}\setminus\{0\})$.
Then, $\bu \in \bcK^{\ell+2,s}_{\gamma}(S_{R-(j+1)\rho})$, $p\in
\cK^{\ell+1,s}_{\gamma-1}(S_{R-(j+1)\rho})$, and there exists $C>0$ independent
of $\ell$, $j$, $\rho$, such that \eqref{eq:third-weighted} holds.
\end{proposition}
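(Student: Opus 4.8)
The plan is to run exactly the same argument as in the proof of Proposition~\ref{th:stokes-estimate}, but invoking Theorem~\ref{th:stokes-homogeneous-reg-2} in place of Theorem~\ref{th:stokes-homogeneous-reg} at the single point where global regularity of $\bu$ and $p$ is used. The point is that in Proposition~\ref{th:stokes-estimate} one starts from the data hypothesis \eqref{eq:k-hypothesis} on all of $K$ and deduces, via Theorem~\ref{th:stokes-homogeneous-reg}, that $\bu\in\bcK^{\ell+2,s}_\gamma(K)$ and $p\in\cK^{\ell+1,s}_{\gamma-1}(K)$; here instead we are only given data in the sector $S_{R-j\rho}$ together with the a~priori regularity $\bu\in\bcK^{\ell+1,s}_\gamma(S_{R-j\rho})$, $p\in\cK^{\ell,s}_{\gamma-1}(S_{R-j\rho})$ and the local $W^{2,s}$/$W^{1,s}$ smoothness away from the corner. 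This is precisely the hypothesis set of Theorem~\ref{th:stokes-homogeneous-reg-2} (restricted to the sector), which upgrades the regularity by one order to $\bu\in\bcK^{\ell+2,s}_\gamma(S_{R-j\rho})$, $p\in\cK^{\ell+1,s}_{\gamma-1}(S_{R-j\rho})$.

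First I would fix $\beta\in\mathbb{N}_0^2$ with $\betam=\ell$ and set $\bw=\chi r^\betam\dbeta\bu$, $q=\chi r^\betam\dbeta p$, exactly as before; the cutoff $\chi$ is supported in $B_{R-j\rho}$ and equals $1$ on $B_{R-(j+1)\rho}$, so $\bw$, $q$ extend by zero to all of $K$. Using the (now available) regularity $\bu\in\bcK^{\ell+2,s}_\gamma(S_{R-j\rho})$, $p\in\cK^{\ell+1,s}_{\gamma-1}(S_{R-j\rho})$ from Theorem~\ref{th:stokes-homogeneous-reg-2} together with Lemmas~\ref{lemma:comm1} and \ref{lemma:comm2}, the commutator identities
\begin{align*}
  -\Delta\bw+\nabla q&=\bigl[-\Delta,\chi r^\betam\bigr]\dbeta\bu+\bigl[\nabla,\chi r^\betam\bigr]\dbeta p+\chi r^\betam\dbeta\bbf,\\
  \nabla\cdot\bw&=\bigl(\nabla(\chi r^\betam)\bigr)\cdot(\dbeta\bu)+\chi r^\betam\dbeta g
\end{align*}
show that $-\Delta\bw+\nabla q\in\bcK^{0,s}_{\gamma-2}(K)$ and $\nabla\cdot\bw\in\cK^{1,s}_{\gamma-1}(K)$. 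Then one applies Theorem~\ref{th:stokes-homogeneous-reg} with $\ell=0$ to the globally defined pair $(\bw,q)$ — this is legitimate since $\bw$, $q$ are compactly supported in $\overline K$ and vanish near the corner, so the condition $-\Lambda_{\min}<\gamma-2/s<\Lambda_{\min}$ is what is needed for \eqref{eq:homog-estimate-2} — and from there the chain \eqref{eq:weighted-comm-1}--\eqref{eq:second-weighted} and the two displayed estimates for $\|r^{2-\gamma}(-\Delta\bw+\nabla q)\|_{L^s(S_{R-j\rho})}$ and $\|r^{2-\gamma}\dalpha\nabla\cdot\bw\|_{L^s(S_{R-j\rho})}$ go through verbatim, using again $\betam\le\rho^{-1}$ (which holds since $\rho\le R/(2j)$ and $j\ge\ell$). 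Summing over all $\betam=\ell$ yields \eqref{eq:third-weighted}.

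The only genuine subtlety — and the step I expect to be the real content rather than pure bookkeeping — is checking that the hypotheses of Theorem~\ref{th:stokes-homogeneous-reg-2} are actually met on $S_{R-j\rho}$: one needs the given low-order weighted regularity of $(\bu,p)$ and, separately, that $\eta\bu\in\bm W^{2,s}$, $\eta p\in W^{1,s}$ for every $\eta\in C^\infty_0(\bar S_{R-j\rho}\setminus\{0\})$, which is exactly assumed. One should also note that Theorem~\ref{th:stokes-homogeneous-reg-2} is stated on the full sector $K$, not on a truncated sector; but since all the relevant norms are localized and the cutoff $\chi$ is supported well inside $S_{R-j\rho}$, one may either extend $(\bu,p)$ off $S_{R-j\rho}$ (e.g. by the sector-Stokes solution with the same data, or simply argue locally), or observe that the interior-regularity mechanism behind Theorem~\ref{th:stokes-homogeneous-reg-2} is purely local and hence applies on the truncated sector with constants depending only on $\ell$, $s$, $\gamma$. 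Granting this, the constant $C$ in \eqref{eq:third-weighted} is independent of $\ell$, $j$, $\rho$ for exactly the same reasons as in Proposition~\ref{th:stokes-estimate}, since the $\ell$-, $j$-, $\rho$-dependence there was entirely carried by the commutator lemmas and the inequality $\betam\le\rho^{-1}$, not by the underlying elliptic estimate.
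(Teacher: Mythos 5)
Your overall skeleton (localize with $\bw=\chi r^{\betam}\dbeta\bu$, $q=\chi r^{\betam}\dbeta p$, estimate the commutators via Lemmas \ref{lemma:comm1}--\ref{lemma:comm2}, use $\betam\le\rho^{-1}$, sum over $\betam=\ell$) is the right one, but the place where you deviate from the paper is exactly where your argument develops a gap. The paper's proof consists of repeating the proof of Proposition \ref{th:stokes-estimate} with a single substitution: the a~priori estimate applied to the \emph{localized, compactly supported} pair $(\bw,q)$ in \eqref{eq:first-weighted} is \eqref{eq:homog-estimate-bis-2} (the $\ell=0$ case of Theorem \ref{th:stokes-homogeneous-reg-2}) instead of \eqref{eq:homog-estimate-2}. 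Since $\bw,q$ vanish outside $B_{R-j\rho}$ they extend by zero to $K$, the order-$0$ hypotheses of Theorem \ref{th:stokes-homogeneous-reg-2} ($\bw\in\bcK^{1,s}_{\gamma}(K)$, $q\in\cK^{0,s}_{\gamma-1}(K)$, local $W^{2,s}\times W^{1,s}$ smoothness away from the corner) follow from the assumed $\bcK^{\ell+1,s}_{\gamma}\times\cK^{\ell,s}_{\gamma-1}$ regularity of $(\bu,p)$, and the extra lower-order terms $\sum_{\alpham\le1}\|r^{\alpham-\gamma}\dalpha\bw\|_{L^s}+\|r^{1-\gamma}q\|_{L^s}$ on the right of \eqref{eq:homog-estimate-bis-2} are, after commuting, of exactly the type already present on the right of \eqref{eq:third-weighted} and are absorbed there.

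Your route instead applies Theorem \ref{th:stokes-homogeneous-reg-2} at full order $\ell$ to $(\bu,p)$ on the truncated sector, and then keeps the estimate \eqref{eq:homog-estimate-2} for $(\bw,q)$. Two problems. First, Theorem \ref{th:stokes-homogeneous-reg-2} is stated on the infinite sector $K$ with data on all of $K$; you acknowledge that applying it on $S_{R-j\rho}$ requires an extension or a localization, but neither is carried out, and the conclusion $\bu\in\bcK^{\ell+2,s}_{\gamma}(S_{R-j\rho})$ up to the artificial outer boundary is genuinely delicate (the data are only prescribed on the truncated sector, so ``extend by the sector-Stokes solution with the same data'' is not available). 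Second, and more decisively, your subsequent appeal to Theorem \ref{th:stokes-homogeneous-reg} for $(\bw,q)$ requires the spectral condition that the line $\Re\lambda=\gamma-2/s$ avoid the eigenvalues of $\mathfrak{A}(\lambda)$; Proposition \ref{th:stokes-estimate-bis} is stated for \emph{arbitrary} $\gamma\in\mathbb{R}$ and carries no such hypothesis — this is precisely the point of switching to \eqref{eq:homog-estimate-bis-2}, which holds without any condition on $\gamma$ at the price of the lower-order terms. As written, your proof would only establish the proposition under an additional restriction on $\gamma-2/s$ that is not part of the statement. The repair is the paper's one-line substitution described above.
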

\begin{proof}
The proof follows the same steps as the proof of Proposition
\ref{th:stokes-estimate}, with \eqref{eq:homog-estimate-bis-2} replacing
\eqref{eq:homog-estimate-2} in \eqref{eq:first-weighted}.
\end{proof}
\subsection{Analytic regularity shift for the quadratic nonlinearity in a sector}
\label{sec:nonlin-term}
To handle the regularity shift for the quadratic nonlinearity 
   in the weighted scales $\cK^{\ell,s}_{\gamma}$,
we apply a technique inspired by \cite{DallAcqua2012} and used also in \cite{nonlinear},
for different problems but similar nonlinearities. 
We derive interpolation inequalities in weighted spaces and 
use them to bound the quadratic nonlinearity
in the momentum transport term of the Navier-Stokes system in 
the homogeneous, corner-weighted norms.
\subsubsection{Induction hypotheses}
\label{sec:IndHyp}
For 
$1 <s <\infty$, $\gamma \in \mathbb{R}$, $j\in \mathbb{N}_0$, 
$0< R< 1$ and $C_{\bu}, A_{\bu}, C_p,A_p>0$,
we introduce two statements on the boundedness of high order derivatives of
weak solutions with quantitative control of their size in weighted
Kondrat'ev spaces. 
We shall say that
\emph{$H_{\bu}(s, \gamma, j, C_{\bu}, A_{\bu})$ holds in $S_R$} 
if 
for all $\rho\in (0, R/(2j)]$,
  \begin{subequations}
  \begin{equation}
    \label{eq:induction-u}
    \left| \bu \right|_{\bcK^{\ell,s}_\gamma(S_{R-j\rho})} 
     \leq C_{\bu} A_{\bu}^\ell (j\rho)^{-\ell} \ell^\ell \text{ for all }\ell =0, \dots, j
  \end{equation}
  and we shall say that \emph{$H_p(s,\gamma,  j, C_p, A_p)$ holds in $S_R$} 
  if for all $\rho\in (0, R/(2j)]$,
  \begin{equation}
    \label{eq:induction-p}
    \left| p \right|_{\cK^{\ell-1,s}_{\gamma-1}(S_{R-j\rho})} 
    \leq C_{p} A_{p}^\ell (j\rho)^{-\ell} \ell^\ell \text{ for all }\ell=1, \dots, j.
  \end{equation} 
\end{subequations}
For $k\in \mathbb{N}$, we state the induction hypothesis succinctly as
\begin{equation}
  \label{eq:induction}
  H_{\bu,p}(s,\gamma,k,C_{\bu,p}, A_{\bu,p}):\quad
    \begin{cases}
H_{\bu}(s,\gamma, j,C_{\bu}, A_{\bu}) 
\text{ holds for }j=0, \dots, k
\\
H_p(s,\gamma, j, C_p, A_p)\text{ holds for }j=1, \dots, k.
  \end{cases}
\end{equation}

\subsubsection{Weighted interpolation estimates}
\label{sec:Weighted}
We introduce interpolation estimates for weighted norms in sectors
near the origin, analogous to the estimates used to establish regularity
in classical Sobolev spaces, see, e.g., \cite{Adams2003}. 
The derivation of these estimates is 
based on the homogeneity of the weighted norms (which is
a consequence of the homogeneous Dirichlet boundary conditions), 
a dyadic decomposition of the domain around the point singularity
combined with an homothety to a reference annulus of unit size.

\begin{lemma}
  \label{lemma:thetaprod}
  Let  $0 <D_0 \leq D \leq 1$ and let $S_D$ be defined as in \eqref{eq:sectors}, $\beta \in \mathbb{N}_0^2$ such
  that $\betam > 0$, $v\in \mathcal{K}^{\betam+1,
    s}_\gamma(S_D)$, $\gamma-2/s > \xi -2/t$, and $2< s \leq t \leq \infty$. Then, the following ``interpolation'' estimate holds
  \begin{equation}
  \label{eq:thetaprod}
  \|r^{-\xi+|\beta|}\dbeta v \|_{L^{t}(S_D)} 
  \leq 
  C\|r^{\betam-\gamma} \dbeta v\|^{1-\theta}_{L^s(S_D)}
  \sum_{\alpham \leq 1}  
  \betam^{(1-\alpham)\theta}\| r^{\betam+\alpham-\gamma} \partial^{\beta+\alpha} v \|^\theta_{L^s(S_D)}
  \end{equation}
  with $\theta = 2/s - 2/t$
  and for $C$
  dependent on $s$, $t$, $\gamma$, $\xi$ and $D_0$, but independent of $\betam$, and of $v$.
\end{lemma}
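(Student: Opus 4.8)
The plan is to prove the estimate first on a fixed reference annulus by a standard Gagliardo--Nirenberg interpolation inequality, then transfer it to the dyadic annuli around the corner by scaling, and finally sum the contributions over the dyadic decomposition of $S_D$. First I would fix a reference annulus $\mathcal{A} = \{ y \in K : 1/2 < |y| < 2 \}$ (intersected with the sector), and recall that by the Gagliardo--Nirenberg--Sobolev inequality on the Lipschitz domain $\mathcal{A}$, for $w \in W^{1,s}(\mathcal{A})$ with $2 < s \le t \le \infty$ and $\theta = 2/s - 2/t \in [0,1)$ one has
\begin{equation*}
  \| w \|_{L^t(\mathcal{A})} \le C \| w \|_{L^s(\mathcal{A})}^{1-\theta} \| w \|_{W^{1,s}(\mathcal{A})}^{\theta},
\end{equation*}
with $C = C(s,t)$; note $t = \infty$ is admissible here precisely because $s > 2$, which is where the hypothesis $s > 2$ enters. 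I would apply this with $w = \dbeta v$ restricted to a scaled copy of $\mathcal{A}$.

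Next I would set up the dyadic decomposition: write $D_k = 2^{-k} D$ and $\mathcal{A}_k = \{ x \in K : D_{k+1} < |x| < D_{k-1}\}$ (a finite or countable family of annuli covering $S_D$ with bounded overlap), and for each $k$ rescale $x = D_k y$ so that $\mathcal{A}_k$ becomes (a fixed dilate of) $\mathcal{A}$. Under this homothety, using $r(x) = D_k |y| \simeq D_k$ on $\mathcal{A}_k$, the weighted $L^s$ norm $\| r^{\betam - \gamma} \dbeta v \|_{L^s(\mathcal{A}_k)}$ becomes $D_k^{-\gamma + 2/s} \| \cdot \|_{L^s(\mathcal{A})}$ of the pulled-back derivative, and similarly the $L^t$ norm picks up a factor $D_k^{-\xi + 2/t}$ after matching with the weight $r^{-\xi + |\beta|}$ on the left. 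Feeding the scaled function into the reference inequality and unwinding, the $\betam$-dependent factors $\betam^{(1-\alpham)\theta}$ arise from estimating the full gradient of $\dbeta v$ on the reference annulus in terms of $\dbeta v$ and $\partial^{\beta+\alpha} v$ with $\alpham \le 1$ — here one uses that $|\partial^\alpha(r^{\betam})| \lesssim \betam\, r^{\betam - 1}$ on the annulus, so that differentiating the weight once costs a factor $\betam$, exactly matching the exponent $(1-\alpham)\theta$ when $\alpham = 0$.

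The main obstacle is the bookkeeping of the scaling exponents together with the summation over $k$: one must check that the leftover power of $D_k$ is nonnegative (or at least summable) so that $\sum_k$ of the local estimates is controlled by the global right-hand side. This is where the condition $\gamma - 2/s > \xi - 2/t$ is used — it guarantees that the net exponent of $D_k$ appearing after combining the $(1-\theta)$ power of the $L^s$ weight, the $\theta$ power of the $L^s$ weight on the gradient term, and the $L^t$ weight on the left is such that $\sum_k D_k^{\,(\text{that exponent}) \cdot s}$ converges (geometric series), with the sum bounded by a constant depending on $D_0$ through the largest dyadic scale. One further technical point is to pass from the $\ell^s$-summed inequality $\sum_k (\text{LHS}_k)^t \lesssim (\sum_k \cdots)$ back to the product form: since $t \ge s$ one has $\ell^s \hookrightarrow \ell^t$, so $\big(\sum_k \|\cdot\|_{L^t(\mathcal{A}_k)}^t\big)^{1/t} \le \big(\sum_k \|\cdot\|_{L^t(\mathcal{A}_k)}^s\big)^{1/s}$, and then a discrete Hölder inequality with exponents $1/(1-\theta)$ and $1/\theta$ on the product of the two sums lets one factor the right-hand side back into the claimed product of a $(1-\theta)$-power and a $\theta$-power of global norms. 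Finally I would note that the $\betam > 0$ hypothesis ensures the sum over $\alpham \le 1$ on the right genuinely includes the nontrivial gradient term, and that $v \in \mathcal{K}^{\betam+1,s}_\gamma(S_D)$ is exactly the regularity needed for all quantities appearing to be finite.
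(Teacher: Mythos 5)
Your proposal is correct and follows essentially the same route as the paper: a dyadic decomposition of $S_D$ into sectorial annuli, rescaling each to a reference annulus where the weights are bounded above and below, applying the Gagliardo--Nirenberg interpolation inequality there (with $s>2$ admitting $t=\infty$), and summing a geometric series whose convergence is exactly the condition $\gamma-2/s>\xi-2/t$. The only cosmetic difference is that you recombine the dyadic pieces via $\ell^s\hookrightarrow\ell^t$ and a discrete H\"older inequality, whereas it suffices to majorize each local $L^s$ norm on the right-hand side by the corresponding global norm before summing, as the paper implicitly does.
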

\begin{proof}
  We consider the case where $D= 1$, the general one will follow by scaling and
  the fact that $D\geq D_0$. For $j\in \mathbb{N}_0$, we
  introduce the dyadic annuli
  \begin{equation*}
    V^j = \left\{ x\in\mathbb{R}^2: 2^{-j-1} < |x| < 2^{-j} \right\} 
  \end{equation*}
  and their sectorial intersections
  \begin{equation*}
    S^j = S_1 \cap V^j.
  \end{equation*}
We also introduce the homothetic dilation $\Phi_j: V^j \to V^0: x \mapsto 2^jx$
and denote with a hat the scaled quantities, e.g., $\hv = v \circ \Phi_j^{-1}$.
Then, for all $j\in \mathbb{N}$,
\begin{equation}
  \label{eq:interp-proof1}
   \|r^{-\xi+|\beta|}\dbeta v \|_{L^{t}(S^j)}
   \leq
   2^{-j(-\xi+2/t)}\|\hr^{-\xi+|\beta|}\hdbeta \hv \|_{L^{t}(S^0)}.
\end{equation}
On $S^0$ we have $1/2 < \hr < 1$, then, 
\begin{equation}
  \label{eq:interp-proof2}
  \|\hr^{-\xi+|\beta|}\hdbeta \hv \|_{L^{t}(S^0)}
  \leq
  2^{-|\xi|}\|\hr^{|\beta|}\hdbeta \hv \|_{L^{t}(S^0)}.
\end{equation}
Furthermore, since $S^0$ satisfies the cone condition \cite{Adams2003}, 
there holds the interpolation estimate
\begin{equation*}
  \| v \|_{L^{t}(S^0)} \leq C \| v \|^\theta_{L^s(S^0)} \| v \|^{1-\theta}_{W^{1,s}(S^0)},
\end{equation*}
valid for all $v\in W^{1,s}(S^0)$, with $\theta = 2/s - 2/t \in [0,1)$. 
From \eqref{eq:interp-proof1}, \eqref{eq:interp-proof2} and 
from this interpolation estimate we obtain
\begin{equation*}
   \|r^{-\xi+|\beta|}\dbeta v \|_{L^{t}(S^j)}
   \leq
   C2^{-j(-\xi+2/t)}\|\hr^{|\beta|}\hdbeta \hv \|_{L^{ s}(S^0)}^{1-\theta}
   \sum_{\alpham \leq 1}\|\hdalpha \hr^{|\beta|}\hdbeta \hv \|_{L^{ s}(S^0)}^\theta.
\end{equation*}
By an elementary manipulation, using again the fact that $1/2 < \hr < 1$,
and possibly adjusting the constant $C$,
\begin{multline} \label{eq:interp-proof4}
   \|r^{-\xi+|\beta|}\dbeta v \|_{L^{t}(S^j)} \\ \leq 
   C2^{-j(\xi+2/t)}\|\hr^{|\beta|-\gamma}\hdbeta \hv \|_{L^{ s}(S^0)}^{1-\theta}
   \sum_{\alpham \leq 1}
   \betam^{(1-\alpham)\theta}\| \hr^{|\beta|+\alpham -\gamma}\hdalpha\hdbeta \hv \|_{L^{ s}(S^0)}^\theta.
\end{multline}
Scaling back from $S^0$ to $S^j$,
\begin{multline*}
  \|r^{-\xi+|\beta|}\dbeta v \|_{L^{t}(S^j)}
  \\
  \leq 
   C 2^{-j(-\xi+2/t + \gamma-2/s)}
\|r^{|\beta|-\gamma}\dbeta v \|_{L^{ s}(S^j)}^{1-\theta}
  \sum_{\alpham \leq 1}\betam^{(1-\alpham)\theta}\| 
   r^{|\beta|+\alpham -\gamma}\dalpha\dbeta v \|_{L^{ s}(S^j)}^\theta.
\end{multline*}
If $\gamma -2/s > \xi -2/t$, then, we can sum over all $S^j$ and obtain the assertion.
\end{proof}
\begin{remark}
  The constant $C$ in \eqref{eq:thetaprod} depends on $s$, $t$, $\gamma$, and $\xi$. 
  Nonetheless,
  it will be used only for specific values of $s$, $t$, $\gamma$, and $\xi$, thus
  this dependence will not be relevant in the following proofs. 
  Specifically,
  under the hypothesis of Lemma \ref{lemma:thetaprod}, we will use the inequalities
  \begin{equation}
  \label{eq:thetaprod-2s}
  \|r^{\frac{1-\gamma}{2}+|\beta|}\dbeta v \|_{L^{2s}(S_D)} \leq C\|r^{\betam-\gamma} \dbeta v\|^{1-\theta}_{L^s(S_D)}
  \sum_{\alpham \leq 1}  \betam^{(1-\alpham)\theta}\| r^{\betam+\alpham-\gamma} \partial^{\beta+\alpha} v \|^\theta_{L^s(S_D)}
  \end{equation}
   valid for $\gamma - 2/s>-1$ and with $\theta  = 1/s$, and
\begin{equation}
  \label{eq:thetaprod-infty}
  \|r^{1+|\beta|}\dbeta v \|_{L^{\infty}(S_D)} \leq C\|r^{\betam-\gamma} \dbeta v\|^{1-\eta}_{L^s(S_D)}
  \sum_{\alpham \leq 1}  \betam^{(1-\alpham)\eta}\| r^{\betam+\alpham-\gamma} \partial^{\beta+\alpha} v \|^\eta_{L^s(S_D)},
  \end{equation}
  valid for $\gamma -2/s > -1$ and with $\eta = 2/s$.
\end{remark}
\subsubsection{Estimate of the nonlinear term}
We can now give an estimate of the norm of the nonlinear term, under the
assumption that the induction hypothesis holds. 
Specifically, this will be done in Lemma \ref{lemma:nonlin-est}. 
We start by introducing three auxiliary lemmas,
that will subsequently be necessary for the proof of Lemma \ref{lemma:nonlin-est}.
\begin{lemma}
  \label{lemma:nonlin1}
  Let $k\in \mathbb{N}$, $0< R<1$, and $2<s<\infty$, $\gamma$ such that $ \gamma -
  2/s>-1$.
    Let $\bu$ be such that $H_{\bu}(s, \gamma, k, C_{\bu}, A_{\bu})$ holds
    in $S_R$.
 Then, there exists $C>0$ independent of $k$, $A_{\bu}$, and $\rho$ such that
there holds, for all integer $0\leq \ell \leq k-1$ and with $\theta = 1/s$,
\begin{equation}
\sum_{\betam=\ell}\| r^{\frac{1-\gamma}{2}+\betam} \dbeta \bu \|_{L^{2 s} (S_{R-k\rho})} 
\leq 
C A_{\bu}^{\ell+\theta} (k\rho)^{-\ell-\theta} (\ell+1)^{\ell + \theta}
\;.
\end{equation}
\end{lemma}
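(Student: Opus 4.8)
The plan is to reduce the estimate, via the weighted interpolation inequality \eqref{eq:thetaprod-2s}, to the finite‑order seminorm bounds supplied by the induction hypothesis $H_{\bu}$, all evaluated on the single sector $S_{R-k\rho}$. Fix $\rho\in(0,R/(2k)]$; then $R-k\rho\ge R/2$, so $S_{R-k\rho}$ is of the form $S_D$ with $R/2\le D\le 1$, and \eqref{eq:thetaprod-2s} (i.e.\ Lemma~\ref{lemma:thetaprod}) may be applied on it with $D_0=R/2$, the resulting constant depending only on $s$, $\gamma$ and $R$. The regularity $\bu\in\bcK^{\ell+1,s}_\gamma(S_{R-k\rho})$ needed to invoke \eqref{eq:thetaprod-2s} (with $\betam=\ell\le k-1$, so $\ell+1\le k$) is contained in $H_{\bu}(s,\gamma,k,C_{\bu},A_{\bu})$. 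Since the standing assumption on $\gamma$ is precisely $\gamma-2/s>-1$, for each multiindex $\beta$ with $\betam=\ell\ge1$ I would apply \eqref{eq:thetaprod-2s} componentwise with $t=2s$, $\theta=1/s$, obtaining
\begin{multline*}
  \|r^{\frac{1-\gamma}{2}+\ell}\dbeta\bu\|_{L^{2s}(S_{R-k\rho})}
  \\ \le C\,\|r^{\ell-\gamma}\dbeta\bu\|_{L^s(S_{R-k\rho})}^{1-\theta}
  \Bigl(\ell^{\theta}\|r^{\ell-\gamma}\dbeta\bu\|_{L^s(S_{R-k\rho})}^{\theta}
  +\sum_{\alpham=1}\|r^{\ell+1-\gamma}\partial^{\beta+\alpha}\bu\|_{L^s(S_{R-k\rho})}^{\theta}\Bigr);
\end{multline*}
the case $\ell=0$ is handled by the same argument using the (simpler) version of this interpolation inequality that does not need the restriction $\betam>0$, in which case the first term on the right is absent.

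I would then sum over the $\ell+1$ multiindices $\beta$ with $\betam=\ell$. The contribution of the first summand is exactly $\ell^{\theta}|\bu|_{\bcK^{\ell,s}_\gamma(S_{R-k\rho})}$; for the second, for each of the two $\alpha$ with $\alpham=1$ I would use the discrete H\"older inequality $\sum_\beta a_\beta^{1-\theta}b_\beta^{\theta}\le(\sum_\beta a_\beta)^{1-\theta}(\sum_\beta b_\beta)^{\theta}$ together with the observation that $\beta\mapsto\beta+\alpha$ maps $\{\betam=\ell\}$ injectively into $\{|\mu|=\ell+1\}$, so that $\sum_{\betam=\ell}\|r^{\ell+1-\gamma}\partial^{\beta+\alpha}\bu\|_{L^s(S_{R-k\rho})}\le|\bu|_{\bcK^{\ell+1,s}_\gamma(S_{R-k\rho})}$. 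This yields
\begin{equation*}
  \sum_{\betam=\ell}\|r^{\frac{1-\gamma}{2}+\ell}\dbeta\bu\|_{L^{2s}(S_{R-k\rho})}
  \le C\Bigl(\ell^{\theta}|\bu|_{\bcK^{\ell,s}_\gamma(S_{R-k\rho})}
  +|\bu|_{\bcK^{\ell,s}_\gamma(S_{R-k\rho})}^{1-\theta}\,|\bu|_{\bcK^{\ell+1,s}_\gamma(S_{R-k\rho})}^{\theta}\Bigr).
\end{equation*}

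Finally, since $0\le\ell\le k-1$ both $\ell$ and $\ell+1$ lie in $\{0,\dots,k\}$, so $H_{\bu}(s,\gamma,k,C_{\bu},A_{\bu})$ bounds $|\bu|_{\bcK^{\ell,s}_\gamma(S_{R-k\rho})}$ by $C_{\bu}A_{\bu}^{\ell}(k\rho)^{-\ell}\ell^{\ell}$ and $|\bu|_{\bcK^{\ell+1,s}_\gamma(S_{R-k\rho})}$ by $C_{\bu}A_{\bu}^{\ell+1}(k\rho)^{-\ell-1}(\ell+1)^{\ell+1}$; note also $\rho\le R/(2k)$ gives $k\rho\le R/2<1$. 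Substituting and collecting powers, the second term is $\le CA_{\bu}^{\ell+\theta}(k\rho)^{-\ell-\theta}\,\ell^{\ell(1-\theta)}(\ell+1)^{\theta(\ell+1)}$ and the first $\le CA_{\bu}^{\ell}(k\rho)^{-\ell}\ell^{\ell+\theta}$ (with $C$ now also absorbing $C_{\bu}$); the claimed bound then follows from the elementary inequalities $(\ell/(\ell+1))^{\ell(1-\theta)}\le1$, $\ell\le\ell+1$, $k\rho<1$, and (for the first term) the absorption of the harmless factor $A_{\bu}^{-\theta}$, e.g.\ since one may assume $A_{\bu}\ge1$. I expect the only step requiring genuine care to be this last bookkeeping -- in particular matching the factor $\ell^{\ell}$ against $(\ell+1)^{\ell+1}$ across the $\theta/(1-\theta)$ split -- and checking that every constant produced is indeed independent of $k$, $\rho$ and $A_{\bu}$.
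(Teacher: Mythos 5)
Your proposal is correct and follows essentially the same route as the paper: apply the weighted interpolation inequality \eqref{eq:thetaprod-2s} on $S_{R-k\rho}$, sum over $\betam=\ell$, invoke the induction hypothesis $H_{\bu}$ at orders $\ell$ and $\ell+1$, and collect powers using $k\rho<1$ and $A_{\bu}\geq 1$. Your treatment of the summation over $\beta$ (via discrete H\"older and the injectivity of $\beta\mapsto\beta+\alpha$) and of the degenerate case $\ell=0$ is in fact slightly more explicit than the paper's, but the argument is the same.
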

\begin{proof}
  Let $\bu = (u_1, u_2)$ and denote $v = u_i$, for any $i=1,2$.
 We start by applying Lemma \ref{lemma:thetaprod}, 
 using the bound \eqref{eq:thetaprod-2s}: 
 \begin{multline}
   \label{eq:nonlin1-proof1}
   \|r^{\frac{1-\gamma}{2}+|\beta|}\dbeta v \|_{L^{2s}(S_{R-k\rho})}
   \\
   \leq
   C\|r^{\betam-\gamma} \dbeta v\|^{1-\theta}_{L^s(S_{R-k\rho})}
  \sum_{\alpham \leq 1}  \betam^{(1-\alpham)\theta}
      \| r^{\betam+\alpham-\gamma} \partial^{\beta+\alpha} v \|^\theta_{L^s(S_{R-k\rho})},
 \end{multline}
with $C$ independent of $\betam$ and $v$.
 Then, from \eqref{eq:induction-u} we have that for all $1\leq \ell\leq k-1$
 \begin{equation*}
\sum_{\betam=\ell}   \|r^{\ell-\gamma} \dbeta v\|^{1-\theta}_{L^s(S_{R-k\rho})} 
      \leq C_{\bu} A_{\bu}^{(1-\theta)\ell} (k\rho)^{-(1-\theta)\ell}\ell^{(1-\theta)\ell}
 \end{equation*}
 and 
 \begin{multline*}
\sum_{\betam=\ell}  \sum_{\alpham \leq 1}  
   \ell^{(1-\alpham)\theta}\| r^{\ell+\alpham-\gamma} \partial^{\beta+\alpha} v \|^\theta_{L^s(S_{R-k\rho})}
\\
  \leq
  C_{\bu} A_{\bu}^{\theta\ell} (k\rho)^{-\theta\ell}\ell^{\theta(\ell+1)} +
C_{\bu}  A_{\bu}^{\theta(\ell+1)} (k\rho)^{-\theta(\ell+1)}(\ell+1)^{\theta(\ell+1)}, 
 \end{multline*}
 hence
 \begin{equation*}
\sum_{\betam=\ell}   \|r^{\frac{1-\gamma}{2}+\ell}\dbeta v \|_{L^{2s}(S_{R-k\rho})}
   \leq
  C A_{\bu}^{\ell + \theta} (k\rho)^{-\ell - \theta}(\ell+1)^{\ell+\theta},
 \end{equation*}
 which is what was to be proved.
\end{proof}
\begin{lemma}
    \label{lemma:nonlin2}
    Under the same hypotheses as in Lemma \ref{lemma:nonlin1},
 there exists $C>0$ independent of $k$, $A_{\bu}$,  and $\rho$ such that 
      there holds for all $0\leq \ell\leq k-2$ and with $\theta = 1/s$
    \begin{equation}
   \sum_{\betam=\ell}\| r^{\frac{1-\gamma}{2}+1+\betam} \dbeta \nabla \bu \|_{L^{2 s} (S_{R-k\rho})} 
\leq C A_{\bu}^{\ell+1+\theta} (k\rho)^{-\ell-1-\theta} (\ell+2)^{\ell+1+\theta}
    \;.
\end{equation}
\end{lemma}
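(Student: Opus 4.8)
The plan is to \emph{obtain Lemma \ref{lemma:nonlin2} as a direct consequence of Lemma \ref{lemma:nonlin1}, by re-indexing the differentiation order}. The observation is that differentiating $\nabla\bu$ by $\dbeta$ with $\betam=\ell$ produces only derivatives of $\bu$ of total order $\ell+1$, while the weight $r^{\frac{1-\gamma}{2}+1+\betam}$, at $\betam=\ell$, carries precisely the exponent $\frac{1-\gamma}{2}+(\ell+1)$ appearing in the conclusion of Lemma \ref{lemma:nonlin1} for differentiation order $\ell+1$.

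First I would expand $\dbeta\nabla\bu$ componentwise: for $\betam=\ell$ its entries are the quantities $\partial_j\dbeta u_i$ ($i,j\in\{1,2\}$), each of which is a partial derivative of $u_i$ of total order $\ell+1$. Bounding the matrix norm by the sum of the moduli of the entries and summing over all $\betam=\ell$, I would use that in two space dimensions every multi-index $\zeta$ with $\zetam=\ell+1$ arises as $\beta$ plus a coordinate unit multi-index for at most two admissible pairs; hence, with $C$ absorbing this factor and the (harmless) equivalence between the vector norm of $\dbeta\bu$ and the sum of the norms of its components,
\begin{equation*}
  \sum_{\betam=\ell}\| r^{\frac{1-\gamma}{2}+1+\betam} \dbeta \nabla \bu \|_{L^{2 s} (S_{R-k\rho})}
  \le C \sum_{\zetam=\ell+1}\| r^{\frac{1-\gamma}{2}+\zetam} \dzeta \bu \|_{L^{2 s} (S_{R-k\rho})}.
\end{equation*}

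Next, since $0\le\ell\le k-2$ forces $1\le\ell+1\le k-1$, I would apply Lemma \ref{lemma:nonlin1} verbatim — its hypotheses being identical to those assumed here — with its differentiation order $\ell$ replaced by $\ell+1$, which yields
\begin{equation*}
  \sum_{\zetam=\ell+1}\| r^{\frac{1-\gamma}{2}+\zetam} \dzeta \bu \|_{L^{2 s} (S_{R-k\rho})}
  \le C A_{\bu}^{\ell+1+\theta} (k\rho)^{-(\ell+1)-\theta} (\ell+2)^{(\ell+1)+\theta}, \qquad \theta=1/s.
\end{equation*}
Combining the two displays and rewriting $(\ell+1)+\theta=\ell+1+\theta$ gives exactly the asserted bound, with a constant $C$ that inherits from Lemma \ref{lemma:nonlin1} the independence of $k$, $A_{\bu}$ and $\rho$.

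There is no serious obstacle here: the whole content is the re-indexing, and the only point requiring a little care is the bookkeeping — that a first-order derivative of $\bu$ weighted by $r^{\frac{1-\gamma}{2}+1+\betam}$ is a derivative of $\bu$ of order $\betam+1$ weighted by $r^{\frac{1-\gamma}{2}+(\betam+1)}$, and that the combinatorial multiplicity in passing from $\{(\beta,j):\betam=\ell\}$ to $\{\zeta:\zetam=\ell+1\}$ is bounded uniformly in $\ell$ (by $2$), so that it is absorbed into $C$ and does not perturb the $(\ell+2)^{\ell+1+\theta}$ growth.
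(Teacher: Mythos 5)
Your proposal is correct and is essentially the paper's own argument: the paper's proof consists of the single sentence that the assertion follows directly from Lemma \ref{lemma:nonlin1}, and you have simply made explicit the re-indexing $\ell\mapsto\ell+1$ (with the matching weight exponent and the uniformly bounded combinatorial multiplicity) that this entails.
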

\begin{proof}
  The assertion follows directly from Lemma \ref{lemma:nonlin1}.
\end{proof}
\begin{lemma}
  \label{lemma:nonlininfty}
     Under the same hypotheses as in Lemma \ref{lemma:nonlin1},
 there exists $C>0$ independent of $k$, $A_{\bu}$, and $\rho$ such that 
      there holds, for all $0\leq \ell\leq k-1$ and with $\eta  =2/s$,
    \begin{equation}
    \sum_{\betam=\ell}\| r^{1+\betam} \dbeta \bu \|_{L^{\infty} (S_{R-k\rho})} 
    \leq 
     C A_{\bu}^{\ell+\eta} (k\rho)^{-\ell-\eta} (\ell+1)^{\ell+\eta}
     \;.
\end{equation}
\end{lemma}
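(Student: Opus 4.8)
The plan is to mimic the proof of Lemma~\ref{lemma:nonlin1} almost verbatim, replacing the weighted interpolation bound \eqref{eq:thetaprod-2s} by its $L^\infty$ analogue \eqref{eq:thetaprod-infty} and the exponent $\theta = 1/s$ by $\eta = 2/s$; this is legitimate because the standing assumption $\gamma - 2/s > -1$ is exactly the range of validity of \eqref{eq:thetaprod-infty}. Writing $\bu = (u_1,u_2)$ and fixing $v = u_i$ for $i\in\{1,2\}$, it suffices to bound $\sum_{\betam=\ell}\|r^{1+\betam}\dbeta v\|_{L^\infty(S_{R-k\rho})}$, since the modulus of a component is dominated by that of $\bu$. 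First I would apply \eqref{eq:thetaprod-infty} to each $\beta$ with $\betam = \ell \ge 1$, which gives
\begin{equation*}
  \|r^{1+\betam}\dbeta v\|_{L^\infty(S_{R-k\rho})}
  \le C\,\|r^{\betam-\gamma}\dbeta v\|_{L^s(S_{R-k\rho})}^{1-\eta}
  \sum_{\alpham\le 1}\betam^{(1-\alpham)\eta}\,\|r^{\betam+\alpham-\gamma}\partial^{\beta+\alpha}v\|_{L^s(S_{R-k\rho})}^{\eta},
\end{equation*}
with $C$ uniform in $\betam$ and in $v$; the case $\ell = 0$ is handled separately and more simply, via the $\betam=0$ instance of the dyadic scaling argument underlying Lemma~\ref{lemma:thetaprod} (for which no power of $\betam$ is inserted) together with \eqref{eq:induction-u} at orders $0$ and $1$.

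Next I would sum this inequality over all $\beta$ with $\betam = \ell$. The summand with $\alpha = 0$ contributes $C\ell^\eta\sum_{\betam=\ell}\|r^{\ell-\gamma}\dbeta v\|_{L^s}\le C\ell^\eta\,|\bu|_{\cK^{\ell,s}_\gamma(S_{R-k\rho})}$, while for $\alpham = 1$ a discrete Hölder inequality with exponents $1/(1-\eta)$ and $1/\eta$ bounds the sum by $C\,|\bu|_{\cK^{\ell,s}_\gamma(S_{R-k\rho})}^{1-\eta}\,|\bu|_{\cK^{\ell+1,s}_\gamma(S_{R-k\rho})}^{\eta}$, using that each $\partial^{\beta+\alpha}v$ with $\betam = \ell$, $\alpham = 1$ is a derivative of order $\ell+1$ occurring at most once. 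Now I invoke the induction hypothesis $H_{\bu}(s,\gamma,k,C_{\bu},A_{\bu})$, i.e.\ \eqref{eq:induction-u} at orders $\ell$ and $\ell+1$ --- legitimate exactly when $\ell+1\le k$, which is the reason for the range $0\le\ell\le k-1$ --- and, assuming without loss of generality $A_{\bu}\ge 1$ (and recalling $k\rho\le R/2 < 1$), both contributions are dominated by $C\,C_{\bu}\,A_{\bu}^{(1-\eta)\ell+\eta(\ell+1)}(k\rho)^{-(1-\eta)\ell-\eta(\ell+1)}\ell^{(1-\eta)\ell}(\ell+1)^{\eta(\ell+1)}$. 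Since $(1-\eta)\ell+\eta(\ell+1) = \ell+\eta$ and $\ell^{(1-\eta)\ell}(\ell+1)^{\eta(\ell+1)}\le(\ell+1)^{\ell+\eta}$, absorbing $C_{\bu}$ and the numerical factors into a new constant $C$ --- which may depend on $s$, $\gamma$, $R_0$ and $C_{\bu}$, but not on $k$, $A_{\bu}$ or $\rho$ --- and summing over $i = 1,2$ gives the asserted bound.

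I do not anticipate any genuine difficulty: the argument is a routine variation of Lemma~\ref{lemma:nonlin1} and the work is entirely bookkeeping. The three points that need attention are (i) that the shift to order $\ell+1$ in \eqref{eq:induction-u} forces $\ell\le k-1$, matching the stated range; (ii) the elementary inequality $\ell^{(1-\eta)\ell}(\ell+1)^{\eta(\ell+1)}\le(\ell+1)^{\ell+\eta}$, which ensures that the admissible growth rate $(\ell+1)^{\ell+\eta}$ is not degraded by the Hölder step; and (iii) checking that no hidden $\betam$-dependence enters the constant --- which holds because the constant in Lemma~\ref{lemma:thetaprod} is uniform in $\betam$, and the number of multi-indices of a fixed order has already been folded into the seminorms $|\bu|_{\cK^{\ell,s}_\gamma}$ and $|\bu|_{\cK^{\ell+1,s}_\gamma}$ that appear after the splitting.
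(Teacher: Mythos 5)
Your proposal is correct and follows essentially the same route as the paper, whose proof of this lemma consists precisely of the instruction to repeat the proof of Lemma \ref{lemma:nonlin1} with \eqref{eq:thetaprod-2s} replaced by \eqref{eq:thetaprod-infty} and $\theta$ replaced by $\eta=2/s$. Your explicit H\"older step for the sum over $\betam=\ell$ and the check that $\ell^{(1-\eta)\ell}(\ell+1)^{\eta(\ell+1)}\leq(\ell+1)^{\ell+\eta}$ merely spell out bookkeeping the paper leaves implicit.
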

\begin{proof}
  The proof follows verbatim the steps as the proof of Lemma
  \ref{lemma:nonlin1}, with \eqref{eq:nonlin1-proof1} replaced by 
  \eqref{eq:thetaprod-infty} and $\theta$ replaced by $\eta$.
\end{proof}
The weighted seminorms of the nonlinear term in \eqref{eq:ns-P} can now be
controlled, subject to the induction hypothesis.
\begin{lemma}
  \label{lemma:nonlin-est}
  Let $0<R<1$, $k\in \mathbb{N}$, $2<s<\infty$, $\gamma \in \mathbb{R}$, 
  $\bu\in\mathcal{K}^{k}_\gamma(S_{R-k\rho})$ be 
  such that $\gamma-2/s>-1$ and assume that 
  $H_{\bu}(s,\gamma,k,C_{\bu},A_{\bu})$ holds in $S_R$.

  Then,
  $\left( \bu\cdot \nabla \right) \bu \in \cK^{k-1, s}_{\gamma-2}(S_{R-k\rho})$
  and there exists a constant $C>0$ independent of $k$ and $A_{\bu}$, such that 
  for all $\rho\in (0, R/(2k)]$ and for all $\ell\in \mathbb{N}_0$ such that $\ell \leq k-1$ 
  there holds the bound 
  \begin{equation}
  \label{eq:nonlin-est}
\sum_{\betam=\ell}
    \| r^{2-\gamma+\betam} \dbeta \left( \bu\cdot \nabla \right) \bu \|_{L^s(S_{R-k\rho})} \leq
    C A_{\bu}^{\ell+1+2\theta} (k\rho)^{-\ell-1-2\theta} (\ell+1)^{\ell+3/2}
  \end{equation}
  with $\theta = 1/s$.
\end{lemma}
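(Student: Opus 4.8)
The plan is to differentiate the momentum transport term with the Leibniz rule and to estimate each of the resulting products by Hölder's inequality, invoking the weighted interpolation bounds of Lemmas~\ref{lemma:nonlin1}, \ref{lemma:nonlin2} and \ref{lemma:nonlininfty}. Writing the $j$-th component as $((\bu\cdot\nabla)\bu)_j=\sum_{i=1}^2 u_i\,\partial_i u_j$ and applying $\dbeta$ with $\betam=\ell$ gives
\[
  \dbeta\big(u_i\,\partial_i u_j\big)=\sum_{\beta'\le\beta}\binom{\beta}{\beta'}\big(\partial^{\beta'}u_i\big)\big(\partial^{\beta-\beta'}\partial_i u_j\big),
\]
so that $\partial^{\beta-\beta'}\partial_i u_j$ is a derivative of $u_j$ of order $\ell-|\beta'|+1$, equivalently a derivative of $\nabla u_j$ of order $\ell-|\beta'|$. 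For a summand with $|\beta'|=a$ and $1\le a\le\ell-1$ I would split the weight as $r^{2-\gamma+\ell}=r^{\frac{1-\gamma}{2}+a}\,r^{\frac{1-\gamma}{2}+1+(\ell-a)}$ and use Hölder with exponents $(2s,2s)$, since $\tfrac1{2s}+\tfrac1{2s}=\tfrac1s$: the first factor is controlled, after summation over $|\beta'|=a$, by Lemma~\ref{lemma:nonlin1} at order $a$, the second by Lemma~\ref{lemma:nonlin2} at order $\ell-a$. This produces the matching powers $A_{\bu}^{(a+\theta)+((\ell-a)+1+\theta)}=A_{\bu}^{\ell+1+2\theta}$ and $(k\rho)^{-\ell-1-2\theta}$, $\theta=1/s$. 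For the two endpoint indices $a=0$ and $a=\ell$, where the reduced-order hypothesis of Lemma~\ref{lemma:nonlin2} (order $\le k-2$) need not hold for $\ell\le k-1$, I would instead use the Hölder split $L^\infty\cdot L^s$, bounding the low-order factor by Lemma~\ref{lemma:nonlininfty} and the high-order factor directly by \eqref{eq:induction-u} (legitimate because $H_{\bu}$ controls seminorms up to order $k$); since $\eta=2/s=2\theta$ this is again consistent with the claimed exponents, and the $a=0$ term contributes a clean $(\ell+1)^{\ell+1}$.

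Next I would reorganize the double sum. Reindexing $\beta=\beta'+\beta''$ and using the Vandermonde inequality $\binom{\beta'+\beta''}{\beta'}\le\binom{|\beta'|+|\beta''|}{|\beta'|}$ together with $\sum_{|\beta'|=a,\,\beta'\le\beta}\binom{\beta}{\beta'}=\binom{\betam}{a}$, the sum over $\betam=\ell$ of $\sum_{\beta'\le\beta}\binom{\beta}{\beta'}\|\cdot\|\,\|\cdot\|$ factorizes into $\sum_{a=0}^{\ell}\binom{\ell}{a}\cdot(\text{order-}a\text{ bound})\cdot(\text{order-}(\ell-a)\text{ bound})$, and inserting the bounds of the auxiliary lemmas reduces everything to a combinatorial estimate of the form
\[
  \sum_{a=0}^{\ell}\binom{\ell}{a}(a+1)^{a+\theta}\,(\ell-a+2)^{\ell-a+1+\theta}\ \le\ C\,(\ell+1)^{\ell+3/2},
\]
uniform in $\ell$. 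I would establish this by Stirling-type bounds: writing $(a+1)^{a+\theta}\lesssim e^{a}a!\,(a+1)^{\theta-1/2}$ and similarly for the other factor, one obtains $\binom{\ell}{a}(a+1)^{a+\theta}(\ell-a+2)^{\ell-a+1+\theta}\lesssim \ell^{\ell}\sqrt{\tfrac{\ell}{a(\ell-a)}}\,(a+1)^{\theta-1/2}(\ell-a+1)^{\theta+1/2}$, and then uses that $\sum_{a}\sqrt{\ell/(a(\ell-a))}$ is of order $\sqrt\ell$ (the discrete analogue of $\int_0^\ell (a(\ell-a))^{-1/2}\,da=\pi$) to absorb the remaining polynomial weights into the factor $(\ell+1)^{3/2}$. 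Summing over $i=1,2$, over $a=0,\dots,\ell$ and over $\ell=0,\dots,k-1$ then gives \eqref{eq:nonlin-est}, and since each seminorm is finite, the membership $(\bu\cdot\nabla)\bu\in\cK^{k-1,s}_{\gamma-2}(S_{R-k\rho})$ follows.

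The main obstacle is this combinatorial summation: one must verify that the Leibniz coefficients $\binom{\ell}{a}$ do not degrade the $(\ell+1)^{\ell}$-type growth from the auxiliary lemmas by more than the polynomial factor $(\ell+1)^{3/2}$, which hinges on the near-cancellation $\binom{\ell}{a}a^a(\ell-a)^{\ell-a}\sim\sqrt{\ell/(a(\ell-a))}\,\ell^\ell$ and on the summability of the weights it multiplies; this is precisely where $s>2$ enters (hence $\theta<1/2$, so $(a+1)^{\theta-1/2}$ is summable near $a=0$), alongside the standing assumption $\gamma-2/s>-1$ needed for the interpolation lemmas. A secondary point requiring care is the bookkeeping of the validity ranges of Lemmas~\ref{lemma:nonlin1}--\ref{lemma:nonlininfty}, which is what forces the $L^\infty\cdot L^s$ split precisely at the endpoints $a\in\{0,\ell\}$, the symmetric $L^{2s}\cdot L^{2s}$ split being available everywhere else.
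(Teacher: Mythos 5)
Your proposal follows essentially the same route as the paper's proof: Leibniz expansion, the weight split $r^{2-\gamma+\ell}=r^{\frac{1-\gamma}{2}+a}\,r^{\frac{1-\gamma}{2}+1+(\ell-a)}$ with H\"older in $L^{2s}\cdot L^{2s}$, Lemmas \ref{lemma:nonlin1}--\ref{lemma:nonlin2} for the bulk of the sum, an $L^\infty\cdot L^s$ split via Lemma \ref{lemma:nonlininfty} for the term where all derivatives fall on $\nabla\bu$, and the same Stirling/square-root-cancellation argument for $\sum_a\binom{\ell}{a}(\cdot)$. The only (harmless) deviations are that you also peel off the $a=\ell$ endpoint, which the paper keeps in the $L^{2s}$ sum since Lemma \ref{lemma:nonlin2} at order $0$ is still in range there, and that your attribution of the role of $s>2$ to the summability of $(a+1)^{\theta-1/2}$ is not really where that hypothesis bites (it is needed for the interpolation lemmas and, decisively, in Lemma \ref{lemma:induction-step}).
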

\begin{proof}
Let $\bu = (u_1, u_2)$ and $\ell\in \mathbb{N}$,
$\beta\in\mathbb{N}^2_0$ such that
$\betam = \ell \leq k-1$.
By the Leibniz rule and the H\"older inequality, for $m,n\in \{1,2\}$,
\begin{equation}
  \label{eq:ugu-proof1}
\begin{split}
  & \| r^{2-\gamma+\betam} \dbeta u_m\partial_m u_n \|_{L^s(S_{R-k\rho})}
  \\
  & \quad \leq \sum_{j = 0}^{\betam-1} \sum_{\substack{\alpham = j\\\alpha\leq\beta}}
    \binom{\beta}{\alpha} \|r^{\frac{1-\gamma}{2}+\betam-\alpham}\partial^{\beta-\alpha} u_m \|_{L^{2 s}(S_{R-k\rho})}\|r^{\frac{1-\gamma}{2}+
                     1+\alpham}\dalpha \partial_mu_n \|_{L^{2 s}(S_{R-k\rho})}
  \\
& \qquad \qquad
  + \| r^{2-\gamma +\betam }u_m \dbeta \partial_m u_n\|_{L^s(S_{R-k\rho})} .
\end{split}
\end{equation}
We start by considering the sum in the right hand side of the above inequality.
In the inequality \eqref{eq:product-inequ} we will use the fact that, for two quantities $a, b:\mathbb{N}^2_0\to \mathbb{R}$, 
there holds 
  \begin{align*}
    \sum_{\betam = \ell} \sum_{j=0}^{\ell-1} \sum_{\substack{\alpham=j\\\alpha\leq\beta}} a(\alpha)b(\beta-\alpha)
    &=
    \sum_{j=0}^{\ell-1}\sum_{\betam = \ell}  \sum_{\substack{\alpham=j\\\alpha\leq\beta}} a(\alpha)b(\beta-\alpha)
    =
    \sum_{j=0}^{\ell-1}\sum_{\alpham = j}  \sum_{\substack{\betam=\ell\\\beta\geq\alpha}} a(\alpha)b(\beta-\alpha)\\
    &=
    \sum_{j=0}^{\ell-1}\sum_{\alpham = j}  \sum_{\xim = \ell-j} a(\alpha)b(\xi) 
     \;.
  \end{align*}
With this summation identity, and due
to Lemma \ref{lemma:nonlin1}, Lemma \ref{lemma:nonlin2}, and Stirling's inequality,
there exists constants $C_i$, $i=1,2,3,4$ independent of
$\ell$, $k$, $\rho$ and $A_{\bu}$, such that
\begin{equation}
  \label{eq:product-inequ}
\begin{aligned}
&\sum_{\betam=\ell}\sum_{j=0}^{\ell-1} \sum_{\substack{\alpham=j\\\alpha\leq\beta}}\binom{\beta}{\alpha} \| r^{\frac{1-\gamma}{2}+\betam-\alpham} \partial^{\beta-\alpha} u_m \|_{L^{2 s} (S_{R-k\rho})} \| r^{\frac{1-\gamma}{2}+1+\alpham} \dalpha \partial_m u_n \|_{L^{2 s} (S_{R-k\rho})} \\
  &\qquad \leq \sum_{j=0}^{\ell-1} \binom{\ell}{j}\sum_{\alpham=j}\sum_{\xim=\ell-j} \| r^{\frac{1-\gamma}{2}+\xim}
    \dxi u_m \|_{L^{2 s} (S_{R-k\rho})} \| r^{\frac{1-\gamma}{2}+1+\alpham} \dalpha \partial_m u_n \|_{L^{2 s} (S_{R-k\rho})}\\
&\qquad\leq C_{1} A_{\bu}^{\ell +1+2\theta} \rho^{-\ell-1-2\theta} \sum_{j=0}^{\ell-1} \binom{\ell}{j} \frac{(\ell-j +1)^{\ell-j+\theta} (j+2)^{j +1+\theta}}{k^{\ell+1+2\theta}}\\
  &\qquad\leq
      C_{2} A_{\bu}^{\ell +1+2\theta} \rho^{-\ell-1-2\theta}
      \sum_{j=0}^{\ell-1} \binom{\ell}{j} (j+1)!(\ell - j)! e^\ell \frac{(j+2)^\theta (\ell-j+1)^\theta}{k^{\ell+1+2\theta} \sqrt{(j+2)(\ell-j+1)}}
    \\
&\qquad\leq C_{3} A_{\bu}^{\ell +1+2\theta} \rho^{-\ell-1-2\theta} \frac{\ell!e^\ell}{k^\ell}\sum_{j=0}^{\ell-1}\frac{(j+1)}{k}\frac{1}{\sqrt{(j+2)(\ell-j+1)}}\\
  &\qquad\leq C_{4} A_{\bu}^{\ell +1+2\theta} \rho^{-\ell-1-2\theta} \left(\frac{\ell}{k}\right)^{\ell+1}\ell^{1/2}.
 \end{aligned}
\end{equation}
  Consider now the remaining term in \eqref{eq:ugu-proof1}. This term has to be
  treated differently due to the possibility that $\ell = k-1$. There holds, using
  \eqref{eq:thetaprod-infty}, \eqref{eq:induction-u}, and writing $\eta = 2/s$,
  \begin{align*}
  & \sum_{\betam=\ell}\| r^{2-\gamma +\betam }u_m \dbeta \partial_m u_n\|_{L^s(S_{R-k\rho})}\\
    & \qquad \qquad \leq 
 \sum_{\betam=\ell} \|r u_m \|_{L^\infty(S_{R-k\rho})}\| r^{1-\gamma +\betam } \dbeta \partial_m u_n\|_{L^s(S_{R-k\rho})}\\
    & \qquad \qquad \leq
C_{5}\|r^{-\gamma}  u_m\|^{1-\eta}_{L^s(S_{R-k\rho})}
  \| u_m\|_{\cK^{1,s}_\gamma(S_{R-k\rho})}^{\eta}
\sum_{\betam=\ell}\| r^{1-\gamma +\betam } \dbeta \partial_m u_n\|_{L^s(S_{R-k\rho})}\\
    & \qquad \qquad \leq
C_{6} A_{\bu}^{\ell +1+\eta} (k\rho)^{-\ell-1-\eta} (\ell+1)^{\ell+1}.
  \end{align*}
  with $C_5$, $C_6$ independent of $A_{\bu}$, $\ell$, $k$, and $\rho$. This concludes the proof.
  \end{proof}
\section{The Stokes and Navier-Stokes equations in a polygon $P$}
\label{sec:polygon}
We now consider the Navier-Stokes problem in a polygon $P$, 
with internal angles $\phi_i$ such that $0<\phi_i<2\pi$, 
corners $c_i$ and edges $e_i$, $i=1,\dots, n$ 
as depicted in Figure \ref{fig:polygon}. 
\begin{figure}
  \centering
  \includegraphics[width=.5\textwidth]{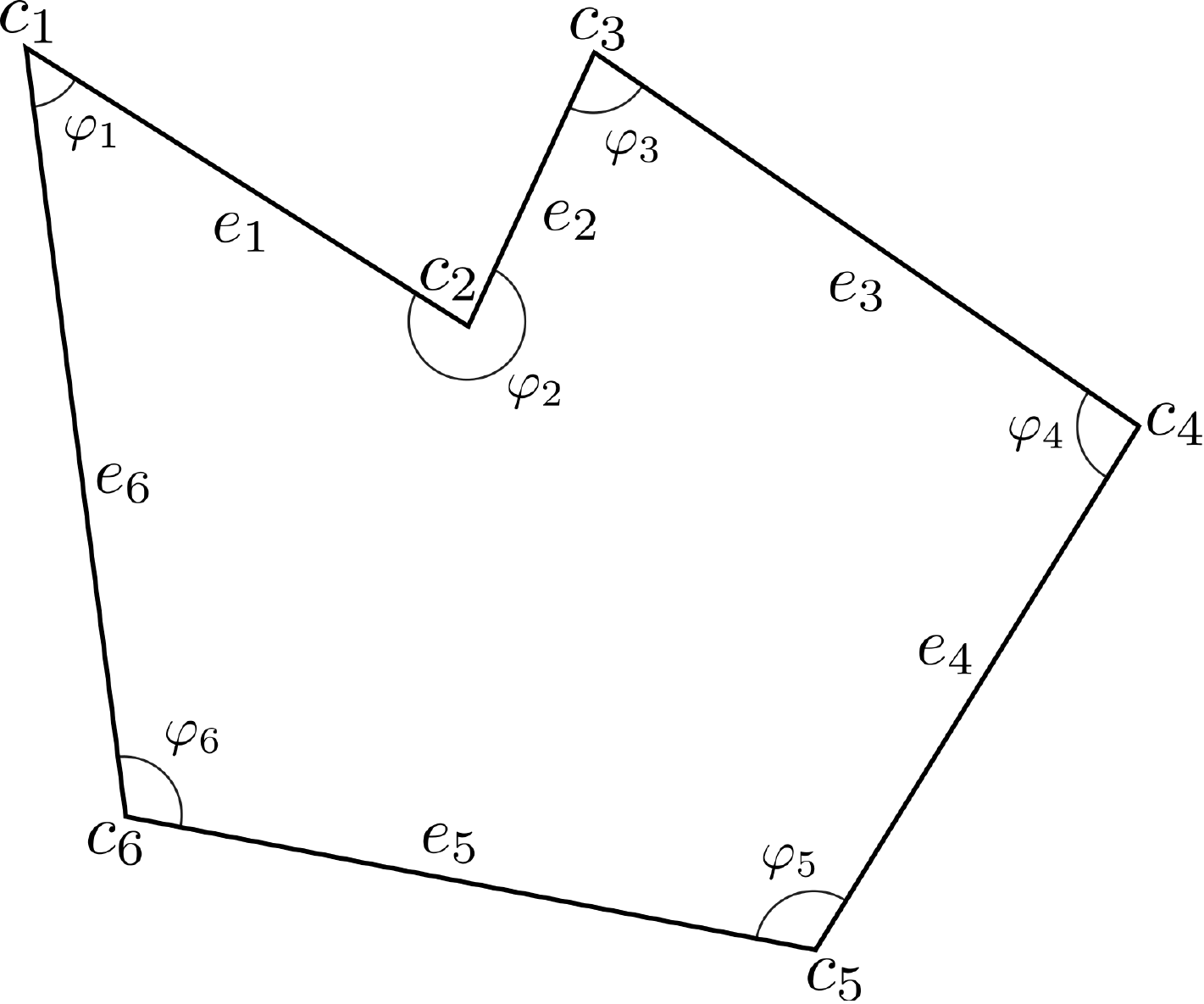}
  \caption{The polygon $P$, with internal angles $\phi_i$, corners $c_i$, and edges $e_i$, $i=1,
    \dots, 6$.}
  \label{fig:polygon}
\end{figure}
We recall \eqref{eq:NSEIntr},
the incompressible Navier-Stokes equations with homogeneous Dirichlet boundary conditions
\begin{equation}
  \label{eq:ns-P}
\begin{split}
  -\nu \Delta \bu +\left( \bu\cdot\nabla \right)\bu+ \nabla p  &= \bbf \text{ in }P,\\
  \nabla \cdot \bu &= 0  \text{ in }P,\\
  \bu &= \bzero \text{ on }\partial P.
\end{split}
\end{equation}
It is well-known that for sufficiently small data, 
i.e.\ for sufficiently small $\| f \|_{\bm{H}^{-1}(P)}/\nu^{2}$,
\eqref{eq:ns-P} admits a unique weak (Leray-Hopf) solution 
$(\bu,p) \in H^1_0(P)^2 \times L^2(P)/{\mathbb R}$ 
(e.g. \cite{Girault1986a,Ladyzhenskaya1969,LionsJL69,Temam1995}).
For convenience, \emph{we assume throughout what follows that 
$\nu = 1$ and that $\| \bbf \|_{\bm{H}^{-1}(P)}$ is sufficiently small 
to ensure uniqueness of $(\bu,p)$}.
The proof of weighted analytic estimates for Leray-Hopf weak solutions of 
problem \eqref{eq:ns-P} consists in the combination of the estimates 
localised at every corner sector with classical regularity results 
in the interior and along the (straight) edges of the domain.

To show the estimates in the corner sectors, we will again 
proceed by induction.
Specifically, we decouple the inductive step and the
starting point of our argument.
First, in Lemma \ref{lemma:induction-step}, 
we prove an analytic regularity shift estimate, starting from
the induction hypothesis introduced in \eqref{eq:induction} and using the results
obtained in Section \ref{sec:nonlin-term}. 
This will serve in the following as the local induction
step for the proof of analytic regularity in polygons.
Then, the base case for induction is given
in Lemma \ref{lemma:base-reg}, where we derive a basic regularity estimate in
weighted spaces for the Leray-Hopf solution to the problem under consideration. 

Theorem \ref{th:analytic-P} constitutes then the main result of this paper, i.e.,
weighted analytic regularity of the Leray-Hopf solutions to the incompressible Navier-Stokes problem, 
in polygonal domains and with ``no-slip'', homogeneous Dirichlet boundary conditions.

\subsection{The Navier-Stokes equation in the polygon}
We start by the introduction of the setting and notation for the polygonal case.
For all $i=1, \dots, n$ and for $0< D <|e_i|$, where $|e_i|$ is the length
of the edge $e_i$, we denote the sectors centered at corner $c_i$ by
\begin{equation*}
  S^i_D = B_D(c_i)\cap P
\end{equation*}
where $B_D(c_i)$ is the ball of radius $D$ centered in $c_i$.
We write $r_i = |x-c_i|$, $\ugamma = \{\gamma_i\}_{i=1,\dots, n}$ and
\begin{equation*}
  r^\ugamma = \prod_{i=1, \dots, n} r_i^{\gamma_i}.
\end{equation*}
For $\ell\in \mathbb{N}_0$, $1<s<\infty$ and 
for $\ugamma \in \mathbb{R}^n$, 
the definition of the spaces $\cK^{\ell, s}_\ugamma(P)$ and $\cK^{\varpi,s}_\ugamma(P)$ 
follows directly from the one introduced for $K$.
In the following, comparisons between $n$-dimensional vectors have to be
  interpreted elementwise, e.g., for $\ugamma = \{\gamma_i\}_{i=1, \dots, n}$
  and $ \ueta = \{\eta_i\}_{i=1, \dots, n}$, we write
  $\ugamma < \ueta$ if 
  \begin{equation*}
    \gamma_i  < \eta_i \quad \text{for all}\quad i=1, \dots, n. 
  \end{equation*}
  Also, for $x\in \mathbb{R}$, we write $x+\ugamma = \{x+\gamma_i\}_{i=1, \dots, n}$.

Fix $R_P\in \mathbb{R}_+$ such that $R_P\leq 1$ and $R_P \leq \min_{i,j=1,\dots, n} |c_i-c_j|$. 
Denote by $P_0$ a set bounded away from the corners such that $P_0 \cup \left(
  \bigcup_{i=1, \dots, n} S^i_{R_P/2}\right) = P$, see Figure
  \ref{fig:polygon-decomposition}.
  \begin{figure}
    \centering
    \includegraphics[width=.4\textwidth]{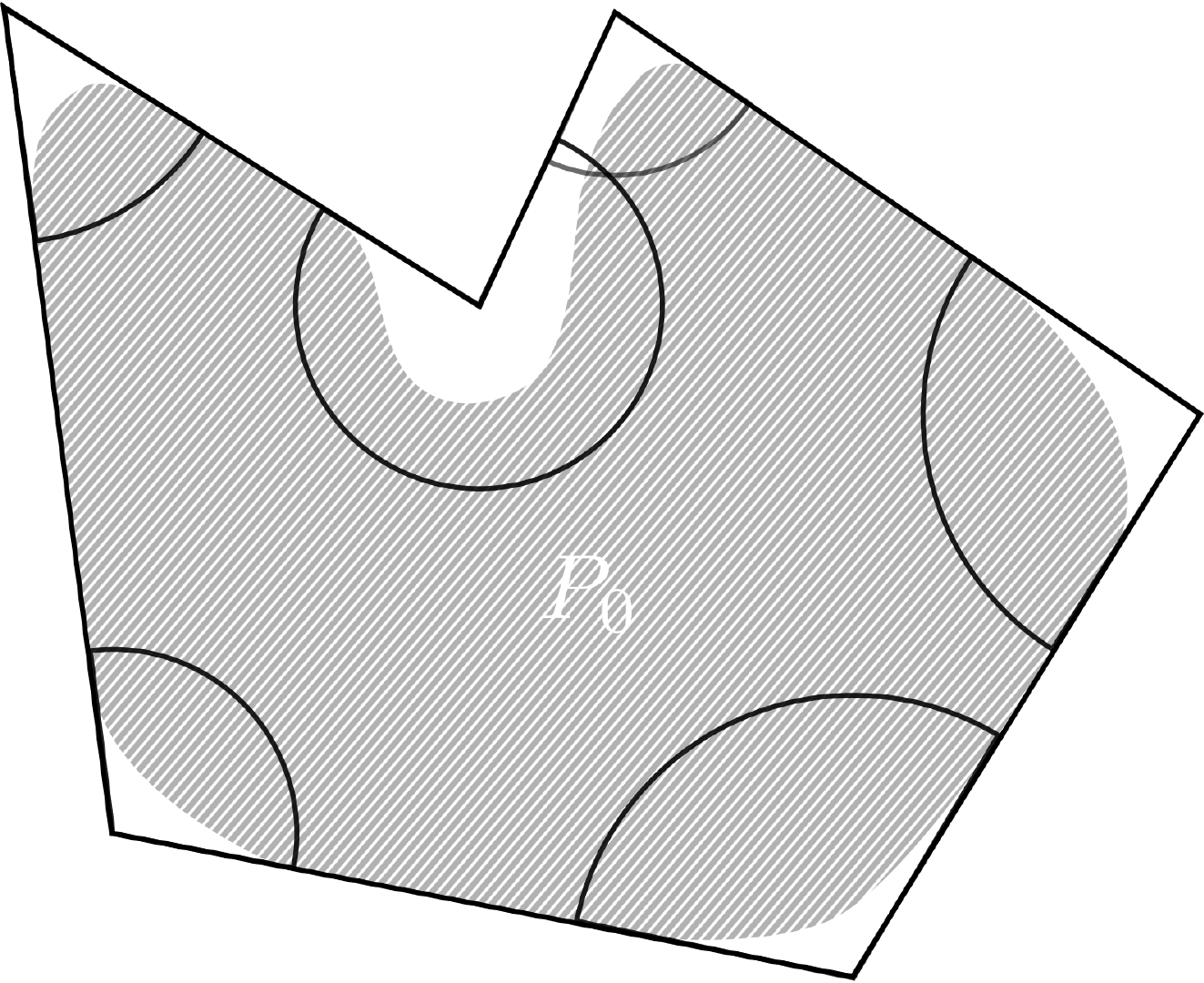}
    \caption{Corner sectors with different radii and set $P_0$, denoted by the shaded part of the polygon.}
    \label{fig:polygon-decomposition}
  \end{figure}

\subsubsection{Inductive step}
  Using the results introduced in Sections \ref{sec:sector}, we can now show that, given a sufficiently
  regular right hand side, if the derivatives of a Leray-Hopf solution to the Navier-Stokes
  equations satisfy weighted analytic estimates up to a certain order in a sector, 
  then by elliptic regularity in weighted spaces the estimates hold to higher order. 
  This will be used as the
  inductive step in the proof of weighted analytic regularity in polygons.
  \begin{lemma}
    \label{lemma:induction-step}
 Let $k\in \mathbb{N}$, $i\in \{1, \dots, n\}$, $2
 < s<\infty$, $\gamma \in \mathbb{R}$ such that
  $\gamma-2/s>-1$. Let 
  \begin{equation*}
    \bbf\in \cK^{\varpi, s}_{\gamma-2}(S^i_{R_P}; C_f, A_f),
  \end{equation*}
  and let  
  $(\bu, p)\in\bcK^{k}_\gamma(S^i_{R_P-k\rho})\times \cK^{k-1}_{\gamma-1}(S^i_{R_P-k\rho})$ 
  be a solution to \eqref{eq:ns-P}. 
  Then, $C_{\bu}$, $A_{\bu}$, $C_p$, and $A_p$ can be chosen independently of $k$ and $\rho$ 
  so that the hypothesis ``$H_{\bu,p}(s,\gamma, k,C_{\bu,p}, A_{\bu,p})$ holds in $S^i_{R_P}$''
  implies
  \begin{equation*}
    H_{\bu,p}(s,\gamma, k+1,C_{\bu,p}, A_{\bu,p})\text{ holds in }S^i_{R_P}\;.
  \end{equation*}
\end{lemma}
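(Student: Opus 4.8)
The strategy is to upgrade the induction hypothesis $H_{\bu,p}(s,\gamma,k,\cdot)$ to $H_{\bu,p}(s,\gamma,k+1,\cdot)$ by applying the localized Stokes estimate of Proposition \ref{th:stokes-estimate} (or its variant Proposition \ref{th:stokes-estimate-bis}, to accommodate the base regularity of the Leray-Hopf solution) on the sector $S^i_{R_P}$, with the nonlinear term $(\bu\cdot\nabla)\bu$ moved to the right-hand side and the pressure divergence-constraint $g=0$. Concretely, fix $i$ and write $S_D = S^i_D$ for brevity; given $\rho \in (0, R_P/(2(k+1))]$ we must show \eqref{eq:induction-u} for $\ell = k+1$ and \eqref{eq:induction-p} for $\ell = k+1$, since the lower-order cases $\ell \le k$ are already contained in the hypothesis. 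We apply Proposition \ref{th:stokes-estimate} (in the form valid for the Navier-Stokes solution, treating $\bbf - (\bu\cdot\nabla)\bu$ as the Stokes source and $g = 0$) with $\ell$ replaced by $k-1$ and $j = k$: this yields
\begin{equation*}
  |\bu|_{\bcK^{k+1,s}_\gamma(S_{R_P-(k+1)\rho})} + |p|_{\cK^{k,s}_{\gamma-1}(S_{R_P-(k+1)\rho})}
  \le C\Big\{ |\bbf - (\bu\cdot\nabla)\bu|_{\bcK^{k-1,s}_{\gamma-2}(S_{R_P-k\rho})}
  + \rho^{-1}|p|_{\cK^{k-1,s}_{\gamma-1}(S_{R_P-k\rho})} + \sum_{n\le 1}\rho^{n-2}|\bu|_{\bcK^{k-1+n,s}_\gamma(S_{R_P-k\rho})} \Big\}.
\end{equation*}

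**Estimating the right-hand side.**
Each term on the right is now controlled by the induction hypothesis at level $k$ together with Lemma \ref{lemma:nonlin-est}. The source contribution splits as $|\bbf|_{\bcK^{k-1,s}_{\gamma-2}} + |(\bu\cdot\nabla)\bu|_{\bcK^{k-1,s}_{\gamma-2}}$; the first is bounded by $C_f A_f^{k-1}(k-1)!$ using the analyticity of $\bbf$ and Stirling, the second by Lemma \ref{lemma:nonlin-est} (with $\ell = k-1$) by $C A_{\bu}^{k+2\theta}(k\rho)^{-k-2\theta}k^{k+1/2}$. The remaining terms—$\rho^{-1}|p|_{\cK^{k-1,s}_{\gamma-1}}$, $\rho^{-2}|\bu|_{\bcK^{k-1,s}_\gamma}$, $\rho^{-1}|\bu|_{\bcK^{k,s}_\gamma}$—are each estimated via \eqref{eq:induction-p}/\eqref{eq:induction-u} at orders $k-1$ and $k$, producing quantities of the form $C_{\bu}A_{\bu}^{k}(k\rho)^{-k}k^{k}\cdot\rho^{-1}$, etc. Using $j\rho \le R_P/2 < 1$ and comparing $(k\rho)^{-\ell}$ with $((k+1)\rho)^{-\ell}$ (the ratio $(1+1/k)^\ell$ is bounded by $e^{\ell/k} \le e$ for $\ell \le k+1$), one converts all these to the target form $((k+1)\rho)^{-(k+1)}(k+1)^{k+1}$ with an extra factor $C$ and a power of $A_{\bu}$. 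The key bookkeeping point is that $k^{k+1/2}/(k+1)^{k+1} \le C k^{-1/2}$ and $k^k/(k+1)^{k+1} \le C k^{-1}$, so the contributions are in fact summable/small, which is what allows the constants $C_{\bu}, A_{\bu}, C_p, A_p$ to be closed independently of $k$.

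**Closing the induction.**
To obtain \eqref{eq:induction-u} and \eqref{eq:induction-p} at level $k+1$ with the \emph{same} constants, we require that the sum of all right-hand contributions, after the conversions above, be bounded by $C_{\bu}A_{\bu}^{k+1}((k+1)\rho)^{-(k+1)}(k+1)^{k+1}$ (and likewise for $p$). Collecting powers of $A_{\bu}$: the worst term carries $A_{\bu}^{k+2\theta}$ with $\theta = 1/s < 1/2$, so $A_{\bu}^{k+2\theta} = A_{\bu}^{k+1}\cdot A_{\bu}^{2\theta-1}$, and since $2\theta - 1 < 0$ this is $\le A_{\bu}^{k+1}$ once $A_{\bu}\ge 1$; the source term $C_f A_f^{k-1}(k-1)!$ is dominated by $C_f A_f^{k+1}(k+1)! \le C_f (A_f/A_{\bu})^{k+1}A_{\bu}^{k+1}(k+1)!$ and $(k+1)! \le C((k+1)\rho)^{-(k+1)}(k+1)^{k+1}$ fails in general—so instead one uses $(k-1)! \le C (k+1)^{k-1} \le C((k+1)\rho)^{-(k+1)}(k+1)^{k+1}$ when $\rho \le 1$ and $(k+1)\rho \le R_P/2$, absorbing the mismatch into the constant. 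Choosing first $A_{\bu}$ large relative to $A_f$ and to the structural constant $C$ from Proposition \ref{th:stokes-estimate} and Lemma \ref{lemma:nonlin-est}, and then $C_{\bu}, C_p$ large relative to $C_f$ and to the coefficients appearing, closes the recursion. The analogous chain for $p$ uses the $|p|_{\cK^{k,s}_{\gamma-1}}$ output of the Stokes estimate directly. \textbf{The main obstacle} is the careful matching of the three competing scales—the factorial growth $k!$ from $\bbf$, the $(k\rho)^{-k}k^k$ growth from the induction hypothesis, and the $\rho$-powers introduced by the localization cutoffs—so that a single pair $(A_{\bu}, C_{\bu})$ works for every $k$ simultaneously; this is exactly where the hypothesis $\gamma - 2/s > -1$ (needed for the weighted interpolation inequalities \eqref{eq:thetaprod-2s}, \eqref{eq:thetaprod-infty} feeding Lemma \ref{lemma:nonlin-est}) and the restriction $s > 2$ (so that $\theta = 1/s < 1/2$, making the nonlinear term subcritical in the constant-tracking) enter decisively.
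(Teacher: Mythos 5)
Your proposal is correct and follows essentially the same route as the paper's proof: rewrite \eqref{eq:ns-P} as a Stokes system with source $\bbf-(\bu\cdot\nabla)\bu$ and $g=0$, apply the localized shift estimate of Proposition \ref{th:stokes-estimate-bis} with $\ell=k-1$, $j=k$, bound the right-hand side via the induction hypothesis and Lemma \ref{lemma:nonlin-est}, and close the recursion by choosing $A_{\bu}=A_p$ large enough that $A_{\bu}^{1-2/s}$ absorbs the structural constant, which is exactly where $s>2$ enters. The only detail you gloss over — that the level-$(k+1)$ bounds for $\ell\le k$ follow from the level-$k$ hypothesis — is handled in the paper by the explicit rescaling $\hrho=(k+1)\rho/k$, but this is a one-line fix consistent with your reading of the hypothesis.
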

\begin{proof}
  We remind that we work under the small data hypothesis which ensures that 
      $(\bu,p)$ is unique.  

Assume that $H_{\bu,p}(s,\gamma, k,C_{\bu,p}, A_{\bu,p})$ holds in
  $S^i_{R_P}$. Then, for all $\rho\in(0, R_P/(2(k+1))]$, there exists
  $\hrho = (k+1)\rho/k$ such that
  \begin{equation*}
    |\bu |_{\bcK^{\ell,s}_\gamma(S_{R_P-(k+1)\rho}) }= 
    |\bu |_{\bcK^{\ell,s}_\gamma(S_{R_P-k\hrho}) }\leq 
    C_{\bu} A_{\bu}^{\ell}(k\hrho)^{-\ell}\ell^\ell =
    C_{\bu} A_{\bu}^{\ell}((k+1)\rho)^{-\ell}\ell^\ell 
  \end{equation*}
  for all $\ell = 0, \dots, k$. Similarly,
\begin{equation*}
    |p |_{\cK^{\ell-1,s}_{\gamma-1}(S_{R_P-(k+1)\rho}) }= 
    |p |_{\cK^{\ell-1,s}_{\gamma-1}(S_{R_P-k\hrho}) }\leq 
    C_{p} A_{p}^{\ell}(k\hrho)^{-\ell}\ell^\ell =
    C_{p} A_{p}^{\ell}((k+1)\rho)^{-\ell}\ell^\ell 
  \end{equation*}
  for all $\ell=1, \dots, k$. Therefore, it remains to prove the bounds on
    $|\bu |_{\bcK^{k+1,s}_\gamma(S_{R_P-(k+1)\rho}) } $ and on $|p |_{\cK^{k,s}_{\gamma-1}(S_{R_P-(k+1)\rho}) }$.
      
      We write \eqref{eq:ns-P} as (recall that we assume $\nu=1$)
\begin{equation*}
\begin{split}
  - \Delta \bu + \nabla p  &= \bbf - \left( \bu\cdot\nabla \right)\bu\;\text{ in }P,\\
  \nabla \cdot \bu &= 0  \text{ in }P,\\
  \bu &= \bzero \text{ on }\partial P\setminus\{0\}.
\end{split}
\end{equation*}
By the hypotheses and Lemma \ref{lemma:nonlin-est}, 
$\bbf - \left( \bu\cdot\nabla \right)\bu\in \bcK^{k-1, s}_{\gamma-2}(S^i_{R_P-k\rho})$. 
We apply Proposition \ref{th:stokes-estimate-bis} to obtain,
for all $\rho \in (0, R_P/(2(k+{1}))]$,
\begin{align*}
&| \bu |_{\bcK^{k+1,s}_\gamma (S^i_{R_P-(k+1)\rho})}
  +     | p |_{\cK^{k,s}_{\gamma-1}(S^i_{R_P-(k+1)\rho})} 
  \\
  &\quad
  \leq 
    C_{1}
    \begin{multlined}[t][.8\textwidth]
    \bigg\{
    |  \bbf |_{\bcK^{k-1,s}_{\gamma-2}(S^i_{R_P-k\rho})}
    + |  (\bu \cdot \nabla)\bu |_{\bcK^{k-1,s}_{\gamma-2}(S^i_{R_P-k\rho})}
  \\
    +\rho^{-1}  |  p |_{\cK^{k-1,s}_{\gamma-1}(S^i_{R_P-k\rho})}
  + \sum_{n \leq 1}\rho^{n-2}|  \bu |_{\bcK^{k+n-1,s}_{\gamma}(S^i_{R_P-k\rho})}
  \bigg\},
    \end{multlined}
\end{align*}
with $C_1$ independent of $k$, $\rho$, $\bu$, and $p$.
Now, if $H_{\bu,p}(s,\gamma, k,C_{\bu,p}, A_{\bu,p})$ holds in $S^i_{R_P}$
and using the estimate \eqref{eq:nonlin-est} of Lemma \ref{lemma:nonlin-est}, 
we obtain
\begin{equation}
\label{eq:pre-end-inductive-step}
\begin{aligned}
&  | \bu |_{\bcK^{k+1,s}_\gamma (S^i_{R_P-(k+1)\rho})}
  +     | p |_{\cK^{k,s}_{\gamma-1}(S^i_{R_P-(k+1)\rho})} \\
  & \qquad \leq
  C_{2}
  \bigg\{
     A_f^{k-1} (k-1)! +  A_{\bu}^{k+2/s}(k\rho)^{-k-2/s} k^{k+1/2}
      + A_{p}^{k}\rho^{-k-1}
      + A_{\bu}^{k}\rho^{-k-1}
    \bigg\},
\end{aligned}
\end{equation}
with $C_2\geq 1$ independent of $k, \rho$, $A_{\bu}$, and $A_p$.
Thus, choosing $C_{\bu}, C_p \geq 1$ and $A_{\bu} = A_p\geq 1$ 
so large that
\begin{equation} \label{eq:end-inductive-step}
    A_{\bu}\geq A_f, \qquad  A_{\bu}^{1-2/s} \geq 4C_{2},
\end{equation}
and that $H_{\bu,p}(s,\gamma, k,C_{\bu,p}, A_{\bu,p})$ holds in $S^i_{R_P}$,
we obtain the assertion.
\end{proof}
\begin{remark}
  \label{remark:s>2}
 The proof of Lemma \ref{lemma:induction-step} shows how the
 argument used in the present paper does not work when $s\leq 2$.
 Specifically, only if $s>2$ there exists an $A>0$ such that the second inequality
 in \eqref{eq:end-inductive-step} is verified and $(k\rho)^{-2/s}<
   (k\rho)^{-1}$ in \eqref{eq:pre-end-inductive-step}.
\end{remark}

\subsubsection{Base regularity estimate}

We recall two lemmas characterizing continuous imbeddings in weighted
spaces from \cite[Lemmas 4.1.2 and 4.1.3]{Mazya2010}.
\begin{lemma}
  \label{lemma:imbedding-sameorder}
  Let $1<t < s<\infty$, $\ugamma, \ueta\in \mathbb{R}^n$, and $\ell\in\mathbb{N}_0$ 
  such that $\ugamma -2/s > \ueta -2/t $.
  Then,
  \begin{equation*}
    \cK^{\ell,s}_{\ugamma}(P) \hookrightarrow \cK^{\ell,t}_{\ueta}(P).
  \end{equation*}
\end{lemma}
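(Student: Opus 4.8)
The plan is to deduce the imbedding from a single weighted Hölder inequality on the bounded domain $P$. Continuity of the inclusion is equivalent to the a priori bound $\|v\|_{\cK^{\ell,t}_\ueta(P)}\le C\|v\|_{\cK^{\ell,s}_\ugamma(P)}$ holding for every $v\in\cK^{\ell,s}_\ugamma(P)$, and this estimate simultaneously yields the set inclusion $\cK^{\ell,s}_\ugamma(P)\subset\cK^{\ell,t}_\ueta(P)$, so no density argument is needed. Since both norms are finite sums over multi-indices $\alpha$ with $\alpham\le\ell$, it suffices to bound, for each such fixed $\alpha$, the term $\|r^{\alpham-\ueta}\dalpha v\|_{L^t(P)}$ by a constant times $\|r^{\alpham-\ugamma}\dalpha v\|_{L^s(P)}$, and then to sum.

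For fixed $\alpha$, writing $w:=\dalpha v$, I would observe that the two weight functions occurring in the $\ueta$- and the $\ugamma$-weighted norms differ only by the fixed, purely radial factor $\mu_{\ugamma,\ueta}:=\prod_{i=1}^n r_i^{\gamma_i-\eta_i}$, independently of $\alpha$; that is, $r^{\alpham-\ueta}w=\mu_{\ugamma,\ueta}\,(r^{\alpham-\ugamma}w)$. Because $t<s$, Hölder's inequality with the conjugate exponents $s/t$ and $s/(s-t)$ gives
\[
\|r^{\alpham-\ueta}\,w\|_{L^t(P)}\;\le\;\|\mu_{\ugamma,\ueta}\|_{L^{ts/(s-t)}(P)}\,\|r^{\alpham-\ugamma}\,w\|_{L^s(P)} .
\]
Thus everything reduces to checking that $\mu_{\ugamma,\ueta}\in L^{ts/(s-t)}(P)$, with a norm depending only on $P$, $s$, $t$, $\ugamma$ and $\ueta$.

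This integrability check is the one genuinely non-routine point. Since $P$ is bounded, the distances $r_i$ are bounded above; and since the vertices $c_i$ are pairwise distinct, in a neighbourhood of $c_i$ the factors $r_j$ with $j\ne i$ stay bounded above and below. Hence $\int_P \mu_{\ugamma,\ueta}^{\,ts/(s-t)}\,\mathrm{d}x$ is finite if and only if the local contribution near each $c_i$ is finite, and, passing to plane polar coordinates centred at $c_i$, the latter amounts to $\tfrac{ts}{s-t}(\gamma_i-\eta_i)>-2$ for every $i$. Using $2(s-t)/(ts)=2/t-2/s$, this is precisely the hypothesis $\gamma_i-2/s>\eta_i-2/t$, i.e. $\ugamma-2/s>\ueta-2/t$ componentwise. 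Combining the displayed bound with this finiteness and summing over $\alpham\le\ell$ yields $\|v\|_{\cK^{\ell,t}_\ueta(P)}\le C\|v\|_{\cK^{\ell,s}_\ugamma(P)}$, hence the claimed continuous imbedding. (Alternatively, one may simply invoke \cite[Lemma~4.1.2]{Mazya2010}, from which the statement is quoted.) I expect the main obstacle to be merely bookkeeping: arranging the exponents so that the integrability threshold for the weight power $\mu_{\ugamma,\ueta}$ matches the stated strict inequality between $\ugamma-2/s$ and $\ueta-2/t$ exactly.
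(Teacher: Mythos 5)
Your argument is correct. Note, however, that the paper does not actually prove this lemma: it is quoted verbatim from \cite[Lemma 4.1.2]{Mazya2010}, exactly as your parenthetical alternative suggests, so there is no internal proof to compare against. Your Hölder-based derivation is the standard proof of that cited result and is complete: the factorization $r^{\alpham-\ueta}\dalpha v = \bigl(\prod_i r_i^{\gamma_i-\eta_i}\bigr)\, r^{\alpham-\ugamma}\dalpha v$ with a weight ratio independent of $\alpha$, the Hölder step with conjugate exponents $s/t$ and $s/(s-t)$, and the reduction of the integrability of $\prod_i r_i^{(\gamma_i-\eta_i)ts/(s-t)}$ to the local condition $(\gamma_i-\eta_i)\,ts/(s-t)>-2$ near each vertex (all other factors being bounded above and below there, and $P$ being bounded) are all sound, and the exponent arithmetic $2(s-t)/(ts)=2/t-2/s$ does convert this into the stated componentwise hypothesis $\ugamma-2/s>\ueta-2/t$. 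The only cosmetic point worth flagging is that the paper's norm is an $\ell^s$-combination of the $L^s$-norms over $\alpham\le\ell$ while you estimate term by term and sum; since the number of multi-indices is finite and depends only on $\ell$, this changes nothing beyond the constant. Your proof is also honest about where boundedness of $P$ enters, which matters: the analogous embedding on the infinite sector $K$ would fail at infinity, so the restriction to the polygon is essential and you use it correctly.
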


\begin{lemma}
  \label{lemma:imbedding}
  Let $1<s\leq t<\infty$, $\ugamma, \ueta\in \mathbb{R}^n$, and $\ell, k$ in $\mathbb{N}$ such that $\ell > k$ and $\ell-2/s \geq k-2/t$, $\ugamma -2/s \geq \ueta -2/t $.
  Then,
  \begin{equation*}
    \cK^{\ell,s}_{\ugamma}(P) \hookrightarrow \cK^{k,t}_{\ueta}(P).
  \end{equation*}
\end{lemma}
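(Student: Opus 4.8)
The plan is to follow the dyadic-decomposition-and-scaling scheme already used in the proof of Lemma \ref{lemma:thetaprod}, combined with the classical Sobolev embedding on a fixed reference domain. First I would localize. On the set $P_0$, which stays bounded away from all corners, the weighted norms $\|\cdot\|_{\cK^{\ell,s}_\ugamma}$ and $\|\cdot\|_{\cK^{k,t}_\ueta}$ are equivalent to the ordinary $W^{\ell,s}(P_0)$ and $W^{k,t}(P_0)$ norms, and the embedding $W^{\ell,s}(P_0)\hookrightarrow W^{k,t}(P_0)$ is the standard Sobolev embedding, valid because $P_0$ is a bounded Lipschitz domain satisfying the cone condition, $\ell>k$, $\ell-2/s\ge k-2/t$ and $t<\infty$ (cf. \cite{Adams2003}). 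Near a corner $c_i$, on the sector $S^i_{R_P/2}$ all factors $r_j$ with $j\ne i$ are bounded above and below by positive constants, so that on $S^i_{R_P/2}$ the $\cK^{\ell,s}_\ugamma(P)$- and $\cK^{k,t}_\ueta(P)$-norms restrict to norms equivalent to those of a single plane sector with scalar weights $\gamma_i$ and $\eta_i$. Hence it suffices to prove, for a single plane sector $S_D$ with vertex at the origin and scalars $\gamma,\eta$ with $\ell>k$, $\ell-2/s\ge k-2/t$, $\gamma-2/s\ge\eta-2/t$, the bound $\|v\|_{\cK^{k,t}_{\eta}(S_D)}\le C\|v\|_{\cK^{\ell,s}_{\gamma}(S_D)}$.

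For the sector estimate I would reuse the dyadic annuli $V^m=\{2^{-m-1}<|x|<2^{-m}\}$, their sectorial parts $S^m=S_D\cap V^m$, and the homotheties $\Phi_m:x\mapsto 2^m x$ mapping $S^m$ onto a fixed reference annular sector $S^0$, with $\hv=v\circ\Phi_m^{-1}$. A change of variables gives, for every multi-index $\alpha$,
\begin{equation*}
\|r^{|\alpha|-\gamma}\dalpha v\|_{L^s(S^m)}=2^{m(\gamma-2/s)}\|\hr^{|\alpha|-\gamma}\hdalpha\hv\|_{L^s(S^0)},
\end{equation*}
and since $1/2<\hr<1$ on $S^0$ the right side is comparable, uniformly in $m$, to $2^{m(\gamma-2/s)}\|\hdalpha\hv\|_{L^s(S^0)}$; summing over $|\alpha|\le\ell$ yields $\|v\|_{\cK^{\ell,s}_{\gamma}(S^m)}\approx 2^{m(\gamma-2/s)}\|\hv\|_{W^{\ell,s}(S^0)}$, and likewise $\|v\|_{\cK^{k,t}_{\eta}(S^m)}\approx 2^{m(\eta-2/t)}\|\hv\|_{W^{k,t}(S^0)}$, with constants independent of $m$ and of $v$. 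On the fixed domain $S^0$ the Sobolev embedding gives $\|\hv\|_{W^{k,t}(S^0)}\le C\|\hv\|_{W^{\ell,s}(S^0)}$, and combining the three relations one obtains
\begin{equation*}
\|v\|_{\cK^{k,t}_{\eta}(S^m)}\le C\,q^m\,\|v\|_{\cK^{\ell,s}_{\gamma}(S^m)},\qquad q:=2^{(\eta-2/t)-(\gamma-2/s)}\le 1,
\end{equation*}
the inequality $q\le1$ being exactly the weight hypothesis $\gamma-2/s\ge\eta-2/t$.

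It then remains to sum over the disjoint annuli $S^m$ tiling $S_D$. Raising the last inequality to the $t$-th power, using $q^{mt}\le 1$, and then the elementary nesting inequality $\|(a_m)\|_{\ell^t}\le\|(a_m)\|_{\ell^s}$ (valid for $s\le t$, applied to $a_m=\|v\|_{\cK^{\ell,s}_{\gamma}(S^m)}$), one gets $\|v\|_{\cK^{k,t}_{\eta}(S_D)}\le C\|v\|_{\cK^{\ell,s}_{\gamma}(S_D)}$; patching this with the interior bound on $P_0$ and summing over the finitely many corners $c_i$ finishes the proof.

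The main obstacle is not conceptual but a matter of careful bookkeeping: one must track the three sources of powers of $2^m$ under the homothety — the Jacobian $2^{-2m}$, the derivative scaling $2^{m|\alpha|}$, and the weight $r=2^{-m}\hr$ — and check that the resulting exponent of $q$ has the correct sign. The point is that the three hypotheses enter in distinct places: $\ell>k$ and $\ell-2/s\ge k-2/t$ make the Sobolev embedding on the fixed domain $S^0$ available with an $m$-independent constant; $\gamma-2/s\ge\eta-2/t$ makes the geometric factor satisfy $q\le1$; and $s\le t$ makes the passage from $\ell^s$ to $\ell^t$ legitimate.
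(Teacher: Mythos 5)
Your argument is correct. Note, however, that the paper does not prove this lemma at all: it is quoted verbatim from Maz'ya--Rossmann (Lemmas 4.1.2 and 4.1.3 of \cite{Mazya2010}), so there is no in-paper proof to compare against. What you have written is essentially the standard proof of that cited result, and it is the same dyadic-annulus-plus-homothety mechanism that the paper itself deploys in the proof of Lemma \ref{lemma:thetaprod}: reduce to a fixed reference annular sector where the weight $\hr$ is comparable to $1$, apply the classical (unweighted) Sobolev embedding there with an $m$-independent constant, and track the single power $2^{m((\eta-2/t)-(\gamma-2/s))}$ produced by the Jacobian, the derivative scaling, and the weight. Your bookkeeping of that exponent is right, the borderline case $q=1$ is handled correctly by the $\ell^s\hookrightarrow\ell^t$ nesting (which is where $s\leq t$ enters), and the interior piece on $P_0$ is indeed just the ordinary Sobolev embedding since all $r_i$ are bounded above and below there. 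The only cosmetic caveat is that one should choose $P_0$ to be a Lipschitz subdomain (which the paper's construction permits) so that the cone-condition form of the Sobolev embedding applies; this does not affect the validity of the argument. So: correct, self-contained, and in the spirit of the paper's own scaling arguments, where the paper instead simply cites the literature.
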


We 
next introduce a regularity result in weighted spaces for the Navier-Stokes problem \eqref{eq:ns-P}. 
This will constitute the base case for the inductive proof of analytic regularity. 
In the following we investigate the regularity in the weighted spaces $\cK^{1, s}_\gamma(P)$ 
for $s>2$, i.e., in the non Hilbertian setting. 
Our strategy to prove the base case is to 
prove a higher order regularity result, then use the preceding imbeddings to prove
the base case.
\begin{lemma}
  \label{lemma:base-reg}
  Let $\bu$, $p$ be a solution to \eqref{eq:ns-P}. 
  Suppose that there exists $2\leq \ts$ and $\utgamma\in \mathbb{R}^n$ such that
  \begin{equation*}
    \bbf \in \bm{H}^{-1}(P) \cap \bcK^{0,\ts}_{\utgamma-2}(P)
    .
  \end{equation*}
  Then,
  for all $1<s < \infty$ and $\ugamma \in \mathbb{R}^n$ such that
  $\ugamma -2/s \leq \min(2, \utgamma+1-2/\ts)$, there holds
  \begin{equation*}
  \bu \in \bcK^{1,s}_{\ugamma}(P), \qquad p \in \cK^{0,s}_{\ugamma-1}(P).
  \end{equation*}
  \end{lemma}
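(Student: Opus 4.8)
The plan is to obtain the conclusion by a bootstrap argument, moving the quadratic nonlinearity to the right-hand side and using the linear Stokes regularity in weighted spaces (Theorem \ref{th:stokes-homogeneous-reg-2}, localised near each corner, combined with interior and edge regularity in $P_0$), together with the imbeddings in Lemmas \ref{lemma:imbedding-sameorder} and \ref{lemma:imbedding}. First I would record the starting point: since $\bbf \in \bm{H}^{-1}(P)$ and the data are small, the Leray–Hopf solution satisfies $(\bu,p) \in \bm{H}^1_0(P)\times L^2(P)/\mathbb{R}$; in particular $\bu \in \bm{L}^6(P)$ and $\nabla\bu\in\bm{L}^2(P)$ by the Sobolev imbedding in two dimensions, so that $(\bu\cdot\nabla)\bu \in \bm{L}^{3/2}(P)$, hence in $\bcK^{0,3/2}_{\ueta-2}(P)$ for a suitable (small) weight vector $\ueta$ chosen so that the boundedness of $r^{\ueta}$ on $P$ makes this trivial. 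One then treats $\bbf - (\bu\cdot\nabla)\bu$ as the datum for the Stokes system and applies the finite-order weighted shift (Theorem \ref{th:stokes-homogeneous-reg-2}, via its localised Stokes counterparts Propositions \ref{th:stokes-estimate}/\ref{th:stokes-estimate-bis} patched with classical interior/boundary regularity along the straight edges) to gain one order of weighted Sobolev regularity for $(\bu,p)$.

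The core of the argument is then an iteration. At each step I would have $\bu \in \bcK^{1,s_m}_{\ugamma_m}(P)$ for some exponent $s_m$ and weight $\ugamma_m$; by Lemma \ref{lemma:imbedding-sameorder} (for the $L^s\hookrightarrow L^t$ trade) and Lemma \ref{lemma:imbedding} (trading one derivative for integrability) one controls $\bu$ and $\nabla\bu$ in spaces whose Hölder product lands $(\bu\cdot\nabla)\bu$ in a weighted $L^{s_{m+1}}$ space with a strictly better (larger, up to the cap) weight, namely $\utgamma_{m+1}-2/s_{m+1}$ increasing toward $\min(2,\utgamma+1-2/\ts)$. Feeding this back into the weighted Stokes estimate upgrades $(\bu,p)$ to $\bcK^{1,s_{m+1}}_{\ugamma_{m+1}}(P)\times\cK^{0,s_{m+1}}_{\ugamma_{m+1}-1}(P)$. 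Because the nonlinear map is quadratic, each iteration roughly doubles the effective integrability/weight gain, so finitely many steps suffice to reach any prescribed target $(s,\ugamma)$ with $\ugamma-2/s\le\min(2,\utgamma+1-2/\ts)$; the cap $\min(2,\cdot)$ reflects, on the one hand, the order-two regularity ceiling coming from a single application of the Stokes shift to an $L^{3/2}$-type right-hand side, and on the other the availability of $\bbf$ only in $\bcK^{0,\ts}_{\utgamma-2}$.

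The bookkeeping that needs care is the precise choice of the intermediate exponents and weights so that (i) the hypotheses $\ugamma_m-2/s_m$ avoid the eigenvalues of the Stokes pencils $\mathfrak{A}^i(\lambda)$ at each corner (this is automatic because for the base regularity one stays in the no-eigenvalue strip $(-\uLambda,\uLambda)$ around zero, and the running weights are chosen below the cap), and (ii) the imbedding inequalities of Lemmas \ref{lemma:imbedding-sameorder} and \ref{lemma:imbedding} are applicable, i.e. the derivative/integrability/weight inequalities there are met at every step. The main obstacle I expect is not any single estimate but organising the iteration cleanly: one must verify that the sequence $\utgamma_m-2/s_m$ is strictly increasing and reaches the target in finitely many steps while never violating the strip condition at the corners, and that the patching of the localised corner estimates (Propositions \ref{th:stokes-estimate-bis}) with interior and edge elliptic regularity on $P_0$ is valid uniformly. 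Once the target pair $(s,\ugamma)$ is attained, the claimed memberships $\bu\in\bcK^{1,s}_{\ugamma}(P)$, $p\in\cK^{0,s}_{\ugamma-1}(P)$ follow directly, possibly after one final application of Lemma \ref{lemma:imbedding-sameorder} to descend from an overshoot exponent to the exact $s$ requested.
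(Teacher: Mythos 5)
Your plan assembles the right ingredients --- move $(\bu\cdot\nabla)\bu$ to the right-hand side, apply the finite-order weighted Stokes shift (corner sectors plus classical regularity on $P_0$), and use the imbeddings of Lemmas \ref{lemma:imbedding-sameorder} and \ref{lemma:imbedding} --- but the paper's proof is a \emph{single-step} argument, not a bootstrap. The point you miss is that in two dimensions $\bm{H}^1_0(P)\hookrightarrow \bm{L}^t(P)$ for \emph{every} $t<\infty$ (your $L^6$ is the three-dimensional exponent), so already from the Leray--Hopf regularity $\bu\in\bcK^{1,2}_{\underline{1}}(P)$ one gets, by H\"older with $t=(1/s_1-1/2)^{-1}$, that $(\bu\cdot\nabla)\bu\in\bcK^{0,s_1}_{\utgamma_1}(P)$ for every $1<s_1<2$ and $\utgamma_1-2/s_1<-1$. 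A single application of Theorem \ref{th:stokes-homogeneous-reg} in each corner sector (together with interior regularity on $P_0$) then yields $\bu\in\bcK^{2,s_1}_{\ugamma_1}(P)$, $p\in\cK^{1,s_1}_{\ugamma_1-1}(P)$, and Lemma \ref{lemma:imbedding} trades the extra derivative for integrability up to $s_2\le 2s_1/(2-s_1)$, which sweeps out all $s_2<\infty$ as $s_1\uparrow 2$; the cap $\min(2,\utgamma+1-2/\ts)$ falls out of the two weight constraints in this one pass. No iteration is needed.

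This matters because the unverified part of your plan sits exactly where you placed the iteration: you assert that the sequence $\utgamma_m-2/s_m$ increases and reaches the target in finitely many steps, while simultaneously staying in the eigenvalue-free strip of each pencil $\mathfrak{A}^i(\lambda)$, but you explicitly defer that bookkeeping. As written, that is a genuine gap --- the termination and admissibility of the bootstrap is the whole content of the lemma once the individual tools are granted. The fix is not to do the bookkeeping more carefully but to start from the sharp 2D embedding, after which your steps (i) and (ii) collapse to a single application of the Stokes shift followed by one embedding, and the eigenvalue-strip issue only has to be checked once.
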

\begin{proof}
Since $\bu$ is solution to \eqref{eq:ns-P}, $\bu\in \bm{H}^1_0(P) = \bcK^{1,2}_{\underline{1}} (P)$, 
where $\underline{1} = \{1, \dots, 1\} \in \mathbb{R}^n$, see \cite{Girault1986a}. 
Hence, by the H\"{o}lder inequality and by Sobolev imbedding,
  for all $1< s_1< 2$ and $\utgamma_1 - 2/s_1 < -1$, denoting $t = (1/s_1- 1/2)^{-1}$,
  \begin{equation*}
    \| r^{-\utgamma_1} (\bu\cdot \nabla)\bu \|_{L^{s_1}(P)} \leq \| r^{-\utgamma_1} \bu\|_{L^{t}(P)} \|\nabla \bu \|_{L^2(P)} < \infty,
  \end{equation*}
  i.e.,
  \begin{equation*}
    (\bu\cdot \nabla)\bu\in \bcK^{0,s_1}_{\utgamma_1}(P).
  \end{equation*}
 We temporarily fix $s_1$ and $\utgamma_1$. 
By Lemma \ref{lemma:imbedding-sameorder},
\begin{equation*}
\bbf \in  \bcK^{0,s_1}_{\ueta}(P) \quad \text{for all}\quad\ueta <\utgamma-2+2/s_1-2/\ts. 
\end{equation*}
Problem \eqref{eq:ns-P} can be rewritten as
\begin{equation*}
\begin{split}
  -\Delta \bu + \nabla p  &= \bbf -\left( \bu\cdot\nabla \right)\bu\text{ in }P,\\
  \nabla \cdot \bu &= 0  \text{ in }P,\\
  \bu &= \bzero \text{ on }\partial P.
\end{split}
\end{equation*}
  Then, by classical elliptic regularity in $P_0$ -- see for example
  \cite{Girault1986a} -- and using Theorem \ref{th:stokes-homogeneous-reg} in all the
  sectors $S^i_{R_P}$, $i=1, \dots, n$, we obtain
  \begin{equation*}
    \bu\in \bcK^{2, s_1}_{\ugamma_1}(P), \qquad p \in \cK^{1,s_1}_{\ugamma_1-1}(P),
  \end{equation*}
  for all $\ugamma_1\in \mathbb{R}^n$ such that 
  $\ugamma_1 - 2/s_1 \leq \utgamma_1+2$ and $\ugamma_1 - 2/s_1< \utgamma +2/s_1 - 2/\ts$. 
  Then, by continuous imbedding -- see Lemma \ref{lemma:imbedding} -- 
  if $s_1\leq s_2 \leq 2s_1/(2-s_1)$ and $\ugamma_2 - 2/s_2 \leq  \ugamma_1 - 2/s_1$, 
  there holds
  \begin{equation}
    \label{eq:base-proof-imbed}
    \bu\in \bcK^{1, s_2}_{\ugamma_2}(P), \qquad p \in \cK^{0,s_2}_{\ugamma_2-1}(P).
  \end{equation}
  Due to the arbitrariness of $ 1 <  s_1 < 2$, 
  \eqref{eq:base-proof-imbed} holds for all 
  $ 2 \leq s_2 < \infty$, $\ugamma_2 - 2/s_2  \leq \min(2, \utgamma+1-2/\ts)$.
\end{proof}
\subsubsection{Analytic regularity}
\label{sec:AnReg}
We denote by $\mathfrak{A}^i(\lambda)$ the Mellin symbol of the Stokes operator associated to
the corner $c_i$, i.e., the operator $\mathfrak{L}(\theta, \partial_\theta; \lambda)$ 
in $(0, \phi_i)$ with boundary operator 
$\mathfrak{B}(\theta, \partial_\theta; \lambda)_{|_{\theta= 0, \phi_i}}$ 
introduced in Section \ref{sec:stokes}. 
The operator pencil $\mathfrak{A}^i(\lambda)$ is Fredholm and therefore 
    admits a discrete spectrum of countably many, possibly complex eigenvalues
    which accumulate only at $\lambda = +\infty$.
Let $\Lambda^i_{\min{}}$ be the real part of the eigenvalue with smallest
positive real part of $\mathfrak{A}^i(\lambda)$, and 
let $\uLambda~=~\{ \Lambda^i_{\min{}}\}_{i=1, \dots, n} >0$.

The regularity in countably normed, weighted analytic spaces
for the solution to the incompressible Navier-Stokes equation in the polygon $P$ 
then follows by a bootstrapping argument with induction with respect to the order
of differentiation in the Kondrat'ev scales, with parameter $\gamma$
in one common strip of width $2\uLambda$ 
determined by the spectra of  $\mathfrak{A}^i(\lambda)$, $i=1,...,n$
as in finite-order regularity results in \cite{Grisvard1985,OrltSndig95}.
\begin{theorem}
  \label{th:analytic-P}
  Let $2 < s<\infty$, $\ugamma \in \mathbb{R}^n$ such that
  $\max(-\uLambda , -1)< \ugamma-2/s < \min(\uLambda, 1)$. Let
  $(\bu, p)$ be solution to \eqref{eq:ns-P} with 
  \begin{equation*}
    \bbf\in \bcK^{\varpi, s}_{\ugamma-2}(P)\cap \bm{H}^{-1}(P)
    .
  \end{equation*}
  Then, 
  \begin{equation*}
    \bu\in \bcK^{\varpi, s}_{\ugamma}(P), \qquad
    p\in \cK^{\varpi, s}_{\ugamma-1}(P).
  \end{equation*}
\end{theorem}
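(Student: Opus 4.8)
The plan is to prove the estimates \emph{separately} in each corner sector and in the interior/edge region $P_0$, and then to glue them over the decomposition $P = P_0 \cup \bigcup_{i=1}^n S^i_{R_P/2}$. The corner estimates will be obtained by induction on the order of differentiation, with Lemma~\ref{lemma:base-reg} supplying the base case and Lemma~\ref{lemma:induction-step} the inductive step; the interior/edge estimates come from classical analytic elliptic regularity.

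Fix a corner $c_i$ and set $\gamma = \gamma_i$. The assumption $\max(-\uLambda,-1) < \ugamma - 2/s < \min(\uLambda,1)$ gives componentwise $-\Lambda^i_{\min{}} < \gamma - 2/s < \Lambda^i_{\min{}}$ and $\gamma - 2/s > -1$, so every result of Section~\ref{sec:sector} applies with this $\gamma$ in $S^i_{R_P}$; moreover, restricting $\bbf \in \bcK^{\varpi,s}_{\ugamma-2}(P)$ to $S^i_{R_P}$ (where the factors $r_j$, $j\neq i$, are bounded above and below) yields $\bbf \in \cK^{\varpi,s}_{\gamma-2}(S^i_{R_P};C_f,A_f)$ for suitable $C_f,A_f$. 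First I would invoke Lemma~\ref{lemma:base-reg} with $\ts = s$ and $\utgamma = \ugamma$: since $\bbf \in \bcK^{0,s}_{\ugamma-2}(P)\cap \bm{H}^{-1}(P)$ and $\ugamma - 2/s < 1$, the compatibility condition $\ugamma - 2/s \le \min(2,\ugamma+1-2/s)$ holds, so $\bu \in \bcK^{1,s}_{\ugamma}(P)$ and $p \in \cK^{0,s}_{\ugamma-1}(P)$. Restricting to the sectors, the seminorms $|\bu|_{\bcK^{\ell,s}_{\gamma}(S^i_{R_P-\rho})}$, $\ell = 0,1$, and $|p|_{\cK^{0,s}_{\gamma-1}(S^i_{R_P-\rho})}$ are bounded uniformly in $\rho \in (0,R_P/2]$; hence $H_{\bu,p}(s,\gamma,1,C_{\bu,p},A_{\bu,p})$ holds in $S^i_{R_P}$ once the constants are chosen large enough. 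Because the constants in Lemma~\ref{lemma:induction-step} are only subject to the lower bounds \eqref{eq:end-inductive-step}, one may fix a single quadruple $(C_{\bu},A_{\bu},C_p,A_p)$ serving simultaneously as base case and as fixed data of the inductive step.

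Next I would run the induction. At level $k$, summing the seminorm bounds in $H_{\bu,p}(s,\gamma,k,\cdot,\cdot)$ gives $\bu \in \bcK^{k,s}_{\gamma}(S^i_{R_P-k\rho})$ and $p \in \cK^{k-1,s}_{\gamma-1}(S^i_{R_P-k\rho})$, which is exactly the regularity needed to apply Lemma~\ref{lemma:induction-step}; the lemma then returns $H_{\bu,p}(s,\gamma,k+1,\cdot,\cdot)$ with the same constants, so by induction $H_{\bu,p}(s,\gamma,k,C_{\bu,p},A_{\bu,p})$ holds for every $k\in\mathbb{N}$. To turn this into a weighted analytic estimate, fix $\ell\in\mathbb{N}$ and apply $H_{\bu}(s,\gamma,\ell,\cdot,\cdot)$ with $\rho = R_P/(2\ell)$, so that $\ell\rho = R_P/2$ and $S^i_{R_P-\ell\rho} = S^i_{R_P/2}$; then $|\bu|_{\bcK^{\ell,s}_{\gamma}(S^i_{R_P/2})} \le C_{\bu} A_{\bu}^{\ell}(R_P/2)^{-\ell}\ell^\ell \le C_{\bu}(2eA_{\bu}/R_P)^{\ell}\ell!$ by Stirling, and likewise for $p$. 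Summing over orders $\le L$ shows $\bu \in \bcK^{\varpi,s}_{\gamma}(S^i_{R_P/2})$ and $p \in \cK^{\varpi,s}_{\gamma-1}(S^i_{R_P/2})$.

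Finally I would treat $P_0$ and glue. There $\bbf$ is analytic (the weights $r_i$ are bounded above and below, so the weighted analytic hypothesis reduces to ordinary analyticity), $(\bu,p)$ is smooth by a standard elliptic bootstrap from the base case, and classical analytic regularity for the stationary Navier-Stokes system in the interior and up to analytic — here straight — boundary portions (cf.~\cite{Masuda1967a,Giga1983,MorrNiren57}) yields analytic $L^\infty$ bounds for $\dalpha\bu$ and $\dalpha p$, hence weighted analytic bounds, on a neighbourhood of $\overline{P_0}$ in $\overline{P}$. Taking the maximum of all constants over the $n$ corner sectors and $P_0$, and using $P = P_0 \cup \bigcup_{i=1}^n S^i_{R_P/2}$ together with the elementary behaviour of $r^{\ugamma}$, gives $\|\bu\|_{\bcK^{L,s}_{\ugamma}(P)} \le C A^L L!$ and $\|p\|_{\cK^{L,s}_{\ugamma-1}(P)} \le C A^L L!$ for all $L$, i.e.\ $\bu \in \bcK^{\varpi,s}_{\ugamma}(P)$ and $p \in \cK^{\varpi,s}_{\ugamma-1}(P)$. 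The substantive difficulty — controlling the quadratic nonlinearity in the homogeneous weighted scales — is already dispatched in Lemmas~\ref{lemma:nonlin-est} and~\ref{lemma:induction-step}; the remaining work is bookkeeping, the only genuinely fussy points being the verification that the base case supplies the $k=1$ hypothesis with the constants dictated by the inductive step, and the merging of the corner and interior analytic rates into a single pair $(C,A)$.
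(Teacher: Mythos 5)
Your proposal is correct and follows essentially the same route as the paper: base case from Lemma~\ref{lemma:base-reg} (with $\ts=s$, $\utgamma=\ugamma$), induction over the differentiation order via Lemma~\ref{lemma:induction-step} in each corner sector with constants enlarged once and for all, conversion of the hypotheses $H_{\bu,p}$ into weighted analytic bounds on $S^i_{R_P/2}$, classical interior/boundary analyticity on $P_0$, and gluing over $P=P_0\cup\bigcup_i S^i_{R_P/2}$. The points you flag as fussy (compatibility of the base-case constants with the inductive step, and the choice $\rho=R_P/(2\ell)$ with Stirling to recover the $\ell!$ bound) are exactly the bookkeeping the paper's proof performs.
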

\begin{proof}
We rewrite for the polygon $P$ the induction hypotheses introduced in Section \ref{sec:nonlin-term}. 
For $1 <s <\infty$, $\gamma \in \mathbb{R}^n$, 
$i=1, \dots, n$, $C_{\bu}, A_{\bu}, C_p,A_p>0$, 
and  for $j\in \mathbb{N}_0$, we introduce two statements:
  we shall say that $H^i_{\bu}(s, \ugamma, j,C_{\bu}, A_{\bu})$ holds 
  if for all $\rho\in (0, R_P/(2j)]$,
  \begin{subequations}
  \begin{equation}
    \label{eq:induction-P-u}
    \left| \bu \right|_{\bcK^{\ell,s}_{\gamma_i}(S^i_{R_P-j\rho})} 
    \leq C_{\bu} A_{\bu}^\ell (j\rho)^{-\ell} \ell^\ell \text{ for all }\ell =0, \dots, j
  \end{equation}
  and $H^i_p(s,\ugamma,j,C_{p}, A_{p})$ holds if for all $\rho\in (0, R_P/(2j)]$,
  \begin{equation}
    \label{eq:induction-P-p}
    \left| p \right|_{\cK^{\ell-1,s}_{\gamma_i-1}(S^i_{R_P-j\rho})} 
       \leq C_{p} A_{p}^\ell (j\rho)^{-\ell} \ell^\ell \text{ for all }\ell=1, \dots, j.
  \end{equation} 
\end{subequations}
For integer $k$, the induction hypothesis  is formulated as 
\begin{equation}
  \label{eq:induction-P}
  \begin{multlined}
    H_{\bu,p}(s,\ugamma,  k,C_{\bu,p}, A_{\bu,p}):
    \\
  \qquad\left\{\begin{aligned}
   &H^i_{\bu}(s,\ugamma, j,C_{\bu}, A_{\bu}) \text{ holds for }j=0, \dots, k, 
   \\
  &H^i_p(s,\ugamma, j,C_{p}, A_{p})\text{ holds for }j=1, \dots, k, 
  \end{aligned}
  \right.
  \text{ for all }i = 1, \dots, n.
  \end{multlined}
\end{equation}
  Fix $2< s< \infty$ and $\ugamma$ such that the hypotheses of the theorem
  hold. Then, by
  Lemma \ref{lemma:base-reg}, $\bu \in \bcK^{1,s}_{\ugamma}(P)$ and $p\in
  \cK^{0,s}_{\ugamma-1}(P)$, i.e., there exist $C_{\bu,p}, A_{\bu,p}$ such
    that $H_{\bu,p}(s, \ugamma, 1, C_{\bu,p}, A_{\bu,p})$ holds.
  Suppose now that, for a $k\in \mathbb{N}$, $H_{\bu,p}(s, \ugamma, k, C_{\bu,p}, A_{\bu,p})$ holds,
  with $C_{\bu,p}, A_{\bu,p}$ chosen so large that the conclusion of Lemma
  \ref{lemma:induction-step} holds in every corner sector.
  Then, by Lemma \ref{lemma:induction-step} applied to every corner sector,
  $H_{\bu,p}(s, \ugamma, k+1 ,C_{\bu,p}, A_{\bu,p})$ holds. 
  By induction, this implies that, for all $i=1, \dots, n$,
  \begin{equation}
   \label{eq:an-Si}
    \bu\in \cK^{\varpi, s}_{\ugamma}(S^i_{R_P/2}), \qquad
    p\in \cK^{\varpi, s}_{\ugamma-1}(S^i_{R_P/2}).
  \end{equation}
  The set $P_0$ can be covered by a finite number of balls, and from classical
  results of interior analyticity and of analyticity in regular parts of the
  boundary, see, e.g., \cite{Masuda1967a, Giga1983}, we derive analyticity in
  $P_0$. Since $r$ is bounded away from zero in $P_0$, this implies
 \begin{equation}
   \label{eq:an-P0}
    \bu\in \cK^{\varpi, s}_{\ugamma}(P_0), \qquad
    p\in \cK^{\varpi, s}_{\ugamma-1}(P_0).
  \end{equation}
  Combining \eqref{eq:an-Si} for $i=1, \dots, n$ and \eqref{eq:an-P0} implies the assertion.
\end{proof}
\subsection{Approximability of the solutions}
\label{sec:ApprSol}
We briefly outline some consequences of the weighted analytic
regularity result on Kolmogorov $N$-widths of solution families. 
As is well-known, bounds on Kolmogorov $N$-widths of solution families
of \eqref{eq:NSEIntr} will imply convergence rate estimates for 
for model order reduction and reduced basis methods for NSE.
See, e.g.,  \cite{Lassila2014,Hesthaven2016} and the references there
for examples and details.

We start with a corollary of Theorem \ref{th:analytic-P},
based on an embedding in weighted spaces.
\begin{corollary}
\label{corollary:analytic-L2}
Let $2 < s<\infty$, $\ugamma_{f} \in \mathbb{R}^n$ such that
$\ugamma_{f}-2/s > 0$. 
Let
$(\bu, p)$ be solution to \eqref{eq:ns-P} with 
$\bbf\in \bcK^{\varpi, s}_{\ugamma_{f}-2}(P)\cap \bm{H}^{-1}(P)$.
Then there exists a weight vector $\ugamma > 1$ such that
\begin{equation*}
    \bu\in \bcK^{\varpi,2}_{\ugamma}(P), \qquad
    p\in \cK^{\varpi,2}_{\ugamma-1}(P).
  \end{equation*}
\end{corollary}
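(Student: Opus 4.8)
The plan is to derive the $L^2$-based weighted analytic regularity from the $L^s$-version already proved in Theorem \ref{th:analytic-P}, by first invoking that theorem with a suitably lowered Kondrat'ev weight and then applying the weighted Sobolev embedding of Lemma \ref{lemma:imbedding-sameorder} to pass from $L^s$ to $L^2$. The only point needing care is that the embedding does not spoil the countably-normed (analytic) bounds, i.e.\ that its constant is uniform in the differentiation order.

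First I would fix, componentwise, a weight vector $\ugamma_0\in\mathbb{R}^n$ with $\ugamma_0\leq\ugamma_{f}$ and
\begin{equation*}
  0 < \gamma_{0,i}-2/s < \min(\Lambda^i_{\min{}},1), \qquad i=1,\dots,n,
\end{equation*}
e.g.\ $\gamma_{0,i}-2/s = \min\bigl(\gamma_{f,i}-2/s,\ \tfrac12\min(\Lambda^i_{\min{}},1)\bigr)$, which is positive since $\ugamma_{f}-2/s>0$ and $\uLambda>0$. Then $\ugamma_0$ meets the hypothesis $\max(-\uLambda,-1)<\ugamma_0-2/s<\min(\uLambda,1)$ of Theorem \ref{th:analytic-P} (the lower bound holds trivially, being $>0$). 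Moreover, since $P$ is bounded and $\ugamma_0-2\leq\ugamma_{f}-2$, the factor $r^{\ugamma_{f}-\ugamma_0}=\prod_i r_i^{\gamma_{f,i}-\gamma_{0,i}}$ is bounded on $P$, so $\bbf\in\bcK^{\varpi,s}_{\ugamma_{f}-2}(P)$ implies $\bbf\in\bcK^{\varpi,s}_{\ugamma_0-2}(P)$ (the analytic bound $CA^\ell\ell!$ is only multiplied by a fixed, $\ell$-independent constant). Together with $\bbf\in\bm{H}^{-1}(P)$, Theorem \ref{th:analytic-P} yields
\begin{equation*}
  \bu\in\bcK^{\varpi,s}_{\ugamma_0}(P), \qquad p\in\cK^{\varpi,s}_{\ugamma_0-1}(P).
\end{equation*}

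Next I would choose $\ugamma$ with $1<\gamma_i<\gamma_{0,i}-2/s+1$ for all $i$, which is possible precisely because $\gamma_{0,i}-2/s>0$; with this choice $\gamma_i-2/2<\gamma_{0,i}-2/s$ and likewise $(\gamma_i-1)-2/2<(\gamma_{0,i}-1)-2/s$, so for every fixed $\ell$ Lemma \ref{lemma:imbedding-sameorder} (with $t=2<s$) gives $\cK^{\ell,s}_{\ugamma_0}(P)\hookrightarrow\cK^{\ell,2}_{\ugamma}(P)$ and $\cK^{\ell,s}_{\ugamma_0-1}(P)\hookrightarrow\cK^{\ell,2}_{\ugamma-1}(P)$. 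To see this preserves analyticity, I would apply H\"older's inequality to each summand, $\|r^{\alpham-\ugamma}\dalpha v\|_{L^2(P)}\leq\|r^{\ugamma_0-\ugamma}\|_{L^q(P)}\|r^{\alpham-\ugamma_0}\dalpha v\|_{L^s(P)}$ with $1/2=1/s+1/q$, where $\|r^{\ugamma_0-\ugamma}\|_{L^q(P)}<\infty$ exactly under the condition imposed on $\ugamma$; summing over $\alpham\leq\ell$ and using the discrete H\"older inequality then bounds the $\cK^{\ell,2}_{\ugamma}$-norm by the $\cK^{\ell,s}_{\ugamma_0}$-norm up to a factor polynomial in $\ell$ (from counting multi-indices), which is absorbed into $CA^\ell\ell!$. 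Hence $\bu\in\bcK^{\varpi,2}_{\ugamma}(P)$ and $p\in\cK^{\varpi,2}_{\ugamma-1}(P)$ with $\ugamma>1$, which is the assertion.

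The main obstacle — really the only step beyond bookkeeping — is verifying that the $L^s\to L^2$ weighted embedding is uniform in the order $\ell$ so that the analytic class is preserved; once the H\"older estimate above is recorded with the $\ell$-independent constant $\|r^{\ugamma_0-\ugamma}\|_{L^q(P)}$ and only a polynomial-in-$\ell$ combinatorial loss, the remainder is a direct application of Theorem \ref{th:analytic-P} and Lemma \ref{lemma:imbedding-sameorder}.
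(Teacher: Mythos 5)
Your proof is correct and follows essentially the same route as the paper's: apply Theorem \ref{th:analytic-P}, then pass from $L^s$ to $L^2$ via H\"older's inequality with the weight factor $r^{\ugamma_0-\ugamma}$ placed in $L^q$, $1/q=1/2-1/s$, choosing $\ugamma<\ugamma_0-2/s+1$ so that this factor is integrable with an $\alpha$-independent bound. You are in fact slightly more careful than the paper, which invokes Theorem \ref{th:analytic-P} directly with the weight $\ugamma_{f}$ without first lowering it to meet the upper bound $\ugamma-2/s<\min(\uLambda,1)$ required by that theorem; your intermediate weight $\ugamma_0$ closes that small gap.
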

\begin{proof}
From Theorem \ref{th:analytic-P}, 
$\bu\in \bcK^{\varpi,s}_{\ugamma_f}(P)$ 
and 
$p\in \cK^{\varpi,s}_{\ugamma_f-1}(P)$.
Then, let $1/q = 1/2 - 1/s$. 
By Hölder's inequality,
\begin{equation*}
\| r^{\alpham - \ugamma} \dalpha \bu \|_{L^2(P)} 
\leq 
C \| r^{\alpham - \ugamma_f} \dalpha \bu \|_{L^s(P)} \| r^{\ugamma_f-\ugamma}  \|_{L^q(P)} .
\end{equation*}
Fix any $\ugamma\in\mathbb{R}^n$ such that $\ugamma < \ugamma_f -2/s +1$. 
The second norm on the right hand side
of the above inequality is then bounded independently of $\alpham$. 
This implies $\bu\in \bcK^{\varpi,2}_{\ugamma}(P)$. 
Now, since $\ugamma_f - 2/s > 0$, 
we obtain the first part of the assertion.
The regularity of
the pressure follows by the same embedding argument.
\end{proof}
From Theorem \ref{th:analytic-P},
we can also derive an upper bound on the approximability of the set of solutions
to \eqref{eq:ns-P}. 
To this end, given a Hilbert space $X$, a compact subset $S$ of $X$, 
and $N\in \mathbb{N}$,
we recall the notion of \emph{Kolmogorov $N$-width of $S$ in $X$} 
which is defined as 
\begin{equation} \label{eq:nwidth}
d_N(S, X) =\inf_{\substack{X_N\subset X \\ \dim(X_N) = N}} 
\sup_{x\in S} \inf_{x_N\in X_N} \|x-x_N\|_X.
\end{equation}
In \eqref{eq:nwidth}, 
$X_N \subset X$ denotes any linear subspace of $X$ of finite dimension $N$.

The following Lemma bounds the $N$-widths of the 
weighted analytic function spaces.
  \begin{lemma}
   \label{lemma:nwidth-1} 
    Let $C_{\bu}, A_{\bu}, C_p,A_p>0$ and $\ugamma >  1$. 
    Then, there exist $C, b >0$ such that
  \begin{equation*}
  d_N(\cK^{\varpi,2}_\ugamma(P; C_{\bu}, A_{\bu}), H^1(P))  \leq C\exp\left(-b N^{1/3}\right)
  \end{equation*}
   and
  \begin{equation*}
  d_N(\cK^{\varpi,2}_{\ugamma-1}(P; C_p, A_p), L^2(P))  \leq C\exp\left(-b N^{1/3}\right).
  \end{equation*}
  \end{lemma}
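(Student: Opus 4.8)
The plan is to reduce the bound on Kolmogorov $N$-widths of the weighted analytic classes to a known approximation result for $hp$-finite elements on a geometrically refined mesh toward the corners of $P$. First I would fix a member $v$ of $\cK^{\varpi,2}_\ugamma(P; C_{\bu}, A_{\bu})$ and recall that the hypothesis $\ugamma > 1$ places these functions in $H^1(P)$ (indeed the weight exponent at level $\ell=1$ is $1-\gamma_i<0$, so the first-order derivative is only mildly singular and in fact $\cK^{1,2}_\ugamma(P)\hookrightarrow H^1(P)$ by Hardy's inequality). The key point is that the weighted analytic estimate $\|v\|_{\cK^{\ell,2}_\ugamma(P)}\le C_{\bu}A_{\bu}^\ell\ell!$ is precisely the regularity hypothesis under which exponential $hp$-approximation results have been established: on a mesh $\mathcal{T}_\sigma$ with $L$ layers of geometric refinement with ratio $\sigma\in(0,1)$ toward each corner $c_i$, and with elementwise polynomial degree growing linearly with the layer index (a so-called $\sigma$-geometric, linear-degree $hp$-space), the best approximation error in the $H^1(P)$-norm decays like $C\exp(-b'L)$, while $\dim X_N\sim L^3$ (there are $O(L)$ layers, each contributing $O(L)$ elements of degree $O(L)$, hence $O(L)\cdot O(L)\cdot O(L)=O(L^3)$ degrees of freedom). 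Setting $N = \dim X_N$ gives $L\sim N^{1/3}$ and therefore the claimed bound $d_N\le C\exp(-bN^{1/3})$.

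Concretely, the steps in order are: (i) recall or cite the $hp$-approximation theorem for weighted analytic classes in polygons — this is exactly the content of \cite{Schotzau2013a, Schotzau2013b} (see also \cite{Guo1986a, Guo1986b}), stated there for the Kondrat'ev-type analytic classes $\cK^{\varpi,2}_\gamma$ with the homogeneous norm used here; (ii) observe that the constants $b'$ and the implied constants depend only on $\sigma$, on the geometry of $P$, and on $A_{\bu}$ (through the analyticity radius), not on the particular $v$, so the estimate is uniform over the class; (iii) take $X_N$ to be the $hp$-finite element space on $\mathcal{T}_\sigma$ with the linear degree vector truncated at level $L$, note $\dim X_N \le c L^3$, and choose $L = \lceil (N/c)^{1/3}\rceil$; (iv) combine (i) and (iii) and absorb constants to obtain $d_N(\cK^{\varpi,2}_\ugamma(P;C_{\bu},A_{\bu}), H^1(P)) \le C\exp(-bN^{1/3})$. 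The pressure statement is entirely analogous: the class $\cK^{\varpi,2}_{\ugamma-1}(P;C_p,A_p)$ sits in $L^2(P)$ (here $\ugamma-1>0$ so the zeroth-order weight is a nonnegative power of $r_i$, hence harmless), and the same geometric $hp$-mesh yields $L^2$-best-approximation error $C\exp(-b'L)$ with the identical dimension count, so the same choice $L\sim N^{1/3}$ closes the argument.

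The main obstacle — really the only nontrivial point — is making sure the cited $hp$-approximation result is stated for exactly the function class appearing here, namely the $L^2$-based homogeneous Kondrat'ev analytic class with the specific weight range, rather than a superficially different variant (nonhomogeneous norms, or $H^k$-based analytic classes, or classes with the opposite sign convention on $\gamma$). If a verbatim citation is not available, one would instead give a short self-contained argument: localize with a corner partition of unity, map each corner sector to the reference sector, apply the dyadic decomposition of Lemma~\ref{lemma:thetaprod}-type together with a scaled polynomial approximation on the reference annulus whose degree grows with the dyadic level, and sum the geometrically decaying contributions; the analytic bound $C_{\bu}A_{\bu}^\ell\ell!$ is exactly what makes the per-level error $\sim (A_{\bu}\sigma^{?})^{p}$ summable once the degree $p$ on the $j$-th layer is taken proportional to $L-j$. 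Everything else — the dimension bookkeeping $N\sim L^3$ and the passage from $\exp(-bL)$ to $\exp(-bN^{1/3})$ — is routine.
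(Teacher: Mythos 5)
Your strategy coincides with the paper's: bound the $N$-width by the best approximation error of an exponentially convergent $hp$-finite element family and use the dimension count $N\sim L^3$ to convert $\exp(-b'L)$ into $\exp(-bN^{1/3})$. However, the one step you explicitly defer -- ``making sure the cited $hp$-approximation result is stated for exactly the function class appearing here'' -- is precisely the only substantive content of the paper's proof, and you leave it unresolved. The paper does not claim that the $hp$-results apply verbatim to the homogeneous classes $\cK^{\varpi,2}_\ugamma(P;C,A)$; instead it proves an embedding into the \emph{non-homogeneous} countably normed spaces, namely $\cK^{\varpi,2}_{\ugamma}(P;C,A)\subset B^2_{2-\ugamma}(P;\tC,\tA)$ and $\cK^{\varpi,2}_{\ugamma-1}(P;C,A)\subset B^1_{2-\ugamma}(P;\tC,\tA)$, and then invokes the mixed Stokes approximation results of Schwab--Suri and Sch\"otzau--Wihler, which are stated for $B^2_{\ubeta}(P)^2\times B^1_{\ubeta}(P)$ with $\ubeta\in(0,1)$. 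This embedding is where the hypothesis $\ugamma>1$ actually enters: it makes $2-\ugamma<1$ (so the cited theorems apply) and makes the weights $r^{\ugamma}$, $r^{\ugamma-1}$ bounded on $P$, giving $\|v\|_{H^1(P)}\leq \max\bigl(\|r^{\ugamma}\|_{L^\infty(P)},\|r^{\ugamma-1}\|_{L^\infty(P)}\bigr)\|v\|_{\cK^{1,2}_\ugamma(P)}$ with no need for the Hardy inequality you invoke. The remaining point of the embedding is the index shift from the bound $CA^{\alpham}\alpham!$ to the required $\tC\tA^{\alpham-2}(\alpham-2)!$, handled by taking any $\tA>A$ and $\tC\gtrsim\max_{k\geq 2}A^k\tA^{2-k}k(k-1)$. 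Your fallback sketch (dyadic decomposition plus layerwise polynomial approximation) would also work but is a much longer route; as written, your argument has a genuine gap at step (i), since the results you cite (\cite{Schotzau2013a,Schotzau2013b}) are not stated for the homogeneous $L^2$-based Kondrat'ev analytic classes of this paper, and the bridge between the two scales is exactly what needs to be, and in the paper is, proved.
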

  \begin{proof}
    We recall the definition of the countably normed spaces $B^\ell_\beta(P)$ as, e.g., in
    \cite{Guo2006a}. For $\ell\in \mathbb{N}$, $\ubeta \in \mathbb{R}^n$, and
    $c,d>0$, a
    function $v$ belongs to $B^\ell_\ubeta(P; c, d)$ if $v\in H^{\ell-1}(P)$ 
    and, for all $\alpham \geq \ell$
    \begin{equation*}
     \| r^{\alpham + \ubeta - \ell}  \dalpha v \|_{L^2(P)} \leq c d^{\alpham - \ell}(\alpham - \ell)!.
    \end{equation*}
    The spaces $B^\ell_\beta(P)$ correspond to non-homogeneous versions of the
    spaces $\cK^\varpi_\gamma(P)$, see \cite{Costabel2010a}, 
    for a thorough analysis of their relationship.

    Let $\ugamma>1$: we show that for all $\tA>A$, there exists $\tC$ such that
    $\cK^\varpi_{\ugamma}(P; C, A) \subset
    B^2_{2-\ugamma}(P; \tC, \tA)$. If $v\in\cK^\varpi_\ugamma(P; C, A)$, one clearly has
    \begin{equation} \label{eq:BK1}
    \| v \|_{H^1(P)} \leq 
    \max(\|r^\ugamma\|_{L^\infty(P)},  \|r^{\ugamma-1}\|_{L^\infty(P)})\|v\|_{\cK^1_\ugamma(P)},
    \end{equation}
    hence $v\in H^1(P)$. 
    Furthermore, by definition, there exist $C, A>0$ such that
    \begin{equation} \label{eq:BK2}
     \| r^{\alpham + (2-\ugamma)-2}  \dalpha v \|_{L^2(P)} 
     \leq 
     C A^{\alpham}\alpham! = CA^2 \alpham (\alpham-1) A^{\alpham-2}(\alpham-2)!.
    \end{equation}
    For any $\tA>A$ one can choose $\tC \geq \max_{k\geq 2}
    A^k\tA^{2-k}k(k-1)$ and from \eqref{eq:BK1} and \eqref{eq:BK2},  $v\in
    B^2_{2-\ugamma}(P; \tC, \tA)$.
    The inclusion $\cK^\varpi_{\ugamma-1}(P; C, A)\subset B^1_{2-\ugamma}(P;
    \tC, \tA)$ can be
    shown in the same fashion.

    The application of \cite[Theorem 3.1]{CSMSuri99}, resp. of \cite{SchoetWihlNM2003},
    which contain the construction of an exponentially convergent, 
    $H^1_0(P)^2\times L^2_0(P)$-conforming approximation of functions in 
    $B^2_{\ubeta}(P;\tC, \tA)^2\times B^1_{\ubeta}(P; \tC, \tA)$ for arbitrary
    $\ubeta\in (0,1)$, concludes the proof.
  \end{proof}
    We denote by $\cS$ the solution operator of \eqref{eq:ns-P}, i.e., we write
  $(\bu, p) = \cS(\bbf)$ if $(\bu, p)$ are the solutions to \eqref{eq:ns-P} with
  right hand side $\bbf$.

As a direct consequence of Corollary \ref{corollary:analytic-L2} and Lemma
  \ref{lemma:nwidth-1}, a second corollary to Theorem
  \ref{th:analytic-P} gives bounds on the
  $N$-widths of the sets of solutions to \eqref{eq:ns-P}.
  \begin{corollary}
    \label{corollary:nwidth}
    Let $2 < s<\infty$, $\ugamma_{f} \in \mathbb{R}^n$ such that
  $\ugamma_{f}-2/s > 0$, and $C_f, A_f>0$. 
  Let $(\bcU, \cP)$ be the sets of, respectively, 
  all velocities and pressures, solutions 
  of the Navier-Stokes equations \eqref{eq:ns-P} with right hand sides in 
  $\bcK^{\varpi,s}_{\ugamma_{f}-2}(P; C_f, A_f)\cap \bm{H}^{-1}(P)$, 
  i.e.,
  \begin{equation*}
    (\bcU, \cP) = \cS\left(\bcK^{\varpi, s}_{\ugamma_{f}-2}(P; C_f, A_f)\cap \bm{H}^{-1}(P)\right)
    \subset H^1_0(P)^2\times L^2_0(P).
  \end{equation*}
  Then, 
  $(\bcU, \cP)$ is a compact subset of  $H^1_0(P)^2\times L^2_0(P)$.
  Moreover, 
  there exist $C, b>0$  such that for all $N\in\mathbb{N}$ 
  \begin{equation*}
  d_N(\bcU \times \cP,  
  \left( H^1_0(P) \right)^2\times L_0^2(P))\leq C\exp\left(-bN^{1/3}\right).
  \end{equation*}
  \end{corollary}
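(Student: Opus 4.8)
The plan is to deduce the statement from Corollary~\ref{corollary:analytic-L2} and Lemma~\ref{lemma:nwidth-1}; the one point needing attention is that the constants in the weighted analytic estimates can be chosen \emph{uniformly} over all admissible right hand sides.

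First I would verify that the proof of Theorem~\ref{th:analytic-P}, through Lemmas~\ref{lemma:base-reg}, \ref{lemma:induction-step} and \ref{lemma:nonlin-est}, produces constants $C_{\bu},A_{\bu},C_p,A_p$ in \eqref{eq:induction-P} that depend only on $s$, $\ugamma$, the geometry of $P$ (through $\uLambda$ and $R_P$) and the pair $(C_f,A_f)$ bounding $\bbf$, but not on the individual solution $(\bu,p)$: the base estimate of Lemma~\ref{lemma:base-reg} bounds $\|\bu\|_{\bcK^{1,s}_{\ugamma}(P)}+\|p\|_{\cK^{0,s}_{\ugamma-1}(P)}$ by a fixed multiple of the data norm (using the standing small-data bound on $\|\bu\|_{\bm{H}^1_0(P)}$), and the inductive step fixes $A_{\bu}=A_p$ and $C_{\bu}=C_p$ in terms of $A_f$ and the structural constant appearing in \eqref{eq:end-inductive-step}. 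Hence a single quadruple $(C_{\bu},A_{\bu},C_p,A_p)$ works for every $\bbf\in\bcK^{\varpi,s}_{\ugamma_{f}-2}(P;C_f,A_f)\cap\bm{H}^{-1}(P)$, and running the H\"older argument of Corollary~\ref{corollary:analytic-L2} then yields $\ugamma>1$ and $C'_{\bu},A'_{\bu},C'_p,A'_p>0$ with $\bcU\subset\bcK^{\varpi,2}_{\ugamma}(P;C'_{\bu},A'_{\bu})$ and $\cP\subset\cK^{\varpi,2}_{\ugamma-1}(P;C'_p,A'_p)$.

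The exponential $N$-width bound is then immediate: using the product structure of the Kolmogorov width (split an approximating subspace into a velocity component and a pressure component) one gets $d_N(\bcU\times\cP,(H^1_0(P))^2\times L^2_0(P))\le d_{\lceil N/2\rceil}(\bcU,(H^1_0(P))^2)+d_{\lfloor N/2\rfloor}(\cP,L^2_0(P))$, and Lemma~\ref{lemma:nwidth-1} bounds each summand by $C\exp(-b(N/2)^{1/3})$, which is of the claimed form after adjusting $C$ and $b$. For compactness, $\bcU\times\cP$ is bounded in $\bcK^{2,2}_{\ugamma}(P)^2\times\cK^{1,2}_{\ugamma-1}(P)$ since the analytic bounds hold to every order, hence relatively compact in $(H^1_0(P))^2\times L^2_0(P)$ by a weighted Rellich--Kondrachov embedding together with the continuous inclusions $\bcK^{1,2}_{\ugamma}(P)\hookrightarrow H^1(P)$ and $\cK^{1,2}_{\ugamma-1}(P)\hookrightarrow L^2(P)$, valid because $\ugamma>1$; and it is closed, being the image of the closed source set under the (locally Lipschitz) continuous solution operator $\cS$. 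The only genuine work is the bookkeeping of the second paragraph --- checking that no dependence on the particular solution enters $C_{\bu},A_{\bu},C_p,A_p$ --- while everything else is routine.
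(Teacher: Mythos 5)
Your route is the one the paper intends: Corollary~\ref{corollary:nwidth} is presented there as a direct consequence of Corollary~\ref{corollary:analytic-L2} and Lemma~\ref{lemma:nwidth-1}, and your main addition --- verifying that the constants $(C_{\bu},A_{\bu},C_p,A_p)$ produced by Lemmas~\ref{lemma:base-reg} and~\ref{lemma:induction-step} depend only on $(C_f,A_f)$, $s$, $\ugamma$, and the geometry of $P$, so that a \emph{single} pair of classes $\bcK^{\varpi,2}_{\ugamma}(P;C'_{\bu},A'_{\bu})$, $\cK^{\varpi,2}_{\ugamma-1}(P;C'_p,A'_p)$ contains all of $\bcU$ and $\cP$ --- is precisely the uniformity that the paper leaves implicit and that the $N$-width bound genuinely requires (note that the uniform bound at the base level also uses the standing smallness assumption, which caps $\|\bbf\|_{\bm{H}^{-1}(P)}$ and hence $\|\bu\|_{\bm{H}^1_0(P)}$ over the whole family). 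Your splitting $d_N(\bcU\times\cP)\le d_{\lceil N/2\rceil}(\bcU)+d_{\lfloor N/2\rfloor}(\cP)$ is fine; the paper instead obtains the joint velocity--pressure approximation directly from the mixed construction in \cite{CSMSuri99}.

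The one step that does not stand as written is the closedness of $(\bcU,\cP)$: the continuous image of a \emph{closed} set under a continuous map need not be closed, so ``image of the closed source set under the continuous solution operator'' proves nothing. The repair is to use compactness of the source rather than closedness: the set $\bcK^{\varpi,s}_{\ugamma_f-2}(P;C_f,A_f)\cap\bm{H}^{-1}(P)$ is totally bounded in $\bcK^{0,s}_{\ugamma_f-2}(P)$ (bounded with equi-bounded derivatives of every order) and closed there, since the defining inequalities $\|\cdot\|_{\cK^{\ell,s}_{\ugamma_f-2}(P)}\le C_fA_f^\ell\,\ell!$ pass to limits; it is therefore compact in a topology in which $\cS$ is continuous, and the continuous image of a compact set is compact, hence closed. (Total boundedness of $\bcU\times\cP$ itself you get for free from the $N$-width decay together with boundedness, so only the closedness point needs this compact-source argument.) With that substitution your proof is complete and matches the paper's intended argument.
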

  \begin{remark}
   By embedding in weighted spaces and rescaling of the constants, Corollary \ref{corollary:nwidth} 
   holds under the more usual condition that right hand sides of \eqref{eq:ns-P}
   belong to $\bcK^{\varpi,2}_{\ugamma_f-2}(P, C_f, A_f)$, with $\ugamma_f > 1$.
  \end{remark}

\section{Conclusion}
\label{sec:Concl}
We established analytic regularity of stationary solutions of the incompressible Navier-Stokes
equation in plane, polygonal domains, subject to the ``no-slip'', homogeneous Dirichlet
boundary condition. 

The analytic regularity result pertains to 
Leray-Hopf solutions of the viscous, incompressible Navier-Stokes equations 
\eqref{eq:NSEIntr} with sufficiently small data.
We also remark that the variational bootstrapping argument used to establish \eqref{eq:an-P0} 
can also be used to establish analytic regularity for the solution of the 
Navier-Stokes equation in Sobolev spaces without weights, in compact subsets of $P$, 
and up to the boundary in domains with analytic boundary, thereby comprising a ``real-valued''
proof of the analytic regularity results in \cite{Masuda1967a,Giga1983} which were obtained
in these references with function-theoretic arguments.
The proof in the present paper proceeds, instead, by the classical arguments \cite{MorrNiren57}
which are based on localization, a-priori estimates and bootstrapping arguments combined
with induction to control growth of constants in terms of derivative orders. 

The present proof is developed for so-called ``no-slip'' boundary conditions 
(i.e. homogeneous Dirichlet boundary conditions) 
for the velocity field which entailed weighted norms with homogeneous corner weights.
They can be extended verbatim to certain other combinations of boundary conditions at the corners. 
Corresponding weighted shift theorems in \emph{finite order} weighted spaces 
were obtained in \cite{OrltSndig95}. 

The presently obtained analytic regularity shifts for the (Navier-)Stokes equations will
allow for \emph{exponential convergence rates} of suitable high-order and
  model order reduction 
discretizations of these equations. 
See, e.g., \cite{CckbGKanDoMiScho05,CSMSuri99,Kazeev2018,NSVEM18}.
The detailed \emph{a priori} analysis and exponential convergence of a mixed $hp$ discontinuous
  Galerkin discretization of the Navier-Stokes equations in polygons
  \eqref{eq:NSEIntr} are developed in \cite{Schotzau2020}.

\bibliographystyle{siamplain}

\begin{thebibliography}{10}

\bibitem{Adams2003}
{\sc R.~A. Adams and J.~J.~F. Fournier}, {\em Sobolev spaces}, vol.~140 of Pure
  and Applied Mathematics, Elsevier/Academic Press, Amsterdam, second~ed.,
  2003.

\bibitem{Babuska1988}
{\sc I.~Babu\v{s}ka and B.~Q. Guo}, {\em The {$h$}-{$p$} version of the finite
  element method for domains with curved boundaries}, SIAM J. Numer. Anal., 25
  (1988), pp.~837--861.

\bibitem{NSVEM18}
{\sc L.~Beirão~da Veiga, C.~Lovadina, and G.~Vacca}, {\em Virtual elements for
  the {N}avier-{S}tokes problem on polygonal meshes}, SIAM J. Numer. Anal., 56
  (2018), pp.~1210--1242.

\bibitem{BDC85}
{\sc P.~Bolley, M.~Dauge, and J.~Camus}, {\em R\'{e}gularit\'{e} {G}evrey pour
  le probl\`eme de {D}irichlet dans des domaines \`a singularit\'{e}s
  coniques}, Comm. Partial Differential Equations, 10 (1985), pp.~391--431.

\bibitem{CckbGKanDoMiScho05}
{\sc B.~Cockburn, G.~Kanschat, and D.~Sch{\"{o}}tzau}, {\em A locally
  conservative {LDG} method for the incompressible {N}avier-{S}tokes
  equations}, Math. Comp., 74 (2005), pp.~1067--1095.

\bibitem{Costabel2010b}
{\sc M.~Costabel, M.~Dauge, and S.~Nicaise}, {\em {Corner singularities and
  analytic regularity for linear elliptic systems}}, 2010.
\newblock Book in preparation.

\bibitem{Costabel2010a}
{\sc M.~Costabel, M.~Dauge, and S.~Nicaise}, {\em Mellin analysis of weighted
  {S}obolev spaces with nonhomogeneous norms on cones}, in Around the research
  of {V}ladimir {M}az'ya. {I}, vol.~11 of Int. Math. Ser. (N. Y.), Springer,
  New York, 2010, pp.~105--136.

\bibitem{Costabel2012a}
{\sc M.~Costabel, M.~Dauge, and S.~Nicaise}, {\em {Analytic Regularity for
  Linear Elliptic Systems in Polygons and Polyhedra}}, Math. Models Methods
  Appl. Sci., 22 (2012), pp.~1250015, 63.

\bibitem{Costabel2014}
{\sc M.~Costabel, M.~Dauge, and S.~Nicaise}, {\em {Weighted analytic regularity
  in polyhedra}}, Comput. Math. Appl., 67 (2014), pp.~807--817.

\bibitem{Costabel2005}
{\sc M.~Costabel, M.~Dauge, and C.~Schwab}, {\em {Exponential convergence of
  hp-FEM for Maxwell equations with weighted regularization in polygonal
  domains}}, Math. Models Methods Appl. Sci., 15 (2005), pp.~575--622.

\bibitem{DallAcqua2012}
{\sc A.~Dall'Acqua, S.~Fournais, T.~\O{}stergaard~Sørensen, and
  E.~Stockmeyer}, {\em {Real analyticity away from the nucleus of
  pseudorelativistic Hartree–Fock orbitals}}, Anal. PDE, 5 (2012),
  pp.~657--691.

\bibitem{Ebmeyer2001}
{\sc C.~Ebmeyer and J.~Frehse}, {\em Steady {N}avier-{S}tokes equations with
  mixed boundary value conditions in three-dimensional {L}ipschitzian domains},
  Math. Ann., 319 (2001), pp.~349--381.

\bibitem{Eckhardt2018}
{\sc F.~Eckhardt, P.~A. Cioica-Licht, and S.~Dahlke}, {\em Besov regularity for
  the stationary {N}avier-{S}tokes equation on bounded {L}ipschitz domains},
  Appl. Anal., 97 (2018), pp.~466--485.

\bibitem{Fick2018}
{\sc L.~Fick, Y.~Maday, A.~T. Patera, and T.~Taddei}, {\em A stabilized {POD}
  model for turbulent flows over a range of {R}eynolds numbers: optimal
  parameter sampling and constrained projection}, J. Comput. Phys., 371 (2018),
  pp.~214--243.

\bibitem{Giga1983}
{\sc Y.~Giga}, {\em {Time and spatial analyticity of solutions of the
  Navier-Stokes equations}}, Comm. Partial Differential Equations, 8 (1983),
  pp.~929--948.

\bibitem{Girault1986a}
{\sc V.~Girault and P.-A. Raviart}, {\em {Finite Element Methods for
  Navier-Stokes Equations}}, vol.~5 of Springer Series in Computational
  Mathematics, Springer-Verlag, Berlin, 1986.

\bibitem{Grisvard1985}
{\sc P.~Grisvard}, {\em Elliptic problems in nonsmooth domains}, vol.~24 of
  Monographs and Studies in Mathematics, Pitman (Advanced Publishing Program),
  Boston, MA, 1985.

\bibitem{Guo2006}
{\sc B.~Guo and I.~Babu\v{s}ka}, {\em Approximation theory for the
  {$p$}-version of the finite element method in three dimensions. {I}.
  {A}pproximabilities of singular functions in the framework of the
  {J}acobi-weighted {B}esov and {S}obolev spaces}, SIAM J. Numer. Anal., 44
  (2006), pp.~246--269.

\bibitem{Guo1986a}
{\sc B.~Guo and I.~Babuška}, {\em {The h-p version of the finite element
  method - Part 1: The basic approximation results}}, Comput. Mech., 1 (1986),
  pp.~21--41.

\bibitem{Guo1986b}
{\sc B.~Guo and I.~Babuška}, {\em {The h-p version of the finite element
  method - Part 2: General results and applications}}, Comput. Mech., 1 (1986),
  pp.~203--220.

\bibitem{Guo2006a}
{\sc B.~Guo and C.~Schwab}, {\em {Analytic regularity of Stokes flow on
  polygonal domains in countably weighted Sobolev spaces}}, J. Comput. Appl.
  Math., 190 (2006), pp.~487--519.

\bibitem{Hesthaven2016}
{\sc J.~S. Hesthaven, G.~Rozza, and B.~Stamm}, {\em Certified reduced basis
  methods for parametrized partial differential equations}, SpringerBriefs in
  Mathematics, Springer, Cham; BCAM Basque Center for Applied Mathematics,
  Bilbao, 2016.

\bibitem{Kato1996}
{\sc K.~Kato}, {\em {New idea for proof of analyticity of solutions to analytic
  nonlinear elliptic equations}}, SUT J. Math., 32 (1996), pp.~157--161.

\bibitem{Kazeev2018}
{\sc V.~Kazeev and C.~Schwab}, {\em Quantized tensor-structured finite elements
  for second-order elliptic {PDE}s in two dimensions}, Numer. Math., 138
  (2018), pp.~133--190.

\bibitem{Kondratev1967}
{\sc V.~A. Kondrat'ev}, {\em Boundary value problems for elliptic equations in
  domains with conical or angular points}, Trudy Moskov. Mat. Ob\v{s}\v{c}., 16
  (1967), pp.~209--292.

\bibitem{Kozlov1997}
{\sc V.~A. Kozlov, V.~G. Maz'ya, and J.~Rossmann}, {\em Elliptic boundary value
  problems in domains with point singularities}, vol.~52 of Mathematical
  Surveys and Monographs, American Mathematical Society, Providence, Rhode
  Island, 1997.

\bibitem{Kozlov2001}
{\sc V.~A. Kozlov, V.~G. Maz'ya, and J.~Rossmann}, {\em {Spectral problems
  associated with corner singularities of solutions to elliptic equations}},
  vol.~85 of Mathematical Surveys and Monographs, American Mathematical
  Society, Providence, Rhode Island, 2001.

\bibitem{Ladyzhenskaya1969}
{\sc O.~A. Ladyzhenskaya}, {\em {The Mathematical Theory of Viscous
  Incompressible Flow}}, Gordon and Breach, New York, 2d english~ed., 1969.

\bibitem{Lassila2014}
{\sc T.~Lassila, A.~Manzoni, A.~Quarteroni, and G.~Rozza}, {\em Model order
  reduction in fluid dynamics: challenges and perspectives}, in Reduced order
  methods for modeling and computational reduction, vol.~9 of MS\&A. Model.
  Simul. Appl., Springer, Cham, 2014, pp.~235--273.

\bibitem{LionsJL69}
{\sc J.-L. Lions}, {\em Quelques m\'{e}thodes de r\'{e}solution des probl\`emes
  aux limites non lin\'{e}aires}, Dunod; Gauthier-Villars, Paris, 1969.

\bibitem{nonlinear}
{\sc Y.~Maday and C.~Marcati}, {\em Analyticity and hp discontinuous {G}alerkin
  approximation of nonlinear {S}chr\"odinger eigenproblems}, Tech. Report
  2019-69, Seminar for Applied Mathematics, ETH Z{\"u}rich, Switzerland, 2019.

\bibitem{Masuda1967a}
{\sc K.~Masuda}, {\em {On the analyticity and the unique continuation theorem
  for solutions of the Navier-Stokes equation}}, Proc. Japan Acad., 43 (1967),
  pp.~827--832.

\bibitem{Mazya2010}
{\sc V.~Maz'ya and J.~Rossmann}, {\em {Elliptic Equations in Polyhedral
  Domains}}, vol.~162 of Mathematical Surveys and Monographs, American
  Mathematical Society, Providence, Rhode Island, 2010.

\bibitem{MorrNiren57}
{\sc C.~B. Morrey, Jr. and L.~Nirenberg}, {\em On the analyticity of the
  solutions of linear elliptic systems of partial differential equations},
  Comm. Pure Appl. Math., 10 (1957), pp.~271--290.

\bibitem{Nicaise1997}
{\sc S.~Nicaise}, {\em {Regularity of the solutions of elliptic systems in
  polyhedral domains}}, Bull. Belg. Math. Soc. Simon Stevin, 4 (1997),
  pp.~411--429.

\bibitem{OrltSndig95}
{\sc M.~Orlt and A.-M. S\"{a}ndig}, {\em Regularity of viscous
  {N}avier-{S}tokes flows in nonsmooth domains}, in Boundary value problems and
  integral equations in nonsmooth domains ({L}uminy, 1993), vol.~167 of Lecture
  Notes in Pure and Appl. Math., Dekker, New York, 1995, pp.~185--201.

\bibitem{Quarteroni2016}
{\sc A.~Quarteroni, A.~Manzoni, and F.~Negri}, {\em Reduced basis methods for
  partial differential equations}, vol.~92 of Unitext, Springer, Cham, 2016.

\bibitem{Schotzau2020}
{\sc D.~Sch\"otzau, C.~Marcati, and C.~Schwab}, {\em Exponential convergence of
  mixed hp-{DGFEM} for the incompressible {N}avier-{S}tokes equations in
  $\mathbb{R}^2$}, Tech. Report 2020-15, Seminar for Applied Mathematics, ETH
  Z{\"u}rich, Switzerland, 2020.

\bibitem{Schotzau2013a}
{\sc D.~Sch\"{o}tzau, C.~Schwab, and T.~P. Wihler}, {\em {$hp$-dGFEM for
  Second-Order Elliptic Problems in Polyhedra I: Stability on Geometric
  Meshes}}, SIAM J. Numer. Anal., 51 (2013), pp.~1610--1633.

\bibitem{Schotzau2013b}
{\sc D.~Sch\"{o}tzau, C.~Schwab, and T.~P. Wihler}, {\em {$hp$-dGFEM for second
  order elliptic problems in polyhedra. II: Exponential convergence}}, SIAM J.
  Numer. Anal., 51 (2013), pp.~2005--2035.

\bibitem{SchoetWihlNM2003}
{\sc D.~Sch\"{o}tzau and T.~P. Wihler}, {\em Exponential convergence of mixed
  hp-{DGFEM} for {S}tokes flow in polygons}, Numer. Math., 96 (2003),
  pp.~339--361.

\bibitem{CSMSuri99}
{\sc C.~Schwab and M.~Suri}, {\em Mixed {$hp$} finite element methods for
  {S}tokes and non-{N}ewtonian flow}, Comput. Methods Appl. Mech. Engrg., 175
  (1999), pp.~217--241.

\bibitem{Temam1995}
{\sc R.~Temam}, {\em Navier-{S}tokes equations and nonlinear functional
  analysis}, vol.~66 of CBMS-NSF Regional Conference Series in Applied
  Mathematics, Society for Industrial and Applied Mathematics (SIAM),
  Philadelphia, PA, second~ed., 1995.

\bibitem{Veroy2005}
{\sc K.~Veroy and A.~T. Patera}, {\em Certified real-time solution of the
  parametrized steady incompressible {N}avier-{S}tokes equations: rigorous
  reduced-basis a posteriori error bounds}, Internat. J. Numer. Methods Fluids,
  47 (2005), pp.~773--788.

\end{thebibliography}

\end{document}